\newtheorem{theorem}{Theorem}  
\numberwithin{theorem}{section}               
\newtheorem{lemma}[theorem]{Lemma}               
\newtheorem{corollary}[theorem]{Corollary}
\newtheorem{proposition}[theorem]{Proposition}
\theoremstyle{definition}
\newtheorem{definition}[theorem]{Definition}
\newtheorem{example}[theorem]{Example}
\newtheorem{remark}[theorem]{Remark}
\newtheorem{assumption}[theorem]{Assumption}
\renewcommand{\c}{\operatorname{cap}}
\newcommand{\G}{{G}}
\newcommand{\N}{\mathcal{N}}
\newcommand{\F}{\mathcal{F}}
\newcommand{\K}{\mathbb{K}}
\newcommand{\Kf}{\mathbb K_{\precsim 1}}
\newcommand{\map}{\varrho}
\newcommand{\prob}{\pi}
\newcommand{\Prob}{\Pi}
\newcommand{\rw}{\beta}
\newcommand{\pp}{\prec\hspace{-2pt}\prec}
\renewcommand{\ss}{\succ\hspace{-2pt}\succ}
\newcommand{\subsV}{U}
\author[M. Keller, A. Muranova]{Matthias Keller, Anna Muranova}
\address{Matthias Keller: Institut f\"ur Mathematik, Universit\"at Potsdam
	14476  Potsdam, Germany}
\email{matthias.keller@uni-potsdam.de}
 \address{Anna Muranova: Faculty of Mathematics and Computer Science, University of Warmia and Mazury in Olsztyn, ul. Sloneczna 54, 10-710 Olsztyn, Poland} 
 \email{anna.muranova@matman.uwm.edu.pl}
\title[]{Recurrence and transience for non-Archimedean and directed graphs}
\thanks{}
\begin{document}

\maketitle

\begin{abstract}
We introduce the notion of recurrence and transience for graphs over non-Archimedean ordered field. To do so we relate these graphs to random walks of directed graphs over the reals. In particular, we give a characterization of the real directed graphs which can arise is such a way. As a main result, we give characterization for recurrence and transience in terms of a quantity related to the capacity. 
\end{abstract}

 \footnotetext[1]{The first author acknowledges the financial support of the DFG.  	The second author acknowledges financial support by the NCN (National Science Center, Poland), grant nr. 2022/06/X/ST1/00910.}     

{\footnotesize
{\bf Keywords:} {weighted graph, Markov chain, capacity, recurrence, transience, non-Archimedean field, ordered field, random walk, directed graph} 
\smallskip

{\bf Mathematics Subject Classification 2020: }{60J10, 05C22, 31C20, 47S10, 05C50, 12J15,  	46N3, 47N30}

%\tableofcontents
\section{Introduction}

The study of random walks on graphs is a classical topic going back to the last century. The name was coined by  Karl Pearson in 1905 \cite{Pearson} to  model a mosquito infestation in a forest. Since then this has become a vibrant field of mathematics connecting probability, potential theory, graph and geometric group theory, see e.g. the monographs \cite{LyonsPeres,Woess00,Woess09} and references therein as well as \cite{BosiHuPeres,Cenac,Kumamoto} for recent developments. 

In this work we take on the study of random walks arising from graphs over non-Archimedean fields. Such graphs have been recently introduced and studied in \cite{Muranova1,Muranova2,OurPreprint}. In particular, in \cite{OurPreprint} the notion of capacity on infinite graphs was studied in great depth. Classically the notion of capacity is closely related to questions of the behavior of random walks. Indeed vanishing capacity at a vertex is equivalent to fact that the random walk returns to this vertex over time infinitely often almost surely -- a property which is known as recurrence. Conversely, if the capacity is positive the random walk only returns finitely many times almost surely which is known as transience.

However, for non-Archimedean graphs the situation is significantly more subtle. First of all, while the sequence of capacities along an exhaustion of an infinite graph is still monotone decreasing its limit does not necessarily exist. So, the classical dichotomy of zero or positive capacity is extended by the case of divergent capacity. Secondly, convergence to zero is much harder to realize in non-Archimedean fields as the sequence is not only required to be eventually smaller than any positive rational but also than any infinitesimal. On the other hand, in terms of non-Archimedean probability   a property is said to hold almost surely if it holds outside of an event  which has probability less than every positive rational \cite[p.~25]{Nelson}. Thus, expecting to equate zero capacity with recurrence would be also conceptually problematic.

Thus, we have to take heed in two ways. First of all, we define random walks and the notions of recurrence and transience in accordance to the proper viewpoint of probability on non-Archimedean fields. This leads us to relate our non-Archimedean graphs to certain directed graphs over the reals. For such graphs, there is a rich theory of their corresponding random walks, see e.g. \cite{Woess09}, which we will exploit. However, it is crucial to align the proper notions in the non-Archimedean and the corresponding directed real setting. In particular, this relates to paths in the non-Archimedean graph with  directed paths and consequently with (strongly) connected components.  For directed graphs the study of connected components is more subtle and a specific importance comes to so called essential components. These are also called absorbing components and are characterized by the fact that they do not have paths leaving this component.
We use these notions to obtain a first result on determining whether a vertex is recurrent or transient. This result, Theorem~\ref{thm:rectranscomponent}, says that vertices in non-essential components  are necessarily transient. This basically leaves us with the study of the type of the vertex in the essential components of the graph. Furthermore,  we characterize the directed graphs over the reals which arise from non-Archimedean graphs in Theorem~\ref{thm:infinitevsdirected}.

Secondly, we consider a quantity which relates the capacity to recurrence and transience. For real graphs the diagonal of the Green's function is equal to the reciprocal of the capacity normalized by the measure. As discussed above neither of these quantities necessarily exists for infinite non-Archimedean graphs. However, as the normalized capacities  of a vertex $ a $ along an exhaustion are bounded by values less than one, we can take their real part -- that is the unique real number in the field which differs only by an infinitesimal. This gives not only a real number between zero and one but  also  the limit exists within the reals. Hence, also its reciprocal called $ \G(a) $ exists and is finite or positive infinite. For vertices in the essential components this allows us to characterize recurrence and transience in terms of infiniteness or finiteness of $ \G(a) $,  Theorem~\ref{thm:charGesscomp}. See also Corollary~\ref{cor:charGesscomp} which brings this together with the findings of Theorem~\ref{thm:rectranscomponent} and therefore provides a full characterization of recurrence and transience. Moreover, for vertices in any non-essential component,  one direction of this characterization remains true, Corollary~\ref{cor::rec}, namely that $ \G(a) <\infty$ implies transience of $ a $. Of course, this raises the question whether a full characterization of recurrence and transience is possible in terms of $ \G(a) $. That this is not the case  is shown in Examples~\ref{ex1} and Example~\ref{ex2}.

The paper is structured as follows. In the next section we provide all the basic notions and facts for graphs over non-Archimedean fields first and random walks over directed graphs over the reals second. In Section~\ref{sec:Correspondence} we then construct directed real graphs from non-Archimedean ones together with their random walks. This also allows us to define recurrence and transience for vertices in the original graph.  We then give a first criterion for recurrence and transience, Theorem~\ref{thm:rectranscomponent} and also characterize the directed real graphs which can be obtained from non-Archimedean ones, Theorem~\ref{thm:infinitevsdirected}. In Section~\ref{sec:G}, we then introduce the quantity $ \G(a) $ for a vertex $ a $ discussed above and study its basic properties. Finally, in Section~\ref{sec:char} we characterize recurrence and transience of vertices in the graph with the help of $ \G(a) $ in Theorem~\ref{thm:charGesscomp} and Corollary~\ref{cor:charGesscomp}.

\section{Set up and preliminaries}
\subsection{Weighted graphs over ordered field}
An ordered field $(\K, \succ)$ is called {\em non-Archimedean} if there exists an {\em infinitesimal}, i.e., $\tau \succ 0$  in  $\Bbb K$ such that 
$$
{\tau}\prec \dfrac{1}{n}=\dfrac{1}{\underbrace{1+\dots+1}_{n\mbox{ \tiny{times}}}}
$$
for any $n\in \mathbb N\subseteq\K$. Otherwise, the field is called {\em Archimedean}. Any element $\mathcal N$ with $\mathcal N\succ N$ for all $N\in \Bbb N$ is called {\em infinitely large}. Obviously, the field is non-Archimedean if and only if there exists an infinitely large element.

We denote the positive elements of $ \K $ by $ \K^+=\{x\in \K\mid x\succ 0\} $. The \emph{absolute value $|k|$  of $k\in \K$} is an element of  $ \K^+\cup\{0\} $ defined as 
\begin{equation*}
|k|=\begin{cases}
k,&:\mbox{if }k\succeq 0\\
-k&: \mbox{otherwise.}
\end{cases}
\end{equation*}

Clearly, also the  rationals $ \mathbb{Q} $ are a subfield of any ordered field. For two elements $k_1,k_2\in \Bbb K^+$ we fix the following notation:\\
We write
$k_1\pp k_2$ if  $\dfrac{k_1}{k_2}$ is an infinitesimal, i.e.,
$$
k_1\pp k_2\qquad:\Longleftrightarrow\qquad\dfrac{k_1}{k_2}\prec c\qquad\mbox{for all }c\in \Bbb Q^+.
$$
We write
$k_1\precsim k_2$ if    $\dfrac{k_1}{k_2}$ is at most finite, i.e.,
$$
k_1\precsim k_2\qquad:\Longleftrightarrow\qquad \dfrac{k_1}{k_2}\prec c_0\qquad\mbox{for some }c_0\in \Bbb Q^+.
$$
We write
 $k_1\simeq k_2$, if $k_1\precsim k_2$ and $k_1\succsim k_2$, i.e.
$$
k_1\simeq k_2\qquad:\Longleftrightarrow\qquad c_1\prec \dfrac{k_1}{k_2}\prec c_2 \qquad\mbox{for some }c_1, c_2 \in \Bbb Q^+
$$
Note that $\simeq $ is an equivalence relation.  Further, we say that $k\in \Bbb K$ is \emph{at most finite}, if $k\precsim 1$, i.e., $k$ is not infinitely large.  We denote by $\Kf$ the set of all at most finite elements in the field $\K$.

\begin{definition}
Let $(\K, \succ)$ be an ordered field.  {\em A weighted graph} over  an at most countable  set $V$, whose elements are called the \emph{vertices},  is a symmetric map
$b:V\times V\rightarrow \K^+\cup\{0\}$  with vanishing diagonal, (i.e., $ b(x,y)=b(y,x)  $ and $ b(x,x)=0 $ for all $ x,y\in V $).
\end{definition}

Having zero diagonal  means that that graphs have no {loops}. We write $x\sim y$ whenever $b(x,y)\succ 0$ and say that there is an {\em edge} between $x$ and $y$.

An {\em (undirected) path} between any vertices $x,y\in V$ is a sequence $(x_1,\ldots, x_{n}), n\in \mathbb N$ such that
$$
x=x_0\sim x_1\sim x_2\sim \dots \sim x_n=y.
$$
A graph is called {\em connected}, if there is a path between any two vertices.  
Furthermore, a subset $ U\subseteq V $ is called \emph{connected} if the restriction $ b\left|_{U\times U}\right.$  is connected.
 A graph is called {\em locally finite}, if for all $x\in V$ we have $$ \#\{y\in V\mid y\sim x\}<\infty .$$  

\begin{assumption}
	In the following, $ b $ always denotes a connected locally finite graph over an at most countable set $ V $ taking values in an ordered field $ \mathbb{K} $.
\end{assumption}
For any $U\subseteq V$ and $ x\in V $, we denote
$$
b(U)=\sum_{z\in U}\sum_{y\in V}b(z,y) \qquad \mbox{ and }\qquad b(x):=b(\{x\})=\sum_{x\in V}b(x,y).
$$
The \emph{normalized weight} $p:V\times V\to \mathbb{K}$ of a graph $ b $ over $ V $ is defined as
\begin{equation*}
p(x,y)=\dfrac{b(x,y)}{b(x)}.
\end{equation*}
Moreover, for $ U\subseteq V $, we define the following set of functions:
$$
\F(U)=\{f \mid f:U\to \K\}
$$
and
$$
C_c(V)=\{\phi \mid \phi:V\to \K, \phi\mbox{ has finite support}\}
$$
where the support of the function is the set of vertices where this function does not vanish. For the compactly supported functions $ C_{c}(V) $, the
characteristic functions $ 1_{x} $ of vertices $ x\in V $ form a basis.
We define a\emph{ transition operator}  by
$$
P:\F(V)\to\F(V),\qquad Pf(x)=\sum_{y\in V} f(y)p(x,y).
$$
By local finiteness,  we can define $P^n$  for  $n\in \mathbb N$ inductively and let $P^0=I$. The matrix elements of $P^n$ with respect to the standard basis are denoted by $p^{(n)}(x,y)$. Note that, for $ n\ge 2 $,
$$
p^{(n)}(x,y) = \sum_{\{x_1,\ldots,x_{n-1}\}\subseteq V}p(x, x_1)p(x_1,x_2)\dots p(x_{n-1},y).
$$

The capacity can be defined via limit  involving solutions of Dirichlet problems or via the  infimum of energies over compactly supported functions. The equivalence of either definition  is shown in \cite{OurPreprint} and we present here the main concepts.

The \emph{Laplace operator} is defined on the set of all functions $ \mathcal{F}(V) $
by
\begin{equation*}
\Delta f(x)=\dfrac{1}{b(x)}\sum_{y\in V}b(x,y)(f(x)-f(y)).
\end{equation*}
and the corresponding quadratic form or \emph{energy form} for $ \phi \in C_{c}(V)$ is defined as
\begin{equation*}
	Q(\phi)=\dfrac{1}{2}\sum_{x,y\in V}b(x,y)(\phi(x)-\phi(y))^2,
\end{equation*}
which is connected to $ \Delta $ via \emph{Green's formula} for $ f\in \mathcal{F} (V)$ and $ \phi\in C_{c}(X) $, cf. \cite[Theorem~22]{Muranova1}
$$
\sum_{x\in V}(b\phi\Delta f)(x)  =\sum_{x\in V} (bf\Delta \phi)(x) =\dfrac{1}{2}\sum_{x,y\in V}b(x,y)(f(x)-f(y))(\phi(x)-\phi(y))=:Q(f,\phi).
$$

Let $ \emptyset\neq K\subseteq V $ be  a finite  connected subset of $V$ and $ a\in K $. The Dirichlet problem for $ K $ and $ a $ then looks for a solution $ v: V\to \mathbb{K} $ of the equation
\begin{equation}\label{dirpr} \tag{DP}
\begin{cases}
\Delta v=0 &\mbox{ on  }K\setminus\{a\},\\
v=1&\mbox{ on  }\{a\},\\
v= 0 &\mbox{ on  }V\setminus K.
\end{cases}
\end{equation}

\begin{proposition}[Maximum principle \cite{Muranova1}]\label{thm::maxpr} Let $ K\subseteq V $ be finite and connected and $ a\in K $. 
	 The Dirichlet problem \eqref{dirpr} has a unique solution $ v $ which   satisfies
	 $$  0 \prec v \preceq 1\qquad \mbox{on } K$$
	 and
	 \begin{equation*}
	Q(v)=\Delta v(a)b(a)=-\sum_{x\in V\setminus K}\Delta v(x)b(x).
	 \end{equation*}
	 Moreover, for any function $f\in C_c(V)$ with $f(a)=1$  and $f=0$ on $ V\setminus K $,
	 \begin{equation*}
	 Q(v)\preceq Q(f),
	 \end{equation*}
	 i.e., the solution of the Dirichlet problem minimizes the energy.
\end{proposition}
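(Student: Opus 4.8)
The plan is to prove the statement in four steps: unique solvability of \eqref{dirpr}, the (weak and strong) maximum principle, the two energy identities, and the minimality of the energy. Throughout I write $W:=K\setminus\{a\}$, a finite set, and I use that $b(x)\succ 0$ for every $x$ (the graph being connected), so that $p(x,\cdot)$ is well defined with $\sum_y p(x,y)=1$.

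\emph{Existence and uniqueness.} Since $v$ is prescribed on $\{a\}$ and on $V\setminus K$, the equation $\Delta v=0$ on $W$ is a finite linear system over $\K$ for the values $(v(x))_{x\in W}$: writing $v=1_a+u$ with $u$ supported on $W$, it reads $\Delta u(x)=-\Delta 1_a(x)$ for $x\in W$. It thus suffices to show the associated homogeneous system is injective. If $u\in C_c(V)$ is supported on $W$ with $\Delta u=0$ on $W$, then Green's formula gives
$$
Q(u)=Q(u,u)=\sum_{x\in V}b(x)\,u(x)\,\Delta u(x)=\sum_{x\in W}b(x)\,u(x)\,\Delta u(x)=0 .
$$
As $Q(u)=\tfrac12\sum_{x,y}b(x,y)(u(x)-u(y))^2$ is a finite sum of elements $\succeq 0$ of $\K$, every summand vanishes, so $u$ is constant along each edge, hence constant on $V$ by connectedness, hence $u\equiv 0$ since $u(a)=0$. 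Therefore the finite system is invertible and \eqref{dirpr} has a unique solution $v\in C_c(V)$. (Uniqueness alone also follows from the maximum principle below, but existence genuinely needs the linear-algebra argument, since over a general ordered field one cannot minimise $Q$ over the affine admissible set by a compactness argument.)

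\emph{Maximum principle.} First I would show $0\preceq v\preceq 1$ on $V$. Since $v$ has finite support it attains a maximum $M$ on $V$. If $M\succ 1$, then $M$ is attained neither at $a$ (where $v=1$) nor on $V\setminus K$ (where $v=0$), so it is attained at some $x_0\in W$; there $\Delta v(x_0)=0$ rewrites as $v(x_0)=\sum_y p(x_0,y)v(y)$ with $\sum_y p(x_0,y)=1$ and $v\preceq M$, forcing $v(y)=M$ for every $y\sim x_0$. Hence $\{v=M\}\subseteq W$ is nonempty and closed under passing to neighbours, so it equals $V$ by connectedness, contradicting $v(a)=1$. The same argument applied to the minimum gives $v\succeq 0$. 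For the strict bound $v\succ 0$ on $K$, suppose $v(x_0)=0$ for some $x_0\in W$; then $0=v(x_0)=\sum_y p(x_0,y)v(y)$ is a finite sum of elements $\succeq 0$, so $v(y)=0$ for every $y\sim x_0$, and propagating this equality backwards along a simple path in $K$ from $a$ to $x_0$ (whose interior vertices all lie in $W$, where $\Delta v=0$) forces $v(a)=0$, a contradiction.

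\emph{Energy identities and minimality.} Applying Green's formula with $f=\phi=v$ (legitimate as $v\in C_c(V)$) gives $Q(v)=\sum_{x\in V}b(x)v(x)\Delta v(x)$, where the summand vanishes unless $x=a$ — on $W$ because $\Delta v=0$, off $K$ because $v=0$ — so $Q(v)=b(a)\Delta v(a)$. For the second identity, the symmetry of $b$ and the finite support of $v$ give
$$
\sum_{x\in V}b(x)\Delta v(x)=\sum_{x,y\in V}b(x,y)\bigl(v(x)-v(y)\bigr)=0,
$$
and splitting the left sum over $\{a\}$, $W$, and $V\setminus K$ yields $b(a)\Delta v(a)=-\sum_{x\in V\setminus K}b(x)\Delta v(x)$. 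Finally, given admissible $f$, put $\psi:=f-v\in C_c(V)$, which is supported on $W$; expanding the quadratic form,
$$
Q(f)=Q(v)+2Q(v,\psi)+Q(\psi),
$$
with $Q(v,\psi)=\sum_{x\in W}b(x)\psi(x)\Delta v(x)=0$ by Green's formula and $\Delta v|_W=0$, while $Q(\psi)\succeq 0$; hence $Q(f)\succeq Q(v)$. The step I expect to require the most care is the strong maximum principle, i.e. upgrading $v\succeq 0$ to $v\succ 0$ on all of $K$: over a general ordered field there is no compactness to exploit, so the argument must be purely combinatorial, relying on connectedness of $K$ together with the elementary but crucial fact that a finite sum of nonnegative elements of an ordered field vanishes only if every summand does — the same observation that, in the first step, replaces the usual analytic construction of an energy minimiser.
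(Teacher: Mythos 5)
Your proof is correct: the linear-algebra argument for unique solvability, the discrete maximum/minimum principle with propagation along paths, and the Green's-formula computations for the energy identities and the minimality all go through over a general ordered field exactly as you write them. The paper itself does not prove this proposition — it imports it from \cite{Muranova1} — and your argument is the standard one used there, so there is nothing to flag beyond noting that the ``finite sum of nonnegative elements vanishes only if each summand does'' observation you highlight is indeed the key substitute for compactness in this setting.
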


For a set $ U \subseteq V $, we denote by $ \pi_{U}: \mathcal{F}(V)\to \mathcal{F}(U) $ the canonical projection and by $ \iota_U: \mathcal{F}(V)\to \mathcal{F}(U)  $ the embedding via extension by zero. For an operator $ A: \mathcal{F}(V)\to \mathcal{F}(V)  $, we then denote by $ A_{U} =  \pi_U A\iota_{U} $ the corresponding operator on $ \mathcal{F}(U) $. 
The matrix elements $ p^{(n)}_\subsV(x,y) $, $x,y\in \subsV$ of  powers of the restricted transition operator $ P_{U}^{n} $ satisfy $ p^{(1)}_{\subsV} (x,y)=p_{U}(x,y) $ and, for $ n\ge 2 $,
$$ p^{(n)}_\subsV(x,y) = \sum_{\{x_1,\ldots,x_{n-1}\}\subseteq\subsV}p(x, x_1)p(x_1,x_2)\dots p(x_{n-1},y).$$
However, observe that one does not longer have $ \sum_{y\in U}p({x,y})=1 $ for $ x\in U $ but only $ \sum_{y\in U}p({x,y})\preceq 1 $ which is a strict inequality whenever there is $ y\in V\setminus U $ with $ x\sim y $.

For   $K\subseteq V$ finite and  $a\in K$, the {\em renormalized Dirichlet problem} looks for $ \widetilde{v}\in \mathcal{F}(K) $ such that
\begin{equation}\label{dirprmod} \tag{$ \widetilde{\mathrm{DP}}$}
\Delta_K\widetilde v= 1_a
\end{equation}
and we call $ \Delta_{K} $ the \emph{Dirichlet Laplacian for $ K $}.
There is the following connection to the Dirichlet Problem \eqref{dirpr}.

\begin{lemma}[Renormalized Dirichlet problem, Lemma 2.5 and~4.5 in \cite{OurPreprint}]\label{lem:RenDP}
Let   $K\subseteq V $ be finite and connected and $ a\in K $. 
	The Dirichlet problem \eqref{dirprmod} has a unique solution $\widetilde v $  which satisfies
	$$ \widetilde v=\dfrac {v}{\Delta v(a)}\qquad\mbox{and }\qquad v=\dfrac{ \widetilde v}{\widetilde v(a)}, $$ where $v$ is the solution of the Dirichlet problem \eqref{dirpr}. Furthermore, the Dirichlet Laplacian $ \Delta_{K} $ is invertible   and 
	$$
	\widetilde v=\Delta^{-1}_{K} 1_a\succeq 0.
	$$
\end{lemma}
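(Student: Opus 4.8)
The plan is to derive everything about the renormalized Dirichlet problem \eqref{dirprmod} from what is already available for the ordinary Dirichlet problem \eqref{dirpr}, namely the maximum principle (Proposition~\ref{thm::maxpr}) and Green's formula. The bridge between the two problems is the pair of maps $\pi_K,\iota_K$ together with the elementary fact that the solution $v$ of \eqref{dirpr} \emph{already vanishes on $V\setminus K$}; hence $\iota_K\pi_K v=v$ on all of $V$ and therefore $\Delta_K(\pi_K v)=\pi_K\Delta\iota_K\pi_K v=\pi_K\Delta v$. The only non-formal ingredient needed is the strict positivity $\Delta v(a)\succ 0$.

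First I would establish $\Delta v(a)\succ 0$. By Proposition~\ref{thm::maxpr} we have $\Delta v(a)\,b(a)=Q(v)$; here $b(a)\succ 0$ since $a$ has at least one neighbour (the graph being connected and non-trivial), and $Q(v)\succ 0$ since $Q(v)=0$ would force $v$ to be constant on the connected graph $b$, contradicting $v(a)=1$ and $v|_{V\setminus K}=0$. Dividing in $\K$ gives $\Delta v(a)=Q(v)/b(a)\succ 0$. Now set $\widetilde v:=v/\Delta v(a)$, viewed as an element of $\F(K)$; since $v=0$ off $K$ we have $\iota_K\widetilde v=v/\Delta v(a)$ on $V$, so
\[
\Delta_K\widetilde v=\pi_K\Delta\iota_K\widetilde v=\frac{\pi_K\Delta v}{\Delta v(a)},
\]
which by the first line of \eqref{dirpr} vanishes on $K\setminus\{a\}$ and equals $\Delta v(a)/\Delta v(a)=1$ at $a$; thus $\Delta_K\widetilde v=1_a$, proving existence together with $\widetilde v=v/\Delta v(a)$. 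Evaluating at $a$ gives $\widetilde v(a)=v(a)/\Delta v(a)=1/\Delta v(a)\succ 0$, so on $K$ we obtain $v=\Delta v(a)\,\widetilde v=\widetilde v/\widetilde v(a)$ (and both sides vanish off $K$), which is the second reciprocal identity; and $\widetilde v\succeq 0$ is immediate from $v\succeq 0$ (Proposition~\ref{thm::maxpr}) and $\Delta v(a)\succ 0$.

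It remains to prove uniqueness, for which it suffices to show that $\Delta_K$ is injective on $\F(K)$; since $\F(K)$ is finite-dimensional over $\K$ this also yields invertibility of $\Delta_K$, and then the solution $\widetilde v$ is necessarily $\Delta_K^{-1}1_a$, as claimed. Suppose then $\Delta_K w=0$ for some $w\in\F(K)$ and put $f=\iota_K w\in C_c(V)$. Applying Green's formula with $f$ in both arguments yields $Q(f)=\sum_{x\in V}b(x)f(x)\Delta f(x)$; for $x\in K$ the factor $\Delta f(x)=(\Delta_K w)(x)$ vanishes, while for $x\notin K$ the factor $f(x)$ vanishes, so $Q(f)=0$. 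Since every summand $b(x,y)(f(x)-f(y))^2$ of $2Q(f)$ is $\succeq 0$ and only finitely many are nonzero (local finiteness, $f\in C_c(V)$), each of them vanishes, hence $f$ is constant on the connected graph $b$; as $f=0$ on $V\setminus K$ this forces $f\equiv 0$, i.e.\ $w=0$.

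The main obstacle — to the extent there is one — is the strict positivity $\Delta v(a)\succ 0$, equivalently the injectivity of $\Delta_K$: this is precisely where connectedness of $b$ (and $K\subsetneq V$) enters, and it is what legitimises the division defining $\widetilde v$. Everything else is linear algebra over the field $\K$ and careful bookkeeping with $\pi_K$ and $\iota_K$; note in particular that the step ``$Q(f)=0\Rightarrow f$ constant'' is valid over an arbitrary ordered field, since it uses only that squares are $\succeq 0$ in $\K$.
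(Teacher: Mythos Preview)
The paper does not actually prove this lemma; it is quoted from \cite{OurPreprint} (Lemmas~2.5 and~4.5 there) and used as a black box. So there is no proof in the paper to compare your argument against.

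That said, your argument is correct and is the natural one. A couple of minor remarks. First, your proof tacitly uses $K\subsetneq V$: you need a vertex outside $K$ both for the step ``$Q(v)=0\Rightarrow v$ constant $\Rightarrow$ contradiction'' and for the injectivity argument at the end (if $K=V$ then $\Delta_K$ annihilates constants and $\Delta_K\widetilde v=1_a$ has no solution, as one sees by pairing with the constant function). You flag this in your closing paragraph, but it would be cleaner to state it as a hypothesis at the outset, since the paper's statement of the lemma does not make it explicit; in the paper's intended applications $V$ is infinite, so any finite $K$ is automatically proper. Second, the identification $\Delta f(x)=(\Delta_K w)(x)$ for $x\in K$ that you use in the injectivity step is exactly the observation $\iota_K\pi_K f=f$ applied to $f=\iota_K w$, so it is consistent with the ``bridge'' you set up at the start; it may be worth saying this in one line rather than leaving it implicit. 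Otherwise the linear-algebra and ordered-field steps (squares $\succeq 0$, finite sums of non-negatives vanish iff each term does, injective $=$ bijective in finite dimension) are all valid over an arbitrary ordered field, as you note.
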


\begin{definition}[Capacity, Definition~2.3 \cite{OurPreprint}]\label{def::capK}
The \emph{capacity}  $ \c_{K} $
for a finite and connected set $K\subseteq  V$ and $ a\in K $ is defined as
$$
\c_{K}(a)=\Delta v(a)b(a),
$$
where $v$ is the solution of the Dirichlet problem \eqref{dirpr}.
\end{definition}

From the definition, Lemma~\ref{lem:RenDP} and Green's formula, we can  immediately conclude at
\begin{equation*}
\c_{K}(a)=\dfrac{b(a)}{\Delta^{-1}_{K} 1_a(a)}=Q (v).
\end{equation*}

\begin{lemma}[Lemma 2.6 in \cite{OurPreprint}]\label{lem::FiniteSolxy} Let $ K \subseteq V$ be finite and connected and $ x,y \in K $. Let $ v^{x} $, respectively $ v^{y} $, be solutions of \eqref{dirpr} for $ K $ and $x  $, respectively $ y $. Then,
$$
\dfrac{v^x(y)}{\c_{K}(x)}=\dfrac{v^y(x)}{\c_{K}(y)}.
$$
\end{lemma}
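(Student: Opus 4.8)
The plan is to reduce the identity to a symmetry property of the Dirichlet Laplacian $\Delta_K$, exploiting the reciprocity that comes from the fact that $b(x)\Delta_K$ is a symmetric matrix. First I would recall from Lemma~\ref{lem:RenDP} that the solution $\widetilde v^x$ of the renormalized problem \eqref{dirprmod} for $K$ and $x$ is $\Delta_K^{-1}1_x$, and that it relates to the solution $v^x$ of \eqref{dirpr} via $v^x = \widetilde v^x/\widetilde v^x(x)$, while by Definition~\ref{def::capK} together with Lemma~\ref{lem:RenDP} one has $\c_K(x)=\Delta v^x(x)b(x)=b(x)/\widetilde v^x(x)$. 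Hence
$$
\frac{v^x(y)}{\c_K(x)}=\frac{\widetilde v^x(y)}{\widetilde v^x(x)}\cdot\frac{\widetilde v^x(x)}{b(x)}=\frac{\widetilde v^x(y)}{b(x)}=\frac{(\Delta_K^{-1}1_x)(y)}{b(x)}.
$$
So the claimed equality is equivalent to
$$
\frac{(\Delta_K^{-1}1_x)(y)}{b(x)}=\frac{(\Delta_K^{-1}1_y)(x)}{b(y)},
\qquad\text{i.e.}\qquad
b(y)\,(\Delta_K^{-1}1_x)(y)=b(x)\,(\Delta_K^{-1}1_y)(x).
$$

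The key step is therefore to show that the matrix $M$ with entries $M(x,y)=b(x)\,(\Delta_K^{-1})(x,y)$ is symmetric, where $(\Delta_K^{-1})(x,y)=(\Delta_K^{-1}1_y)(x)$. Equivalently, one shows that $b(x)\Delta_K(x,y)$ is symmetric in $x,y$ and then transfers symmetry to the inverse. From the definition of the Laplacian, $b(x)\Delta f(x)=\sum_{z}b(x,z)(f(x)-f(z))$, so the matrix of $b\cdot\Delta$ on all of $V$ has entries $b(x,x')$ off-diagonal (up to sign, namely $-b(x,x')$ for $x\ne x'$) and $b(x)$ on the diagonal — manifestly symmetric by $b(x,y)=b(y,x)$. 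Restricting to $K$ via $\Delta_K=\pi_K\Delta\iota_K$ preserves this symmetry, so $D\Delta_K$ is symmetric where $D=\operatorname{diag}(b(x))_{x\in K}$. Then, using that $\Delta_K$ is invertible (Lemma~\ref{lem:RenDP}) and $D$ is invertible (since $b(x)\succ0$ by connectedness and local finiteness), symmetry of $D\Delta_K$ gives $D\Delta_K=(D\Delta_K)^{\mathsf T}=\Delta_K^{\mathsf T}D$, hence $\Delta_K^{-1}D^{-1}=D^{-1}(\Delta_K^{\mathsf T})^{-1}=D^{-1}(\Delta_K^{-1})^{\mathsf T}$, i.e. $D\Delta_K^{-1}D^{-1}=(\Delta_K^{-1})^{\mathsf T}$, which rearranges to $D\Delta_K^{-1}=(\Delta_K^{-1})^{\mathsf T}D$. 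Reading off the $(x,y)$-entry of the last identity yields $b(x)(\Delta_K^{-1}1_y)(x)=b(y)(\Delta_K^{-1}1_x)(y)$, which is exactly what is needed. Alternatively, and perhaps cleaner in the writeup, one can avoid transposes entirely and argue directly via Green's formula: apply the identity $\sum_x b(x)\,\phi(x)\,\Delta_K^{-1}1_y(x)\cdot\text{(Laplacian)}$ — more precisely, test $\Delta_K(\widetilde v^x)=1_x$ against $\widetilde v^y$ and use the symmetry of the bilinear form $Q_K(f,g)=\sum_x b(x)g(x)\Delta_K f(x)$ restricted to $K$, giving $b(y)\widetilde v^x(y)=Q_K(\widetilde v^x,\widetilde v^y)=Q_K(\widetilde v^y,\widetilde v^x)=b(x)\widetilde v^y(x)$.

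I would then simply substitute back $\widetilde v^x(y)=v^x(y)\widetilde v^x(x)=v^x(y)b(x)/\c_K(x)$ into $b(y)\widetilde v^x(y)=b(x)\widetilde v^y(x)$ to recover $v^x(y)/\c_K(x)=v^y(x)/\c_K(y)$ after cancelling the common factor $b(x)b(y)$. The main obstacle — though it is more a point of care than a genuine difficulty — is making sure that the restriction to $K$ of Green's formula is applied correctly: Green's formula as stated holds for $\phi\in C_c(V)$ and $f\in\mathcal F(V)$, and one must check that replacing $\Delta$ by $\Delta_K=\pi_K\Delta\iota_K$ and summing only over $K$ still produces a symmetric bilinear form, which it does precisely because the symmetric matrix $(b(x)\Delta(x,y))_{x,y\in V}$ stays symmetric under the principal submatrix operation $\pi_K(\cdot)\iota_K$. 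Everything else is linear algebra and substitution; no field-specific subtlety arises here since all manipulations are algebraic identities valid over any field, and invertibility of $\Delta_K$ over $\K$ is already granted by Lemma~\ref{lem:RenDP}.
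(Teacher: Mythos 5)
Your proposal is correct. Note that the paper itself gives no proof of this lemma --- it is imported verbatim as Lemma~2.6 of \cite{OurPreprint} --- so there is nothing in this text to compare against; your reduction to the identity $b(y)\,\Delta_K^{-1}1_x(y)=b(x)\,\Delta_K^{-1}1_y(x)$ via Lemma~\ref{lem:RenDP} and the relation $\c_K(x)=b(x)/\Delta_K^{-1}1_x(x)$, followed by the symmetry of $D\Delta_K$ (equivalently, Green's formula applied to the zero-extensions of $\widetilde v^x$ and $\widetilde v^y$, which lie in $C_c(V)$ so the stated formula applies directly), is the standard reciprocity argument and is valid over any ordered field.
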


\begin{lemma}[Monotonicity of capacity]\label{lem::monOfcap}
If $a\in K\subseteq L\subseteq V$, and $K, L$ are finite, then
$$
\c_{L}(a)\preceq \c_{K}(a).
$$
\end{lemma}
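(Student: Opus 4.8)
The plan is to exploit the variational description of the capacity furnished by the maximum principle, Proposition~\ref{thm::maxpr}: there $\c_K(a)=Q(v)$ for the solution $v$ of \eqref{dirpr}, and this energy is minimal among all functions in $C_c(V)$ that equal $1$ at $a$ and vanish on $V\setminus K$. Monotonicity should then follow simply because enlarging $K$ to $L$ enlarges the class of admissible competitors.

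Concretely, I would fix the solution $v_K$ of \eqref{dirpr} for the pair $(K,a)$ and the solution $v_L$ of \eqref{dirpr} for the pair $(L,a)$; both exist and are unique by Proposition~\ref{thm::maxpr}. Using the identity $\c_K(a)=Q(v_K)$ and $\c_L(a)=Q(v_L)$ recorded right after Definition~\ref{def::capK}, the claim reduces to showing $Q(v_L)\preceq Q(v_K)$.

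The key point is that $v_K$ is an admissible competitor for the variational problem attached to $L$. Indeed, $v_K\in C_c(V)$ since its support lies in the finite set $K$; it satisfies $v_K(a)=1$ by \eqref{dirpr}; and, crucially, from $K\subseteq L$ one has $V\setminus L\subseteq V\setminus K$, so $v_K$ vanishes on $V\setminus L$. Applying the minimality statement of Proposition~\ref{thm::maxpr} with $L$ in place of $K$ and with test function $f=v_K$ therefore gives $Q(v_L)\preceq Q(v_K)$, i.e.\ $\c_L(a)=Q(v_L)\preceq Q(v_K)=\c_K(a)$.

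I do not expect any real obstacle here; the argument is essentially bookkeeping. The single point that deserves a moment of attention is the set inclusion $V\setminus L\subseteq V\setminus K$, which is precisely what guarantees that the function $v_K$, supported in $K$, also vanishes off $L$ and hence may legitimately be inserted into the energy‑minimizing inequality for $L$. (One could alternatively argue via the formula $\c_K(a)=b(a)/\Delta^{-1}_K 1_a(a)$ from Lemma~\ref{lem:RenDP}, but this would require a separate monotonicity statement for the inverses of the Dirichlet Laplacians, so the variational route above is the more economical one.)
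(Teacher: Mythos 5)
Your argument is correct and is exactly the paper's proof: the authors likewise deduce $Q(v_L)\preceq Q(v_K)$ from the energy-minimizing property in Proposition~\ref{thm::maxpr}, merely stating it more tersely. Your explicit verification that $v_K$ is an admissible competitor for the problem on $L$ (via $V\setminus L\subseteq V\setminus K$) is the detail the paper leaves implicit.
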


\begin{proof}
Follows from the maximum principle, Proposition \ref{thm::maxpr} as this shows
$
Q(v_L)\preceq Q(v_K),
$
where $v_L$ and $v_K$ are the solutions of Dirichlet problem \eqref{dirpr} on $L$ and on $K$ respectively.
\end{proof}

We call an increasing sequence of finite connected subsets $ K_{n}\subseteq V $ an \emph{exhaustion} of $V$ if $V=~ \bigcup_{n}K_{n} $.

\begin{definition}[Effective Capacity, Definition 17 in \cite{Muranova2}]\label{defRT} 
Let $(K_n)$ be an exhaustion of $ V $. If for a fixed vertex $a\in K_0\subseteq V$ the limit of the sequence $(\c_{K_n}(a))$ exists in the order topology of $\K$, we call it \emph{effective capacity of $a$} and denote it by 
$$ 
\c(a) :=\lim_{n\to\infty }\c_{K_n}(a).
$$
\end{definition}

The following theorem shows that existence of the limit is independent of the base vertex $ a $.

\begin{theorem}[Theorem 2.9 in \cite{OurPreprint}]\label{Thm::ind}
 If $ \c(a) $ exists for some $ a\in V $, then it exists for all $ x\in V $. Moreover, if $ \c(a) =0$  for some $ a\in V $, then $ \c(x) =0$  for all $ x\in V $.
\end{theorem}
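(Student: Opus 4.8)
The plan is to reduce the whole statement to the ratio identity contained in Lemma~\ref{lem::FiniteSolxy}. Fix $a,x\in V$; using connectedness, choose an exhaustion $(K_n)$ with $a,x\in K_0$, and write $v^a_n$ (respectively $v^x_n$) for the solution of \eqref{dirpr} for $K_n$ and $a$ (respectively $x$). Rearranging Lemma~\ref{lem::FiniteSolxy} gives, for every $n$,
\begin{equation}\label{eq:plan-ratio}
\c_{K_n}(x)=\c_{K_n}(a)\cdot\frac{v^x_n(a)}{v^a_n(x)}.
\end{equation}
The first thing I would record is that the auxiliary potentials behave well: by the maximum principle (Proposition~\ref{thm::maxpr}) one has $0\prec v^a_n(x)\preceq 1$ and $0\prec v^x_n(a)\preceq 1$, and both sequences are monotone increasing in $n$. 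Monotonicity follows by applying the maximum principle to $w:=v^a_{n+1}-v^a_n$ on $K_n$: it is $\Delta$-harmonic on $K_n\setminus\{a\}$, vanishes at $a$, and equals $v^a_{n+1}\succeq 0$ on $V\setminus K_n$, so $w\succeq 0$ on $K_n$ (and symmetrically for $v^x_n$). In particular $v^a_n(x)\succeq v^a_0(x)\succ 0$ and $v^x_n(a)\succeq v^x_0(a)\succ 0$ stay bounded below by fixed positive elements of $\K$.

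With this in hand the ``moreover'' part is immediate: if $\c(a)=0$, i.e.\ $\c_{K_n}(a)\to 0$, then \eqref{eq:plan-ratio} and the lower bound on $v^a_n(x)$ give
\[
0\preceq\c_{K_n}(x)=\c_{K_n}(a)\cdot\frac{v^x_n(a)}{v^a_n(x)}\preceq\frac{1}{v^a_0(x)}\,\c_{K_n}(a)\longrightarrow 0,
\]
so $\c(x)=0$. (The same estimate shows $\c_{K_n}(x)\simeq\c_{K_n}(a)$ uniformly in $n$ in general, which is what makes the statement plausible.) For the first assertion I may therefore assume that $\c(a)$ exists and is nonzero; by \eqref{eq:plan-ratio} and the lower bounds, it then suffices to prove that $(v^a_n(x))_n$ and $(v^x_n(a))_n$ converge. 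The engine will be an energy identity: using Green's formula together with $\supp v^a_m\subseteq K_m\subseteq K_n$ and $\Delta v^a_n\equiv 0$ on $K_n\setminus\{a\}$, one computes for $n\ge m$
\[
Q\big(v^a_n-v^a_m\big)=Q(v^a_n)-2Q(v^a_n,v^a_m)+Q(v^a_m)=\c_{K_n}(a)-2\c_{K_n}(a)+\c_{K_m}(a)=\c_{K_m}(a)-\c_{K_n}(a).
\]
Since $(\c_{K_n}(a))$ converges it is Cauchy, so $Q(v^a_n-v^a_m)\to 0$. Fixing a finite path $a=w_0\sim\dots\sim w_\ell=x$ with $\rw:=\min_i b(w_i,w_{i+1})\succ 0$, the Cauchy--Schwarz inequality applied to the telescoping sum $\phi(x)-\phi(a)=\sum_i(\phi(w_{i+1})-\phi(w_i))$ yields a Poincar\'e-type bound $(\phi(x)-\phi(a))^2\preceq C_x\,Q(\phi)$ for all $\phi\in C_c(V)$, with $C_x$ depending only on $x$ and $b$. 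Applying it to $\phi=v^a_n-v^a_m$ and using $(v^a_n-v^a_m)(a)=0$,
\[
\big(v^a_n(x)-v^a_m(x)\big)^2\preceq C_x\big(\c_{K_m}(a)-\c_{K_n}(a)\big)\longrightarrow 0\qquad(n,m\to\infty),
\]
and since $t_k^2\to 0$ forces $t_k\to 0$ in the order topology of $\K$, the monotone bounded sequence $(v^a_n(x))_n$ converges, to some $\mu\succ 0$.

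The step I expect to be the main obstacle is the remaining one: genuine convergence of $(v^x_n(a))_n$ --- equivalently, by \eqref{eq:plan-ratio}, of $(\c_{K_n}(x))_n$. This is exactly where the non-Archimedean setting bites, since a monotone bounded sequence need not converge in $\K$, so boundedness of $(v^x_n(a))_n$ alone is not enough; moreover, running the energy identity with the pole at $x$ only gives $Q(v^x_n-v^x_m)=\c_{K_m}(x)-\c_{K_n}(x)$, which tends to $0$ only once $(\c_{K_n}(x))$ is known to be Cauchy --- precisely what we are after. My plan to break this circularity is to first \emph{trap} $(\c_{K_n}(x))_n$ between sequences known to converge: the lower bound $\c_{K_n}(x)\succeq v^x_0(a)\,\c_{K_n}(a)$ from \eqref{eq:plan-ratio}, and an upper bound such as $\c_{K_n}(x)\,v^a_n(x)^2\preceq \c_{K_n}(a)$ coming from the Cauchy--Schwarz inequality for the positive-definite Dirichlet Laplacian $\Delta_{K_n}$ (its right-hand side converging by the previous paragraph), together with monotonicity of $n\mapsto\Delta_{K_n}^{-1}1_x(x)=b(x)/\c_{K_n}(x)$; once $(\c_{K_n}(x))$ is thereby forced to converge, the pole-at-$x$ energy/Poincar\'e argument applies verbatim to give convergence of $(v^x_n(a))_n$, and \eqref{eq:plan-ratio} then yields $\c_{K_n}(x)\to\c(a)\cdot\lim_n v^x_n(a)/\mu$, completing the proof. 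Sharpening the sandwich so that its two sides share a common limit --- i.e.\ actually pinning down $\lim_n\c_{K_n}(x)$ rather than merely bounding it --- is the delicate point that the analysis of the exhaustion, rather than any formal manipulation, has to carry, and is where I expect the bulk of the work to lie.
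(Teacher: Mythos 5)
First, a point of orientation: the paper does not prove Theorem~\ref{Thm::ind} itself --- it is imported verbatim as Theorem~2.9 of \cite{OurPreprint} --- so your argument can only be judged on its own terms. Much of it is sound: the reduction to Lemma~\ref{lem::FiniteSolxy}, the positive lower bounds $v^a_n(x)\succeq v^a_0(x)\succ 0$, the energy identity $Q(v^a_n-v^a_m)=\c_{K_m}(a)-\c_{K_n}(a)$, and the Poincar\'e-type bound are all correct (the monotonicity $v^a_{n+1}\succeq v^a_n$ needs a comparison principle slightly beyond the literal statement of Proposition~\ref{thm::maxpr}, but that is routine), and together they give a complete proof of the ``moreover'' part: if $\c_{K_n}(a)\to 0$, then $\c_{K_n}(x)\preceq \c_{K_n}(a)/v^a_0(x)\to 0$.

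The main assertion --- existence of $\c(x)$ when $\c(a)$ exists and is nonzero --- is, however, not established. The decisive step, ``the monotone bounded sequence $(v^a_n(x))_n$ converges,'' is precisely the inference that fails over a non-Archimedean ordered field: neither monotone boundedness nor the Cauchy property (nor both together) implies convergence in the order topology of $\K$, because $\K$ need not be Cauchy complete and is never Dedekind complete. Indeed, if monotone bounded sequences converged, the case of \emph{divergent capacity} in Definition~\ref{Def::type} would be vacuous, since $(\c_{K_n}(a))$ is itself always monotone decreasing and bounded. Your Poincar\'e estimate only shows that $(v^a_n(x))$ is Cauchy with modulus controlled by $\c_{K_m}(a)-\c(a)$; it does not exhibit a limit element of $\K$. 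So the obstruction you correctly diagnose for $(v^x_n(a))$ and $(\c_{K_n}(x))$ is already present one step earlier, and the concluding ``sandwich'' is, as you concede, only a plan whose two sides converge to different limits. A proof must actually construct the element $\c(x)\in\K$ --- for instance by pinning down $\lim_n v^x_n(a)/v^a_n(x)$ through a finer comparison with the convergent sequence $(\c_{K_n}(a))$ --- and that construction, which is the real content of the theorem, is missing here.
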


The previous theorem gives rise to the following definition of the capacity type of a graph.

\begin{definition}[Definition~2.10 in \cite{OurPreprint}]\label{Def::type}
The  graph   is said to have
\begin{enumerate}
\item  
{\em null capacity}, if $\c(a)=0$ for some (all) $a\in V$,
\item 
{\em positive capacity}, if $\c(a)$ exists and $\c(a)\ne 0$ for some (all) $a\in V$,
\item
{\em divergent capacity}, if $\c(a)$ does not exist for some (all) $a\in V$.
\end{enumerate}
\end{definition}

\subsection{Random walks and directed graphs over the reals}
In this subsection we recall the basic notions of random walks over the Archimedean field of the reals. We will use this theory later on to connect it to graphs over non-Archimedean fields.

\begin{definition}[Transition matrix]
 A \emph{transition matrix} over an at most countable set $ V $, called the \emph{state space}, is  a map
$\prob:V\times V\rightarrow \mathbb R^+\cup\{0\}$ such that 
$$
\sum_{y\in V}\prob(x,y)= 1
$$ 
for any $x\in V$.
\end{definition}
We denote the corresponding operator on the space of compactly supported functions $ V\to \mathbb{R} $ by  $\Prob$.  For $ \Prob $, we can take again powers and we denote the matrix elements of $ \Prob^{n} $ by $ \prob^{(n)}(x,y) $ for $ x,y\in V $.  
A transition probability $ \prob $ gives rise to a  random walk $ X=(X_{n}) $ via
\begin{align*}
	\mathbb{P}( X_{n}=y \mid  {X}_{0}=x)=\prob^{(n)}(x,y).
\end{align*}
Note that we can  consider $ \pi $ as a $\emph{directed graph}$ over $ V $ which is said to be \emph{locally finite} if $\#\{y\in V\mid \pi(x,y) \} <\infty$ for all $ x\in V $. We write $x\leadsto y$ if $\prob(x,y)\ne 0$. We write $x\leftrightsquigarrow y$ if $x\leadsto y$  and $y\leadsto x$.

Further,  we write $x\to y$, i.e., \emph{$y$ can be reached from $x$}, if $ x=y $ or there exists a {\em directed path} from $x$ to $y$, i.e., there exists  $(x_i)_{i=0}^n\subseteq V, n\in \mathbb N$ such that
\begin{equation*}
x=x_0\leadsto x_1\leadsto x_2\leadsto \dots \leadsto x_n=y.
\end{equation*}
We write $x\leftrightarrow y$ if $x\to y$ and $y\to x$.
Clearly, 
$\leftrightarrow $ is an equivalence relation.
We call an equivalence class an {\em irreducible class}. In graph theoretical terminology, one also speaks of a \emph{strongly connected component} \cite[p.28]{Woess09}.

We extend the  use symbol $\to$ to irreducible classes as follows: for two irreducible classes $C_1, C_2\subseteq V$, we write $C_1\to C_2$ if for some (all) $x\in C_1$ and for some (all) $y\in C_2$ we have  $x\to y$. The following classical fact holds, see e. g. \cite[2.4 Lemma]{Woess09}:

\begin{lemma}
The set of all irreducible classes  is a partially ordered set with respect to the relation $\to$.
\end{lemma}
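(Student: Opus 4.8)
The plan is to verify that $\to$, extended to irreducible classes, is well-defined and then check the three defining properties of a partial order: reflexivity, antisymmetry and transitivity. The only ingredient beyond bookkeeping is the transitivity of $\to$ on \emph{vertices}, which is immediate: concatenating a directed path from $u$ to $v$ with one from $v$ to $w$ produces a directed path from $u$ to $w$ (and if any two of the vertices coincide the statement is trivial by the convention $x\to x$).

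First I would establish that $C_1\to C_2$ is independent of the chosen representatives, which legitimises the "for some (all)" phrasing in the definition. Since every irreducible class is a non-empty equivalence class, the "for all" version trivially implies the "for some" version, so it suffices to show the converse. Suppose $x,x'\in C_1$ and $y,y'\in C_2$ with $x\to y$. From $x'\leftrightarrow x$ we get $x'\to x$, and from $y\leftrightarrow y'$ we get $y\to y'$; chaining these with the directed path realising $x\to y$ and using transitivity of $\to$ on vertices yields $x'\to y'$. Hence the relation on classes is well-defined.

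Reflexivity is then clear: $x\to x$ for every vertex $x$, so $C\to C$ for every irreducible class $C$. For transitivity, assume $C_1\to C_2$ and $C_2\to C_3$ and pick $x\in C_1$, $y\in C_2$, $z\in C_3$; then $x\to y$ and $y\to z$, so $x\to z$ by transitivity of $\to$ on vertices, i.e.\ $C_1\to C_3$. For antisymmetry, assume $C_1\to C_2$ and $C_2\to C_1$ and pick $x\in C_1$, $y\in C_2$; then $x\to y$ and $y\to x$, so $x\leftrightarrow y$, which means $x$ and $y$ lie in the same equivalence class and therefore $C_1=C_2$.

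There is no genuine obstacle here; the statement is a standard fact. The only point demanding a little care is the well-definedness step, since that is where transitivity of $\to$ on vertices is actually invoked and where the "for some (all)" equivalence in the definition is justified; the partial-order axioms themselves follow formally once this is in place.
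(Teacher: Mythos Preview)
Your proposal is correct and complete; it verifies well-definedness of the relation on classes and then checks reflexivity, transitivity and antisymmetry directly. The paper itself does not supply a proof but simply cites this as a classical fact (referring to \cite[2.4~Lemma]{Woess09}), so your argument is in fact more detailed than the paper's own treatment.
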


\begin{definition}[Essential classes]
A maximal element in the set of irreducible classes  with respect to order $\to$ is called \emph{essential class} (or \emph{absorbing class}). 
\end{definition}

\begin{lemma}
Let  $\prob$ be a transition matrix over $V$. A state $x\in V$  is in an essential class if and only if  $x\to y$ implies $y\to x$ for any $y\in V$.
\end{lemma}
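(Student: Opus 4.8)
The plan is to unwind the definition of an essential class as a maximal element of the partially ordered set of irreducible classes and to translate that maximality into the stated vertex-level condition. I write $[x]$ for the irreducible class of a vertex $x$, and I will use freely that $\to$ is transitive on vertices (by concatenating directed paths), that $C_1\to C_2$ for irreducible classes is witnessed by \emph{any} pair of representatives $x\in C_1$, $y\in C_2$ with $x\to y$, and that $\to$ restricts to a partial order on irreducible classes, as recalled just before the statement.

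For the forward implication, assume $x$ lies in an essential class $C=[x]$ and let $y\in V$ with $x\to y$. Writing $C'=[y]$, the relation $x\to y$ witnesses $C\to C'$. Since $C$ is a maximal element of the poset of irreducible classes, $C\to C'$ forces $C'=C$. Hence $y\in C=[x]$, so $y\leftrightarrow x$, and in particular $y\to x$, as desired.

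For the converse, assume that $x\to y$ implies $y\to x$ for every $y\in V$, and set $C=[x]$; I want to show that $C$ is maximal. Suppose $C\to C'$ for some irreducible class $C'$ and pick a representative $y\in C'$. Then $x\to y$, hence $y\to x$ by hypothesis, so $C'\to C$. Since $\to$ is a partial order on irreducible classes, antisymmetry gives $C=C'$. Thus no irreducible class lies strictly above $C$, i.e.\ $C$ is maximal and therefore essential.

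The argument is purely formal, so there is no genuine obstacle; the only points that require care are the bookkeeping between the vertex relation $x\to y$ and the induced relation $C_1\to C_2$ on classes (using that the latter holds for some representatives iff for all), and the use of antisymmetry of $\to$ on irreducible classes in the converse direction to collapse $C\to C'$ and $C'\to C$ to $C=C'$.
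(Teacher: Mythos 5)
Your proof is correct and matches the paper's approach: the paper simply states that the claim is clear from the definition, and your argument is precisely that definitional unwinding (maximality of $[x]$ in the forward direction, antisymmetry of $\to$ on irreducible classes in the converse), carried out with appropriate care.
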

\begin{proof}
	This is clear from the definition.
\end{proof}

\begin{definition}[Closed sets and absorbing states]
A subset $W\subseteq V$ is called {\em closed}, if no element outside $W$ can be reached from any state $x\in W$. A single state $x$, forming a closed set, is called {\em absorbing}.
\end{definition}

\begin{assumption}
We  assume that $\prob(x,x)>0$ if and only if $\prob(x,x)=1$, i.e., we do not have loops, except for absorbing states.
\end{assumption}

The definition of recurrence and transience is classical, see e.g. \cite[Chapter 3]{Woess09}, i.e., 
for a random walk $X=(X_n) $ on a state space $V$ with transition matrix $\prob$, a state $x\in V$ is called \emph{recurrent} if
$$
\mathbb P[X_n=x\mbox{ for infinitely many }n]=1
$$
and \emph{transient} otherwise. In fact, in case of transience we have 
$$
\mathbb P[X_n=x\mbox{ for infinitely many }n]=0,
$$
see e.g. \cite[3.2. Theorem]{Woess09}.

The classical theory gives the following connection between the type of a state and convergence of sums over $ \pi^{(n)} $, see e.g. \cite[3.4. Theorem (a)]{Woess09}
\begin{theorem}\label{thm:defrectr} Let $\prob$ be a transition matrix over $ V $. Then  the following holds:
\begin{enumerate}[label=$(\arabic*)$]
\item[(a)]
A state $x\in V$ is transient if and only if 
$$\displaystyle\sum_{n=0}^{\infty} \prob^{(n)}(x,x)<\infty.
$$
\item[(b)]
A state $x\in V$ is recurrent if and only if 
$$
\displaystyle\sum_{n=0}^{\infty} \prob^{(n)}(x,x)=\infty.
$$
\end{enumerate}
\end{theorem}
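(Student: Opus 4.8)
The plan is to reduce both assertions to a single scalar, the return probability
$$
u:=\mathbb{P}\big[\exists\,n\ge 1:\ X_{n}=x \ \big|\ X_{0}=x\big],
$$
and to identify $\sum_{n}\prob^{(n)}(x,x)$ with the expected number of visits of the walk to $x$. Writing $\mathbb{P}_{x}$ and $\mathbb{E}_{x}$ for probability and expectation conditioned on $X_{0}=x$, I would first introduce the visit-counting variable $N=\sum_{n\ge 0}\mathbbm{1}_{\{X_{n}=x\}}$ and observe, via Tonelli's theorem, that
$$
\mathbb{E}_{x}[N]=\sum_{n=0}^{\infty}\mathbb{P}_{x}[X_{n}=x]=\sum_{n=0}^{\infty}\prob^{(n)}(x,x)\in[1,\infty],
$$
so that the series in the statement is exactly $\mathbb{E}_{x}[N]$.

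The second step is to describe the law of $N$ under $\mathbb{P}_{x}$. Applying the strong Markov property at the successive (random) return times to $x$ — after each visit the walk restarts afresh from $x$ and returns at least once more with probability $u$, independently of the past — I would obtain $\mathbb{P}_{x}[N\ge k+1]=u^{k}$ for every $k\ge 0$. Summing the tail, $\mathbb{E}_{x}[N]=\sum_{k\ge 0}\mathbb{P}_{x}[N\ge k+1]=\sum_{k\ge 0}u^{k}$, which equals $(1-u)^{-1}<\infty$ if $u<1$ (and then $N<\infty$ $\mathbb{P}_{x}$-a.s.), and equals $\infty$ if $u=1$ (and then $\mathbb{P}_{x}[N\ge k]=1$ for all $k$, hence $\mathbb{P}_{x}[N=\infty]=1$).

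The final step connects this with the type of $x$. By definition $x$ is recurrent iff $\mathbb{P}_{x}[N=\infty]=1$; by the dichotomy just established this holds iff $u=1$ iff $\sum_{n}\prob^{(n)}(x,x)=\mathbb{E}_{x}[N]=\infty$, which is (b). In the opposite case $u<1$, whence $\mathbb{P}_{x}[N=\infty]=0$ (so $x$ is transient, and in fact in the strong sense recorded before the theorem) and $\sum_{n}\prob^{(n)}(x,x)<\infty$, which is (a).

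I expect the only delicate point to be the identity $\mathbb{P}_{x}[N\ge k+1]=u^{k}$, i.e. the careful use of the strong Markov property at the iterated return times (measurability of these stopping times, independence of the post-return trajectory from the past). If one prefers to avoid this, the same conclusion follows analytically: with $F(z)=\sum_{n\ge 1}f^{(n)}(x,x)z^{n}$, where $f^{(n)}(x,x)$ is the probability of a first return to $x$ at time $n$, and $G(z)=\sum_{n\ge 0}\prob^{(n)}(x,x)z^{n}$, decomposing each first-return excursion yields the renewal identity $G(z)=(1-F(z))^{-1}$ for $0\le z<1$; letting $z\uparrow 1$ — permissible by Abel's theorem since all coefficients are nonnegative — gives $\sum_{n}\prob^{(n)}(x,x)=(1-u)^{-1}\in[1,\infty]$, and (a), (b) follow as above. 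In any case this is the classical argument, for which one may also simply cite \cite[3.4. Theorem]{Woess09}.
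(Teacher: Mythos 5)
Your argument is correct and is exactly the classical renewal/strong-Markov proof; the paper itself gives no proof of this statement but simply cites \cite[3.4.~Theorem]{Woess09}, where the argument is the generating-function identity $G(z)=(1-F(z))^{-1}$ that you record as your alternative route. Nothing to add.
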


It is known, that  being recurrent or transient is a property of an irreducible class, see e.g. \cite[Theorem 3.4 and p. 45]{Woess09}
\begin{theorem}
All the vertices of the same class are recurrent or transient simultaneously.
\end{theorem}

Therefore, the type -- recurrent or transient -- is defined for irreducible classes. The following theorem is found in \cite[3.4. Theorem (b) and (c)]{Woess09}

\begin{theorem}\label{lem:recvsess}
 Every
recurrent class is an essential class and 
every finite essential class is recurrent.
\end{theorem}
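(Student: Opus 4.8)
The plan is to establish the two assertions separately, using only the definition of recurrence, the sum criterion of Theorem~\ref{thm:defrectr}, the characterization that $x$ lies in an essential class if and only if $x\to y$ implies $y\to x$, and the fact recalled above that recurrence and transience are properties of an entire irreducible class.

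\textbf{Recurrent classes are essential.} I would argue by contraposition. Suppose the irreducible class $C$ is not essential and fix $x\in C$. By the characterization of essential classes there is a state $y$ with $x\to y$ but $y\not\to x$; in particular $y\ne x$, and $\prob^{(n)}(y,x)=0$ for every $n$, since a strictly positive entry would exhibit a directed path from $y$ to $x$. Pick a directed path from $x$ to $y$ and, passing to the portion after its last visit to $x$, write it as $x=x_0\leadsto x_1\leadsto\cdots\leadsto x_k=y$ with $x_i\ne x$ for $1\le i\le k$ and $k\ge 1$. For the random walk started at $x$, the cylinder event $E=\{X_1=x_1,\dots,X_k=x_k\}$ has probability $\prod_{i=1}^{k}\prob(x_{i-1},x_i)=:\varepsilon>0$. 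On $E$ the walk avoids $x$ at the times $1,\dots,k$ by construction, and from time $k$ on it sits at $y$ and hence, by the Markov property, almost surely never reaches $x$ because $\sum_{n}\prob^{(n)}(y,x)=0$. Thus $E$ is, up to a null set, contained in $\{X_n\ne x\text{ for all }n\ge 1\}$, so $\mathbb P[X_n=x\text{ infinitely often}\mid X_0=x]\le 1-\varepsilon<1$ and $x$ is transient. Therefore a recurrent class cannot fail to be essential.

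\textbf{Finite essential classes are recurrent.} Let $C$ be a finite essential class and fix $x\in C$. I first observe that $C$ is closed: if $x'\in C$ and $\prob(x',y)>0$ then $x'\to y$, hence $y\to x'$ by essentiality, hence $y\in C$; inductively $\supp \prob^{(n)}(x,\cdot)\subseteq C$, so $\sum_{z\in C}\prob^{(n)}(x,z)=1$ for every $n\ge 0$. Now suppose, for contradiction, that every state of $C$ is transient. For each $z\in C$ irreducibility provides $m=m(z)$ with $\prob^{(m)}(z,x)>0$, and Chapman--Kolmogorov gives $\prob^{(m+n)}(z,z)\ge \prob^{(m)}(z,x)\,\prob^{(n)}(x,z)$, i.e. $\prob^{(n)}(x,z)\le \prob^{(m)}(z,x)^{-1}\,\prob^{(m+n)}(z,z)$. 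Summing over $n$ and applying Theorem~\ref{thm:defrectr}(a) to the transient state $z$ yields $\sum_{n}\prob^{(n)}(x,z)<\infty$. Since $C$ is finite we may interchange the summation order and obtain $\sum_{n=0}^{N}\sum_{z\in C}\prob^{(n)}(x,z)\le \sum_{z\in C}\sum_{n\ge 0}\prob^{(n)}(x,z)<\infty$ for all $N$, which contradicts $\sum_{n=0}^{N}\sum_{z\in C}\prob^{(n)}(x,z)=N+1$. Hence some $z\in C$ is recurrent, and since recurrence is a property of the whole class, $C$ is recurrent.

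\textbf{Expected main difficulty.} The first implication is entirely elementary once one isolates the escape event $E$. The only point needing genuine care is the second one: finiteness of $C$ must be used twice — first to pass from transience of the individual states $z$ to summability of $\prob^{(\cdot)}(x,z)$ for the fixed $x$ via a connecting-path estimate, and then to extract the contradiction from the identity $\sum_{z\in C}\prob^{(n)}(x,z)\equiv 1$ — and one must invoke correctly that being recurrent is a class property, so that ``some $z\in C$'' upgrades to all of $C$. No deeper obstacle is anticipated; this is a classical fact and the proof merely reorganizes the standard arguments around Theorem~\ref{thm:defrectr}.
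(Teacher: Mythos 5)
Your proof is correct. Note, however, that the paper does not prove this statement at all: it is imported verbatim from \cite[3.4.~Theorem (b) and (c)]{Woess09}, so there is no in-paper argument to compare against. What you have written is essentially the standard textbook proof of that cited result: the contrapositive escape-event argument (a positive-probability path to a state $y$ with $\prob^{(n)}(y,x)=0$ for all $n$ forces the return probability below $1$) for the first assertion, and the closedness of an essential class together with the Chapman--Kolmogorov comparison $\prob^{(n)}(x,z)\le \prob^{(m)}(z,x)^{-1}\prob^{(m+n)}(z,z)$ and the identity $\sum_{z\in C}\prob^{(n)}(x,z)=1$ for the second. Both steps are carried out correctly, including the two places where finiteness of $C$ is genuinely needed and the final appeal to recurrence being a class property; I see no gap.
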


\begin{corollary}
Let  $ V $ be finite and $\prob$ be a transition matrix over $V$. A state $x\in V$  is recurrent if and only if  $x\to y$ implies $y\to x$ for any $y\in V$.
\end{corollary}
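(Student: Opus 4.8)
The plan is to recognize that the condition ``$x\to y$ implies $y\to x$ for all $y\in V$'' is precisely the characterization of $x$ lying in an essential class, as recorded in the Lemma following the Definition of essential classes. So the corollary reduces to showing that, when $V$ is finite, a state is recurrent if and only if it lies in an essential class; the two implications are then read off from Theorem~\ref{lem:recvsess}.

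First I would argue the forward direction. Suppose $x$ is recurrent. Since recurrence is a property of an irreducible class (by the theorem preceding Theorem~\ref{lem:recvsess}), the irreducible class $C$ containing $x$ is recurrent, and by the first assertion of Theorem~\ref{lem:recvsess} every recurrent class is essential. Hence $x$ lies in an essential class, and by the cited Lemma this means $x\to y$ implies $y\to x$ for every $y\in V$.

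For the converse, assume $x\to y$ implies $y\to x$ for all $y\in V$. By the Lemma characterizing essential classes, the irreducible class $C$ of $x$ is essential. Because $V$ is finite, $C\subseteq V$ is a finite essential class, so the second assertion of Theorem~\ref{lem:recvsess} gives that $C$ is recurrent; in particular $x$ is recurrent.

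I do not expect a genuine obstacle here: the statement is a direct corollary, and the only point requiring a touch of care is invoking finiteness of $V$ exactly where it is needed, namely to pass from ``essential'' to ``finite essential'' in the converse direction, since in general essential classes need not be recurrent.
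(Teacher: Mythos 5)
Your proof is correct and follows essentially the same route as the paper: identify the stated condition with membership in an essential class and then apply Theorem~\ref{lem:recvsess}, using finiteness of $V$ only in the converse direction. The only cosmetic difference is that for the forward implication the paper argues directly that a recurrent state must admit return paths, whereas you cite Theorem~\ref{lem:recvsess}(a); both are fine.
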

\begin{proof} Clearly, if $ x $	is recurrent, then there must be a path to return to $ x $ from any $ y $ which can be reached by $ x $. On the other hand,  if $x\to y$ implies $y\to x$ for all $ y\in V  $, then $x$ is in an essential class.
\end{proof}

\section{A correspondence between  non-Archimedean graphs and  random walks}\label{sec:Correspondence}
It is known that real graphs stand in a one-to-one relationship to reversible random walks. In this section we identify a class of random walks which stand in a one-to-one correspondence to graphs over non-Archimedean fields.  In turn many of the investigations for the probability theory on non-Archimedean graphs can draw from the rich theory of those random walks.

\subsection{Constructing a  random walk for a non-Archimedean graph}

In this subsection we construct a mapping between a non-Archimedean graph and a random walk. To this end recall the set $ \Kf $ of at most finite elements which is an ordered ring and which we will map to $ \mathbb{R} $ as described below. To this end, we are looking for the  unique real number, from which an element $ k\in\K $ differs by an infinitesimal. Note that $ \K $ includes $ \mathbb{Q} $ as $ \K $ is an ordered field.

\begin{lemma}\label{lem:existencerho}
	For any $k\in \Kf$, there is a unique real number $ \rho(k) $, which differs from $k$ only by an infinitesimal in an ordered field extension including $ \K $ and $ \mathbb{R} $.
\end{lemma}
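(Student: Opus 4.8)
## Proof Plan

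The plan is to construct $\rho(k)$ as a Dedekind cut in $\mathbb{R}$, using the order relation between $\K$ and its rational subfield $\mathbb{Q}$. First I would fix $k \in \Kf$; since $k$ is at most finite, there is $c_0 \in \mathbb{Q}^+$ with $|k| \prec c_0$, so the set $L_k = \{q \in \mathbb{Q} \mid q \prec k\}$ is nonempty (it contains $-c_0$) and bounded above in $\mathbb{Q}$ (by $c_0$). Since $\mathbb{Q}$ embeds in $\K$ as an ordered subfield and the order on $\K$ restricts to the usual order on $\mathbb{Q}$, the set $L_k$ together with its complement $\{q \in \mathbb{Q} \mid q \succeq k\}$ forms a genuine Dedekind cut of $\mathbb{Q}$, hence determines a unique real number, which I define to be $\rho(k)$.

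Next I would verify that $k - \rho(k)$ is an infinitesimal, working inside a common ordered field extension $\mathbb{L} \supseteq \K$ and $\mathbb{L} \supseteq \mathbb{R}$ (such an extension exists since both $\K$ and $\mathbb{R}$ extend $\mathbb{Q}$ as ordered fields, and one can amalgamate them — this is where I would cite or invoke a standard fact about ordered field extensions). The point is: for any $n \in \mathbb{N}$, by density of $\mathbb{Q}$ in $\mathbb{R}$ there are rationals $q_1, q_2$ with $\rho(k) - \tfrac{1}{n} \prec q_1 \prec \rho(k) \prec q_2 \prec \rho(k) + \tfrac{1}{n}$ in $\mathbb{R}$, and by definition of the cut, $q_1 \prec k$ and $k \preceq q_2$ in $\K$ (hence in $\mathbb{L}$). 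Combining these inequalities in $\mathbb{L}$ gives $-\tfrac1n \prec k - \rho(k) \prec \tfrac1n$ for every $n$, i.e.\ $k - \rho(k)$ is infinitesimal.

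Finally, uniqueness: if $r, r' \in \mathbb{R}$ both differ from $k$ by infinitesimals in some extension, then $r - r'$ is a difference of two infinitesimals, hence infinitesimal; but $r - r' \in \mathbb{R}$, and the only infinitesimal real number is $0$, so $r = r'$. This also shows the definition is independent of the chosen extension $\mathbb{L}$.

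The main obstacle I anticipate is the foundational point in the second paragraph: one must know that $\K$ and $\mathbb{R}$ can be jointly embedded into a single ordered field so that the phrase ``$k - \rho(k)$ is infinitesimal'' even makes sense, and that the comparison of the $\mathbb{Q}$-order with the $\K$-order and the $\mathbb{R}$-order is compatible. This is standard (any ordered field embeds into a real-closed field, and pushouts of ordered fields over a common subfield exist), but it should be stated carefully since the whole lemma is really a bookkeeping statement about how the ``standard part'' map is defined; once the ambient field is fixed, the cut construction and the infinitesimal estimate are routine.
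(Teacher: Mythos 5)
Your proposal is correct and takes essentially the same approach as the paper: both define $\rho(k)$ via the Dedekind cut of $\mathbb{Q}$ determined by $k$ (the paper takes $\sup$ of the lower set and $\inf$ of the upper set in $\mathbb{R}$ and notes they agree), work in a common ordered field extension of $\K$ and $\mathbb{R}$ (the paper cites the surreal numbers for this), and derive uniqueness from the fact that $0$ is the only infinitesimal real. Your explicit $1/n$-verification of the infinitesimal property is a slightly more detailed rendering of the same argument.
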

\begin{proof}
	Let $ \mathbb{F} $ be an ordered extension which includes $ \mathbb{K} $ and $ \mathbb{R} $, e.g. one can take the surreal numbers, which contains all non-Archimedean fields as subfields  (see \cite[Theorem 24.29]{Bajnok}).
	For $ k\in \Kf $, let
	\begin{align*}
		L(k)=\{  q\in \mathbb{Q}\mid q\preceq k\}\qquad\mbox{and}\qquad 	U(k)=\{  q\in \mathbb{Q}\mid q\succeq k\}
	\end{align*}
Then, $ L(k) $ and $ U(k) $ are also subsets of $ \mathbb{R} $ in which they have a supremum $ l=\sup_{\mathbb{R}}L(k) $ and an infimum $ u=\inf_{\mathbb{R}}U(k) $.
Since $ l $ and $u  $ both exist in $ \mathbb{F}$,  they differ from $ k $ only by an infinitesimal and are therefore equal in $ \mathbb{R} $. Then, this is the number $ \rho(k) $ we are looking for. Uniqueness is clear by  the triangle inequality in $ \mathbb{F} $.
\end{proof}

\begin{definition}[Real part]\label{def::map}
 We define the map  $\map:\Kf\to\Bbb R$ so that  $\map(k)$ is the unique real number  such that $k-\map(k)$ is infinitesimal in an ordered field extension including $ \K $ and $ \mathbb{R} $.
\end{definition}

We collect the following basic properties of the map $ \rho $.
\begin{lemma}[Basic properties of $ \rho $]\label{lem::mapProp}
	The map $ \rho $ is an order preserving  ring morphism, i.e., for  $ k_{1},k_{2}\in \Kf $, we have 
 $\map(k_1+k_2)=\map(k_1)+\map(k_2)$,  $\map(k_1\cdot k_2)=\map(k_1)\cdot \map(k_2)$ and  if $k_1\succeq k_2$, then $\map(k_1)\ge \map(k_2)$.
Furthermore, we have the following properties for  $ k_{1},k_{2}\in \Kf $.
\begin{enumerate}[label=$(\arabic*)$]
\item
$ \map $ maps infinitesimal elements to zero.
\item if $\rho(k_1)< \rho (k_2)$, then $k_1\prec k_2.$
\item if $\map(k_1)\ne 0$, then $\map(1/k_1)=1/\map(k_1)$.
\item If a sequence $(k_n)$  in  $\Kf$ converges to  $k$  in  $ \K$, then the sequence $(\rho(k_n))$ is eventually constant and equal to  $\rho(k)$ in $ \mathbb{R} $.
\end{enumerate}
\end{lemma}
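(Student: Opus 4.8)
The plan is to verify the ring-morphism and order-preserving properties first, then deduce the four numbered consequences one by one, each being a short argument built on these. Throughout I will work in a fixed ordered field extension $\mathbb{F}$ containing both $\mathbb{K}$ and $\mathbb{R}$ (e.g. the surreal numbers, as in Lemma~\ref{lem:existencerho}), so that ``infinitesimal'' has an unambiguous meaning and I may use the triangle inequality and the field axioms of $\mathbb{F}$ freely. Write $\varepsilon_i = k_i - \map(k_i)$, which is infinitesimal in $\mathbb{F}$ by Definition~\ref{def::map}, and note that a real number that is infinitesimal in $\mathbb{F}$ must be $0$ (since $\mathbb{R}$ is Archimedean), which is what forces uniqueness of $\map(k)$.

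\emph{Additivity and multiplicativity.} For the sum, $k_1 + k_2 = (\map(k_1)+\map(k_2)) + (\varepsilon_1+\varepsilon_2)$, and $\varepsilon_1 + \varepsilon_2$ is infinitesimal (the infinitesimals form a subring), so by uniqueness $\map(k_1+k_2) = \map(k_1)+\map(k_2)$. For the product, $k_1 k_2 = \map(k_1)\map(k_2) + \map(k_1)\varepsilon_2 + \map(k_2)\varepsilon_1 + \varepsilon_1\varepsilon_2$; here I must observe that $\map(k_i)$ is a fixed real, hence finite in $\mathbb{F}$, so each of the three remaining terms is infinitesimal, and again uniqueness gives $\map(k_1 k_2) = \map(k_1)\map(k_2)$. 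One small point worth stating explicitly: $\Kf$ is closed under $+$ and $\cdot$ (it is a ring, as recalled in the text), so $\map$ is defined on both $k_1+k_2$ and $k_1 k_2$. \emph{Order preservation.} Suppose $k_1 \succeq k_2$ but, for contradiction, $\map(k_1) < \map(k_2)$ in $\mathbb{R}$. Then $d := \map(k_2) - \map(k_1) > 0$ is a positive real, while $k_1 - k_2 = (\map(k_1)-\map(k_2)) + (\varepsilon_1 - \varepsilon_2) = -d + (\varepsilon_1-\varepsilon_2) \succeq 0$ forces $\varepsilon_1 - \varepsilon_2 \succeq d$, contradicting that $\varepsilon_1-\varepsilon_2$ is infinitesimal and $d$ is a positive real. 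Hence $\map(k_1) \ge \map(k_2)$.

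\emph{The four consequences.} Property $(1)$ is immediate: if $k$ is infinitesimal then $\map(k)$ is the unique real differing from $k$ by an infinitesimal, and $0$ works, so $\map(k)=0$. Property $(2)$ is the contrapositive-flavoured companion of order preservation: if $\map(k_1) < \map(k_2)$ then we cannot have $k_1 \succeq k_2$ (that would give $\map(k_1)\ge\map(k_2)$ by what we just proved), so $k_1 \prec k_2$ — using that $\mathbb{K}$ is totally ordered. Property $(3)$: if $\map(k_1)\neq 0$ then $k_1$ is not infinitesimal, so $1/k_1$ is at most finite, i.e. $1/k_1 \in \Kf$ and $\map(1/k_1)$ is defined; then by multiplicativity $\map(k_1)\cdot\map(1/k_1) = \map(k_1 \cdot (1/k_1)) = \map(1) = 1$, and dividing by the nonzero real $\map(k_1)$ gives the claim. (I should check $\map(1)=1$, which follows from $1-1=0$ being infinitesimal.) Property $(4)$: if $k_n \to k$ in the order topology of $\mathbb{K}$, then for every positive rational $q$ eventually $|k_n - k| \prec q$; since $\map$ is an order-preserving ring morphism, $|\map(k_n) - \map(k)| = \map(|k_n-k|) \le q$ for all positive rationals $q$ once $n$ is large, hence $\map(k_n) = \map(k)$ for all large $n$ — so the real sequence $(\map(k_n))$ is eventually constant equal to $\map(k)$. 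A minor subtlety here is justifying $\map(|k_n-k|) = |\map(k_n)-\map(k)|$, which follows from $\map$ commuting with the order and with subtraction (so it commutes with the absolute value defined by cases).

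I do not anticipate a genuine obstacle; the only place demanding care is being precise about \emph{where} each inequality or infinitesimality statement lives — the hypotheses $k_i \succeq k_j$ are statements in $\mathbb{K}$, the conclusions $\map(k_i) \ge \map(k_j)$ are in $\mathbb{R}$, and the bridge is always the common extension $\mathbb{F}$ together with the fact that a real infinitesimal is zero. I would make sure the write-up fixes $\mathbb{F}$ once at the start and then never again worries about which field an element sits in, treating all of $\mathbb{Q}, \mathbb{R}, \mathbb{K}$ as subfields of $\mathbb{F}$.
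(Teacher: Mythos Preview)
Your arguments for the ring-morphism properties and for $(1)$--$(3)$ are correct and are exactly the kind of routine verification the paper has in mind (the paper simply writes ``The proof is straightforward'').

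There is, however, a genuine gap in your argument for $(4)$. From ``for every positive rational $q$ eventually $|k_n-k|\prec q$'' you conclude ``$|\map(k_n)-\map(k)|\le q$ for all positive rationals $q$ once $n$ is large'', but this silently swaps the order of the quantifiers: what you actually obtain is that for each $q$ there is an $N_q$ with $|\map(k_n)-\map(k)|\le q$ for $n\ge N_q$, which only shows $\map(k_n)\to\map(k)$ in $\mathbb{R}$, not eventual constancy. (Indeed, over an Archimedean field such as $\mathbb{K}=\mathbb{R}$ your argument would go through verbatim, yet the conclusion is plainly false for $k_n=1/n$.) The fix is to exploit the full strength of convergence in the order topology: choose a positive infinitesimal $\tau\in\mathbb{K}$ and $N$ with $|k_n-k|\prec\tau$ for all $n\ge N$; then $k_n-k$ is infinitesimal for $n\ge N$, so by $(1)$ and additivity $\map(k_n)=\map(k)$ for $n\ge N$. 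This also makes transparent that $(4)$ relies on $\mathbb{K}$ being non-Archimedean, which is the standing context of the paper.
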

\begin{proof}
	The proof is straightforward.
\end{proof}
 To  relate a  random walk to a non-Archimedean graph $ b $ we recall the normalized weight $ p(x,y)=b(x,y)/b(x) $, $ x,y\in V $. Clearly, $ p $ takes values in $ \Kf $.

\begin{definition}\label{def::rhoU}
Let     $\subsV\subseteq V$. We  let $ \pi=\pi_{U}^{b} $ be the \emph{real transition matrix for $ b $}  given by
$$
\prob(x,y)=
\begin{cases}
\map(p(x,y)), &x\in \subsV,\\
1,& x\in V\setminus \subsV\mbox{ and }x=y,\\
0,& x\in V\setminus \subsV\mbox{ and }x\ne y.\\
\end{cases}
$$
\end{definition}

Note, that if $\Bbb K=\Bbb R$, then $\pi^{b}_{U}=p_{U}$ is the classical  transition matrix for graphs with boundary (or without boundary if $\subsV=V$).

\begin{lemma}\label{lem::absStates}
	Let   $\subsV\subseteq V$. Then $\pi=\pi_{U}^{b}$ is a transition matrix whose diagonal vanishes outside of the absorbing states which are exactly  the elements of $ V\setminus\subsV $.
\end{lemma}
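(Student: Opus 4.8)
The statement has two halves: first that $\pi = \pi_U^b$ is a transition matrix, and second that its diagonal vanishes exactly off $V\setminus U$, with the elements of $V\setminus U$ being absorbing. The plan is to verify the defining property $\sum_{y\in V}\pi(x,y)=1$ by splitting into the cases $x\in U$ and $x\in V\setminus U$. For $x\in V\setminus U$ the definition gives $\pi(x,x)=1$ and $\pi(x,y)=0$ otherwise, so the row sum is trivially $1$ and $x$ is absorbing by the very definition of a closed set (nothing outside $\{x\}$ is reached). For $x\in U$ we have $\pi(x,y)=\rho(p(x,y))$; here I would use that $\rho$ is a ring morphism (Lemma~\ref{lem::mapProp}) together with the fact that it respects the (finite, by local finiteness) sum, so that
$$
\sum_{y\in V}\pi(x,y)=\sum_{y\in V}\rho(p(x,y))=\rho\Bigl(\sum_{y\in V}p(x,y)\Bigr)=\rho(1)=1,
$$
using $\sum_{y}p(x,y)=\sum_y b(x,y)/b(x)=1$ in $\K$ and $\rho(1)=1$. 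One should also note $\pi(x,y)=\rho(p(x,y))\ge 0$ since $p(x,y)\succeq 0$ and $\rho$ is order preserving, so $\pi$ indeed takes values in $\mathbb{R}^+\cup\{0\}$.

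For the diagonal claim, for $x\in U$ we have $\pi(x,x)=\rho(p(x,x))=\rho(b(x,x)/b(x))=\rho(0)=0$ because the graph $b$ has vanishing diagonal; hence the diagonal of $\pi$ vanishes on all of $U$. Conversely, for $x\in V\setminus U$ we have $\pi(x,x)=1\ne 0$, and the row of $\pi$ at such an $x$ has a single nonzero entry equal to $1$ on the diagonal, which is precisely the definition of an absorbing state. So the set of states with nonvanishing diagonal is exactly $V\setminus U$, and these are exactly the absorbing states.

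The only mild subtlety — and the one place where I would be slightly careful — is the interchange $\sum_y \rho(p(x,y)) = \rho(\sum_y p(x,y))$: Lemma~\ref{lem::mapProp} states additivity of $\rho$ only for two summands, so I would invoke local finiteness of $b$ (so the sum $\sum_{y} p(x,y)$ is finite) and extend additivity to finite sums by an obvious induction. Everything else is a direct unwinding of Definitions~\ref{def::rhoU} and the definitions of transition matrix, closed set, and absorbing state; no real obstacle arises.
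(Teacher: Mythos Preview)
Your proposal is correct and follows essentially the same approach as the paper's own proof: both split into the cases $x\in U$ and $x\in V\setminus U$, use local finiteness together with the additivity and order-preservation of $\rho$ to verify the row sums, and check the diagonal via $b(x,x)=0$. Your explicit remark about extending additivity of $\rho$ to finite sums by induction is a point the paper leaves implicit.
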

\begin{proof}
	For every $x\in \subsV$, we obtain $0\le \rho(p(x,y))\le 1$ by the properties of the mapping $\map$, since $0\preceq p(x,y)\preceq 1$. For $ x\in \subsV $, we have  by  linearity and local finiteness of $\map$ that
	$$
	\sum_{y\in V} \prob(x,y)=\sum_{y\in V} \map(p(x,y))=\map\left(\sum_{y\in V} p(x,y)\right)=\map(1)=1.
	$$
	Hence, $ \pi $ is a transition matrix. Moreover, for $ x\in \subsV $, we have
	$\prob(x,x)=\rho(p(x,x))=\rho(b(x,x)/b(x))=\rho(0)=0$ since $ b $ vanishes on the diagonal. Finally,  for $x\in V\setminus \subsV$, we have $\prob(x,x)=1$ by definition of $\pi$ which finishes the proof. 
\end{proof}

The lemma shows that we are indeed dealing with a transition matrix which gives rise to a random walk. 
This  allows us to give a  definition of whether a vertex in a weighted non-Archimedean graph is recurrent or transient.

\begin{definition}[Recurrent and transient vertices]
	Let $b$ be a weighted graph over $ V $,    $\subsV\subseteq V$ and $ \pi=\pi_{U}^{b} $ be the corresponding transition matrix. We call a vertex $ x\in U $ \emph{recurrent} (respectively \emph{transient}) with respect to $ U $ if $ x $ is a  recurrent (respectively transient) state with respect to $ \pi $.
\end{definition}

\subsection{Strongly connected and essential components}

We start by exploring some properties of the transition matrix $ \pi $ arising from a weighted non-Archimedean graph. Our aim is to relate directed paths with respect to $ \pi $ to specific  paths with respect to $ b $. This will lead us to our first criterion on recurrence and transience, Theorem~\ref{thm:rectranscomponent} dependent on the type of component vertices are found in.

\begin{lemma}\label{lemma::pxygec}
Let $b$ be a  graph over $V$, $\subsV\subseteq V$ and $ \prob=\prob_\subsV^b$ be the corresponding transition matrix. Then $\prob(x,y)>0$ (i.e., $x\leadsto y$) if and only if  one of the following condition holds:
\begin{enumerate}[label=$(\alph*)$]
\item
$x\in V\setminus \subsV$ and $x=y$,
\item
$x\in \subsV$ and $p(x,y)\simeq 1$.
\end{enumerate}
In particular, for $x,y\in \subsV$, we have $\prob(x,y)>0$ if and only if $p_\subsV(x,y)\simeq 1$. Moreover, if  $b(x,y)\pp b(x)$ for  $x,y\in V$, $x\ne y$, then
$
\prob(x,y)=0.
$
\end{lemma}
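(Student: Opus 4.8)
The plan is to unwind the definition of $\pi=\pi_U^b$ from Definition~\ref{def::rhoU} and translate the condition $\pi(x,y)>0$ into a statement about the real part $\rho(p(x,y))$, then use the basic properties of $\rho$ from Lemma~\ref{lem::mapProp}. First I would dispose of the case $x\in V\setminus U$: here $\pi(x,y)$ is $1$ if $x=y$ and $0$ otherwise by definition, which is exactly alternative (a), so $\pi(x,y)>0 \iff x=y$. The substance is the case $x\in U$, where $\pi(x,y)=\rho(p(x,y))$ and $p(x,y)\in\Kf$ with $0\preceq p(x,y)\preceq 1$. Since $\rho$ is order preserving and maps $1$ to $1$ and infinitesimals to $0$, we have $\rho(p(x,y))>0$ precisely when $p(x,y)$ is \emph{not} infinitesimal, i.e.\ when $p(x,y)\not\pp 1$. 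Because $p(x,y)\precsim 1$ always holds (values lie in $\Kf$, in fact are $\preceq 1$), the negation of $p(x,y)\pp 1$ together with $p(x,y)\succ 0$ is exactly $p(x,y)\simeq 1$. Concretely: if $p(x,y)\simeq 1$ then $\rho(p(x,y))\ge c_1>0$ for some positive rational $c_1$, so $\pi(x,y)>0$; conversely if $\pi(x,y)=\rho(p(x,y))>0$ then $p(x,y)$ cannot be infinitesimal (else $\rho(p(x,y))=0$ by Lemma~\ref{lem::mapProp}(1)) and cannot be $0$, hence $c_1\prec p(x,y)\prec c_2$ for suitable positive rationals, i.e.\ $p(x,y)\simeq 1$. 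This proves the equivalence of $\pi(x,y)>0$ with (a) or (b).

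For the ``in particular'' statement, I would restrict to $x,y\in U$. Here (a) is impossible, so $\pi(x,y)>0 \iff p(x,y)\simeq 1$; it remains only to note that $p_U(x,y)=p(x,y)$ for $x,y\in U$, since the restricted transition operator has matrix elements $p_U(x,y)=\pi_U P\iota_U$ agreeing with $p(x,y)$ on $U\times U$ (the restriction only truncates the domain, not the individual entries). Hence $\pi(x,y)>0 \iff p_U(x,y)\simeq 1$.

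Finally, the last assertion: if $x\ne y$ and $b(x,y)\pp b(x)$, then $p(x,y)=b(x,y)/b(x)$ is by definition infinitesimal, so by Lemma~\ref{lem::mapProp}(1) we get $\rho(p(x,y))=0$; since $x\ne y$, neither case (a) nor (b) applies (in case (b), $p(x,y)\simeq 1$ would contradict $p(x,y)\pp 1$), so $\pi(x,y)=0$. If $x\in V\setminus U$ this is immediate from the definition as well. I do not expect a genuine obstacle here; the only point requiring a little care is making sure the trichotomy ``$p(x,y)=0$, or $p(x,y)\pp 1$, or $p(x,y)\simeq 1$'' is exhaustive for entries of $p$, which follows from $0\preceq p(x,y)\preceq 1$ — and that the claimed identity $p_U(x,y)=p(x,y)$ on $U\times U$ is read off correctly from the definition of $A_U=\pi_U A\iota_U$ rather than from the row-sum-normalized picture, since row sums over $U$ need not be $1$.
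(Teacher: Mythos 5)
Your proposal is correct and follows essentially the same route as the paper's proof: both unwind Definition~\ref{def::rhoU}, split on whether $x\in \subsV$, and use that $\rho$ is order preserving and kills infinitesimals to identify $\rho(p(x,y))>0$ with $p(x,y)\simeq 1$. The only difference is cosmetic organization (you dispose of the absorbing-state case up front, the paper argues the two implications of the equivalence separately), and your explicit remarks on the trichotomy for $0\preceq p(x,y)\preceq 1$ and on $p_\subsV=p$ on $\subsV\times\subsV$ are points the paper leaves implicit.
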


\begin{proof}  If (a) holds then $ \pi(x,x)=1 $ by definition. If (b) holds then  $p(x,y)\succeq c$ for some $c\in \Bbb Q^+$. Hence, $ \pi(x,y) \ge c $ since $ \rho $ is order preserving.\\
 Assume $ \pi(x,y)>0 $ and (a) does not hold. If $ x=y $ and $ x\in U $, then $ p(x,y)=0 $ and, thus, $ \pi(x,y)=0 $. If $ x\neq y $ and $ x\in V\setminus U $, then $ \pi(x,y)=0 $ by definition. Hence, $ x\neq y $ and $ x\in U $.  Finally, from $ 0<\pi(x,y)=\rho(p(x,y)) $, we infer that $ p(x,y)\simeq 1 $. This shows the equivalence.
 
The first ``in particular'' statement is  immediate and the second is also clear as $ p(x,y)=b(x,y)/b(x) $ is infinitesimal in this case.
\end{proof}

Recall that we denote $x\to y$ for two vertices if there is a directed path from $ x $ to $ y $ with respect to a transition matrix.

\begin{lemma}\label{lemma:xtoypn}
Let  a  graph $b$  over $V$, a subset $\subsV\subseteq V$ and $\pi=\pi_{U}^{b}$ be given. For two vertices $x,y\in U $, we have $x\to y$     if and only if there exist $n\in \Bbb N_0$ such that $p^{(n)}_\subsV(x,y)\simeq~1$.
\end{lemma}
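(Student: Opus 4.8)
The statement to prove is Lemma~\ref{lemma:xtoypn}: for $x,y \in U$, we have $x \to y$ (a directed path with respect to $\pi = \pi_U^b$) if and only if there exists $n \in \mathbb{N}_0$ with $p^{(n)}_U(x,y) \simeq 1$.

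The plan is to unwind both sides in terms of single-step data and then connect them via Lemma~\ref{lemma::pxygec}. First I would handle the case $x = y$, where the statement is trivial: $x \to x$ always holds by definition of $\to$, and $p^{(0)}_U(x,x) = 1 \simeq 1$. So assume $x \neq y$.

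For the forward direction, suppose $x \to y$. Then there is a directed path $x = x_0 \leadsto x_1 \leadsto \cdots \leadsto x_n = y$ with $\pi(x_i, x_{i+1}) > 0$ for each $i$. Since the vertices $x_0 = x$ and $x_n = y$ lie in $U$, but intermediate vertices could a priori be absorbing states in $V \setminus U$, I first argue that the entire path can be taken inside $U$: by Lemma~\ref{lemma::pxygec}(a), once the walk enters $V \setminus U$ it can only loop at that vertex (those are absorbing states), so it can never return to $y \in U$; hence all $x_i \in U$. Now for each step, Lemma~\ref{lemma::pxygec}(b) (the ``in particular'' clause) gives $p_U(x_i, x_{i+1}) \simeq 1$. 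Since $\simeq$ is an equivalence relation compatible with products (each ratio is squeezed between positive rationals, so the product of finitely many such ratios is again squeezed between positive rationals), the product $p(x_0,x_1)p(x_1,x_2)\cdots p(x_{n-1},x_n) \simeq 1$. This single product is one term in the sum defining $p^{(n)}_U(x,y)$, and since all terms are $\succeq 0$ (the normalized weights are nonnegative and $U$-restricted powers are sums of nonnegative products, cf. the formula for $p^{(n)}_U$), we get $p^{(n)}_U(x,y) \succeq p(x_0,x_1)\cdots p(x_{n-1},x_n) \simeq 1$. Combined with the upper bound $\sum_{z \in U} p(x,z) \preceq 1$ iterated — or more directly, $p^{(n)}_U(x,y) \preceq 1$ since it is dominated by the corresponding entry of $P^n$ which has row sums $1$ — we conclude $p^{(n)}_U(x,y) \simeq 1$.

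For the converse, suppose $p^{(n)}_U(x,y) \simeq 1$ for some $n \ge 1$. Write $p^{(n)}_U(x,y) = \sum p(x,x_1)p(x_1,x_2)\cdots p(x_{n-1},y)$ over tuples $(x_1,\dots,x_{n-1}) \in U^{n-1}$. Since the left side is not infinitesimal, at least one summand is not infinitesimal (a finite sum of infinitesimals is infinitesimal). For that summand, each factor $p(x_{i},x_{i+1})$ must be $\simeq 1$: indeed each factor lies in $\Kf$ with $p(x_i,x_{i+1}) \preceq 1$, and if any factor were infinitesimal ($\pp 1$), the whole product would be infinitesimal (product of something $\preceq 1$ with something $\pp 1$ is $\pp 1$). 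So $p(x_i, x_{i+1}) \simeq 1$, hence $p_U(x_i,x_{i+1}) \simeq 1$ as well (since these are intermediate steps within $U$, we may view them as $p_U$; one should check $p(x_i,x_{i+1}) = p_U(x_i, x_{i+1})$ whenever the argument stays in $U$, which is just the definition of the restricted operator's matrix entries). By Lemma~\ref{lemma::pxygec}(b) this means $\pi(x_i, x_{i+1}) > 0$, i.e., $x = x_0 \leadsto x_1 \leadsto \cdots \leadsto x_n = y$ is a directed path, so $x \to y$.

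The main obstacle is the bookkeeping around vertices in $V \setminus U$ and the interplay between $p$, $p_U$, and the $\simeq$ relation: one must be careful that the ``in particular, for $x,y \in U$'' equivalence in Lemma~\ref{lemma::pxygec} refers to $p_U$, while the expansion of $p^{(n)}_U$ naturally produces factors of $p$ (not $p_U$), and that these agree on arguments within $U$. Once that is pinned down, the rest is the routine observation that $\simeq$ behaves well under finite products and that a finite sum is non-infinitesimal iff one of its nonnegative summands is.
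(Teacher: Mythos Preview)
Your proof is correct and follows essentially the same route as the paper: handle $x=y$ trivially, for the forward direction argue the path stays in $U$ (absorbing states cannot reach $y\in U$) and multiply the lower bounds from Lemma~\ref{lemma::pxygec}(b), and for the converse observe that a non-infinitesimal sum of nonnegative terms bounded by $1$ must have a non-infinitesimal summand, forcing each factor in that summand to be $\simeq 1$. The one place to be more explicit is the parenthetical ``a finite sum of infinitesimals is infinitesimal'': the index set $U^{n-1}$ may be infinite, and it is local finiteness (a standing assumption) that guarantees only finitely many summands are nonzero---the paper invokes this explicitly at exactly this step.
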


\begin{proof}
If $x=y$, then $p^{(0)}_\subsV(x,x)=1$ and $x\to x$ by definition, so there is nothing to prove. 

So, let $x\ne y$. Note that $p^{(n)}_\subsV(x,y)\simeq 1$ is equivalent to the existence of $c\in \Bbb Q^+$ such that $p^{(n)}_\subsV(x,y)\succ~c$.

Let $x\to y$. Then there exists a directed path between $x$ and $y$
\begin{equation*}
x=x_0\leadsto x_1\leadsto x_2\leadsto \dots \leadsto x_n=y,
\end{equation*}
and we can assume that $x_i\ne x_j$ for $ i\neq j $ on the path. Then by (b) of Lemma \ref{lemma::pxygec} and since $y\in \subsV$ we obtain that $x_0,\ldots, x_{n}\in \subsV$ and that there are $c_0,\ldots,c_{n-1}\in \Bbb Q^+$ such that $p_\subsV(x_{i}, x_{i+1})=p(x_{i}, x_{i+1})\succ c_i$ for $ i=0,\ldots,n-1 $. Therefore, we obtain $p^{(n)}_\subsV(x,y)\simeq~1$ since
$$
p_\subsV^{(n)}(x,y)\succeq p(x,x_1)p(x_1,x_2)\dots p(x_{n-1},y)\succ c_0\cdot c_1\dots c_{n-1} =:c.
$$

Assume now $p^{(n)}_\subsV(x,y)\simeq~1$, i.e., $p_\subsV^{(n)}(x,y)\succ c\in \Bbb Q^+$.  Assume, that for a fixed $ n $ all  paths  $x
=x_{0}\sim \ldots\sim x_{n}=y$ in $\subsV$ with respect to $ b $
one has $$ p_\subsV(x,x_1)p_\subsV(x_1,x_2)\dots p_\subsV(x_{n-1},y)\pp 1 .$$
Then, due to the local finiteness  there exists an infinitesimal $\tau\in \K^+$ such that for any such path we have 
$$
 p_\subsV(x,x_1)p_\subsV(x_1,x_2)\dots p_\subsV(x_{n-1},y) \preceq \tau
$$
and consequently
$$
p_\subsV^{(n)}(x,y)\preceq \#\{\mbox{undirected paths from $x$ to $y$}\}\tau
$$
is infinitesimal, which is a contradiction. Therefore, there exists an undirected path $x_0\sim\ldots\sim x_{n-1}$ in $ \subsV $ from $x$ to $y$ 
such that
$$
p_\subsV(x,x_1)p_\subsV(x_1,x_2)\dots p_\subsV(x_{n-1},y)\succ c'\in \Bbb Q^+.
$$
Now, since all the factors are smaller than $1$, any of those factors can be bounded from below with a positive rational number. Hence,
$ x=x_0\leadsto x_1\leadsto  x_2\leadsto \dots\leadsto  x_n=y $
 which gives $x\to y$.
\end{proof}
The next theorem relates paths for $ \pi $ to  paths for $ b $ with certain additional properties.

\begin{theorem}\label{thm::bxby}
Let $b$ be a  graph over $V$, $\subsV\subseteq V$, $x,y\in \subsV$ and $\pi=\pi_\subsV^{b}$. Then the following holds:
\begin{enumerate}[label=${(\alph*)}$]
\item We have 
$x\to y$ if and only if there exists a path $$ x=x_0\sim x_1\sim\dots \sim x_n=y $$ in  $\subsV$ (with respect to $ b $) such that for all $ i=0,\ldots,n-1 $
 $$ b(x_i,x_{i+1})\simeq b(x_i). $$
In particular, in this case 
${b(x_{i+1})\succsim b(x_i)}$, $ i=0,\ldots,n-1$
and ${b(y)}\succsim{b(x)}$.
Furthermore, in this case this path is also a directed path (with respect to $ \pi $)
$$
x=x_0\leadsto x_1\leadsto\dots \leadsto x_n=y
$$
and any such directed path is a path with respect to $ b $ with the properties above. 
\item
Let $x\to y$. Then $y\to x$ if and only if  $b(x)\simeq b(y).$

\item
 We have  $x\leftrightarrow y$ if and only if $b(x)\simeq b(y)$ and there exists a path  $$ x=x_0\sim x_1\sim\dots \sim x_n=y $$ in  $\subsV$  such that for all $ i=0,\ldots,n-1 $
with $$ b(x_i,x_{i+1})\simeq b(x). $$ 
In particular, this path is also a directed path form  $ x $ to $ y $ and from $ y $ to $ x $.
\end{enumerate}
\end{theorem}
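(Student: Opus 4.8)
I would treat the statement as three linked claims and prove part~(a) first, since (b) and (c) will follow from it with little extra work. The two engines are Lemma~\ref{lemma::pxygec} --- which for $x\in\subsV$ equates $x\leadsto y$ with $b(x,y)\simeq b(x)$ --- and Lemma~\ref{lemma:xtoypn} --- which equates $x\to y$ with $p^{(n)}_\subsV(x,y)\simeq 1$ for some $n$. A preliminary observation I will use throughout: a directed path joining two vertices of $\subsV$ must lie entirely in $\subsV$, since by Lemma~\ref{lem::absStates} the vertices of $V\setminus\subsV$ are absorbing, so a directed path entering $V\setminus\subsV$ would be trapped there and could never reach $y\in\subsV$.

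For part~(a): given $x\to y$, I would take a directed path $x=x_0\leadsto\dots\leadsto x_n=y$ (which lies in $\subsV$ by the preliminary observation) and read off from Lemma~\ref{lemma::pxygec} that $p(x_i,x_{i+1})\simeq 1$, equivalently $b(x_i,x_{i+1})\simeq b(x_i)$, for each $i$; since these values are nonzero, the path is simultaneously a $b$-path. Conversely, a $b$-path in $\subsV$ with $b(x_i,x_{i+1})\simeq b(x_i)$ has every step a $\pi$-edge by the same lemma, so it is a directed path and $x\to y$. The monotonicity then drops out of $b(x_{i+1})\succeq b(x_{i+1},x_i)=b(x_i,x_{i+1})\simeq b(x_i)$, which gives $b(x_{i+1})\succsim b(x_i)$, and chaining yields $b(y)\succsim b(x)$; the two ``in particular'' clauses about directed paths are exactly this equivalence read in both directions. (If one prefers, the $b$-path in the forward direction can instead be extracted from $p^{(n)}_\subsV(x,y)\simeq 1$ by the same local-finiteness argument used in the proof of Lemma~\ref{lemma:xtoypn}.)

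For part~(b), assume $x\to y$. If also $y\to x$, then applying the monotonicity of~(a) along a path from $x$ to $y$ and along one from $y$ to $x$ gives $b(y)\succsim b(x)$ and $b(x)\succsim b(y)$, hence $b(x)\simeq b(y)$. For the converse, if $b(x)\simeq b(y)$ I would take the path $x=x_0\sim\dots\sim x_n=y$ supplied by~(a), note that $b(x)=b(x_0)\precsim b(x_1)\precsim\dots\precsim b(x_n)=b(y)\simeq b(x)$ forces $b(x_i)\simeq b(x)$ for every $i$, and conclude $b(x_{i+1},x_i)=b(x_i,x_{i+1})\simeq b(x_i)\simeq b(x_{i+1})$; hence $x_{i+1}\leadsto x_i$ by Lemma~\ref{lemma::pxygec}, and reading the path backwards gives $y\to x$.

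Part~(c) then follows by combining~(a) and~(b): $x\leftrightarrow y$ is $x\to y$ together with $b(x)\simeq b(y)$ by part~(b), which via part~(a) and the computation in~(b) yields a path in $\subsV$ along which all the $b(x_i)$, and hence all the edge weights $b(x_i,x_{i+1})$, are $\simeq b(x)$, and which is a directed path in both directions; conversely any such path witnesses $x\to y$ by~(a), and $b(x)\simeq b(y)$ then gives $y\to x$ by~(b). I expect the only real obstacle to be the forward half of part~(a): one must justify carefully that the directed path realizing $x\to y$ can be taken inside $\subsV$ (the absorbing-state observation) and that one may pass freely between ``$p(x,y)\simeq 1$'' and ``$b(x,y)\simeq b(x)$'' (via $p(x,y)=b(x,y)/b(x)$ and the two-sided nature of $\simeq$). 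Everything after that --- the monotonicity $b(x_{i+1})\succsim b(x_i)$ and the reversibility argument for~(b) and~(c) --- is routine bookkeeping.
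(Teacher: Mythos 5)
Your proposal is correct, and it follows the paper's overall skeleton (prove (a) from Lemma~\ref{lemma::pxygec} and Lemma~\ref{lemma:xtoypn}, then derive (b) and (c)), but it deviates in two places worth noting. For the forward direction of (a), the paper first invokes Lemma~\ref{lemma:xtoypn} to get $p^{(n)}_\subsV(x,y)\simeq 1$ and then re-runs the local-finiteness argument to extract a single path whose product of transition weights is non-infinitesimal; you instead take a directed path realizing $x\to y$, justify via the absorbing states of Lemma~\ref{lem::absStates} that it stays in $\subsV$, and apply Lemma~\ref{lemma::pxygec} edge by edge. Your route is more direct and avoids duplicating the extraction argument that is already inside the proof of Lemma~\ref{lemma:xtoypn}; both are valid. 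For (b), the paper argues through the reversibility identity $b(y)\,p^{(n)}_\subsV(y,x)=b(x)\,p^{(n)}_\subsV(x,y)$, so that a rational lower bound on $p^{(n)}_\subsV(y,x)$ is traded against a bound on $b(y)/b(x)$; you instead get the forward implication from the monotonicity $b(y)\succsim b(x)$ applied in both directions, and the converse by observing that $b(x)\simeq b(y)$ squeezes all intermediate $b(x_i)$ to be $\simeq b(x)$, whence each edge can be traversed backwards by Lemma~\ref{lemma::pxygec}. This path-reversal argument is exactly what the paper isolates afterwards as Corollary~\ref{cor310}, so you are in effect proving that corollary inside (b); the paper's identity-based argument is slicker but yours is more elementary and self-contained. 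Your treatment of (c) and of the ``in particular'' monotonicity statements matches the paper.
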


\begin{proof}
Observe that if $ v,w\in U $, then $ p_{U}(v,w)=p(v,w) $ which we will apply throughout the proof.

(a)
Assume $x\to y$. By Lemma \ref{lemma:xtoypn} $p_\subsV^{(n)}(x,y)\simeq 1$ for some $n\in \Bbb N$, i.e,. there exist $n\in ~\Bbb N, c\in~\Bbb Q^+$ such that $p_\subsV^{(n)}(x,y)\succ c$. Then there exists a path 
$
x=x_0\sim x_1\sim\dots \sim x_n=y
$
with $p_\subsV(x,x_1)\dots p_\subsV(x_{n-1},y)\succ c'$ for some $c'\in \Bbb Q^+$ (if all the paths would be infinitesimal, then $p_\subsV^{(n)}$ would be infinitesimal). 
Since $p_\subsV\preceq 1$, we have 
$$
1\succeq\dfrac{b(x_i,x_{i+1})}{b(x_i)}=p(x_i,x_{i+1})=p_\subsV(x_i,x_{i+1})\succ c'
$$
and we immediately get $b(x_i,x_{i+1})\simeq b(x_i)$ for  $ i=0,\ldots,n-1$. 

For the other direction, assume there exists a  path $
(x_1,\ldots,x_{n})$ as described. By finiteness of the  path,  there exist $c\in \Bbb Q^+$ such that
$$
1\succeq p(x_i,x_{i+1})=p_\subsV(x_i,x_{i+1})=\dfrac{b(x_i,x_{i+1})}{b(x_i)}\succ c.
$$
Thus, we have $x_i\leadsto x_{i+1}$ due to Lemma \ref{lemma::pxygec} and, hence, $x\to y$.

The ``in particular'' statement follows from the fact, that $b(x_{i+1})\succeq b(x_{i},x_{i+1})$ for all $ i =0,\ldots,n-1$ and transitivity of $ \succeq $. The final statement of (a) was shown along the proof.\medskip

(b) 
Let $x\to y$. 
Then,  by Lemma  \ref{lemma:xtoypn}, $ x\to y $ implies the existence of $c\in \Bbb Q^+$ such that $p_\subsV^{(n)}(x,y)\succ c$.
Since, 
$$
p_\subsV^{(n)}(y, x)=\dfrac{b(y)}{b(x)}p_\subsV^{(n)}(x,y)
$$
a lower bound by a positive rational on $ p_\subsV^{(n)}(y, x) $ is equivalent to a corresponding lower bound on ${b(y)}/{b(x)} $ (as $ p_{U}\succeq 1 $).  On the other hand, the ``in particular'' statement also yields an upper bound by a  rational on ${b(y)}/{b(x)} $ in the case $ y\to x $.\medskip

(c) This follows directly from $(a)$ and $(b)$.
\end{proof}

{\begin{corollary}\label{cor310}
If $x\leftrightarrow y$ and $(x_0,x_{1},\ldots,x_{n})$ is a directed path from $x$ to $y$, then $(x_n,x_{n-1},\ldots,x_{0})$ is a directed path from $y$ to $x$, i.e.,
$
y=x_n\sim x_{n-1}\sim\dots \sim x_0=x.
$
\end{corollary}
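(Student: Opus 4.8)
The plan is to use Theorem~\ref{thm::bxby}(a) to control the vertex weights $b(x_i)$ along the given directed path and then read the reversed edges off from Lemma~\ref{lemma::pxygec}.

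First I would record that, by the last part of Theorem~\ref{thm::bxby}(a), the given directed path $x=x_0\leadsto\cdots\leadsto x_n=y$ is a path with respect to $b$ lying entirely in $\subsV$ (a vertex outside $\subsV$ is absorbing by Lemma~\ref{lem::absStates}, hence cannot occur strictly before $x_n=y\in\subsV$), and it satisfies $b(x_i,x_{i+1})\simeq b(x_i)$ together with the chain $b(x)=b(x_0)\precsim b(x_1)\precsim\cdots\precsim b(x_n)=b(y)$. Since $x\leftrightarrow y$, Theorem~\ref{thm::bxby}(b) gives $b(x)\simeq b(y)$, so this chain of $\precsim$'s collapses: $b(x_i)\simeq b(x)$, and in particular $b(x_i)\simeq b(x_{i+1})$, for every $i$.

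Then, for each $i=0,\ldots,n-1$, symmetry of $b$ and the previous step yield
$$ \prob(x_{i+1},x_i)=0 \iff p(x_{i+1},x_i)=\frac{b(x_{i+1},x_i)}{b(x_{i+1})}=\frac{b(x_i,x_{i+1})}{b(x_{i+1})}\simeq\frac{b(x_i)}{b(x_{i+1})}\simeq 1, $$
so the ``in particular'' statement of Lemma~\ref{lemma::pxygec} (applicable since $x_i,x_{i+1}\in\subsV$) shows $\prob(x_{i+1},x_i)>0$, i.e.\ $x_{i+1}\leadsto x_i$. As this holds for every $i$, the reversed sequence $(x_n,x_{n-1},\ldots,x_0)$ is a directed path from $y$ to $x$, which is what we want.

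The only genuinely delicate point I expect is the weight-squeezing step: one must combine the monotonicity $b(x_{i+1})\succsim b(x_i)$ coming out of Theorem~\ref{thm::bxby}(a) with the equivalence $b(x)\simeq b(y)$ from part~(b) to force every intermediate weight into the single $\simeq$-class of $b(x)$. Once that is in hand, the reversal is a one-line computation with $\simeq$ and Lemma~\ref{lemma::pxygec}.
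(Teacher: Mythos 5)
Your proof is correct and follows essentially the same route as the paper, which likewise deduces the reversibility of the path from the proof of Theorem~\ref{thm::bxby}(a) combined with $b(x)\simeq b(y)$ from part~(b); you merely spell out the squeezing of the intermediate weights into the $\simeq$-class of $b(x)$ and the final appeal to Lemma~\ref{lemma::pxygec}. (One typo: the displayed formula should assert $\prob(x_{i+1},x_i)>0$, not $\prob(x_{i+1},x_i)=0$, as your following sentence makes clear.)
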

\begin{proof}
This follows from proof of $(a)$ of Theorem \ref{thm::bxby}, since  $b(x)\simeq b(y)$ by $(b)$.
\end{proof}}

These relations between a graph $ b $ and its transition matrix $ \pi $ give rise to the following definition.

\begin{definition}[Strongly connected components]\label{def::StronglyCCGr}
Let $b$ be a  graph over $V$ and $\subsV\subseteq V$.  A \emph{strongly connected component with respect to $U$} is a maximal subset $C\subseteq U$  (with respect to  inclusion) such that for any $x,y\in C$ 
there exists a path $
x=x_0\sim x_1\sim\dots \sim x_n=y
$ in $ U $ such that
 for all $i={0,\ldots, n-1}$ $$ b(x)\simeq b(x_i,x_{i+1})\simeq b(y). $$
\end{definition}
 The following corollary is an immediate consequence of Theorem \ref{thm::bxby}.
\begin{corollary}\label{Corollary:classesb(x)}Let $b$ be a  graph over $V$ and $\subsV\subseteq V$. 
The strongly connected components of $ \pi_{U}^{b}$, which are not absorbing states, are exactly the strongly connected components of $b$ restricted to  $\subsV$.
\end{corollary}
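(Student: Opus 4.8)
The plan is to read this off directly from Theorem~\ref{thm::bxby}, which already translates the relation $\leftrightarrow$ for $\pi_{U}^{b}$ into the graph-theoretic condition used in Definition~\ref{def::StronglyCCGr}. First I would pin down the absorbing states: by Lemma~\ref{lem::absStates} they are exactly the vertices of $V\setminus U$, and each such vertex forms a singleton irreducible class of $\pi_{U}^{b}$. Hence every irreducible class of $\pi_{U}^{b}$ that is not an absorbing state is contained in $U$, so Theorem~\ref{thm::bxby} applies to any two of its elements.

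Next I would compare the two equivalence relations on $U$. For $x,y\in U$, Theorem~\ref{thm::bxby}(c) says that $x\leftrightarrow y$ (with respect to $\pi_{U}^{b}$) holds if and only if $b(x)\simeq b(y)$ and there is a path $x=x_0\sim\dots\sim x_n=y$ in $U$ with $b(x_i,x_{i+1})\simeq b(x)$ for all $i$. Since $\simeq$ is an equivalence relation, under the hypothesis $b(x)\simeq b(y)$ the requirement $b(x_i,x_{i+1})\simeq b(x)$ is the same as $b(x)\simeq b(x_i,x_{i+1})\simeq b(y)$, which is precisely the defining condition in Definition~\ref{def::StronglyCCGr}. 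Thus the restriction of $\leftrightarrow$ to $U$ coincides with the relation ``there is a path joining $x$ and $y$ in $U$ all of whose edges satisfy $b(x)\simeq b(x_i,x_{i+1})\simeq b(y)$''.

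Finally I would conclude that the two partitions of $U$ agree. The irreducible classes of $\pi_{U}^{b}$ are the equivalence classes of $\leftrightarrow$ (this is genuinely an equivalence relation by transitivity of $\to$), hence the maximal subsets of $V$ on which $\leftrightarrow$ holds throughout; restricted to $U$, by the previous paragraph these are exactly the maximal subsets occurring in Definition~\ref{def::StronglyCCGr}, with singletons $\{x\}$, $x\in U$, handled consistently on both sides by the length-zero path. Together with the identification of the absorbing states this proves the corollary. The only point needing care is the small logical bookkeeping between the two equivalent formulations of the $\simeq$-conditions and the observation that Definition~\ref{def::StronglyCCGr} really does describe a partition of $U$; both become transparent once $\leftrightarrow$ is recognized as the underlying equivalence relation, so I would not expect a genuine obstacle beyond this.
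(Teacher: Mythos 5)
Your proposal is correct and follows exactly the route the paper intends: the paper simply declares the corollary an immediate consequence of Theorem~\ref{thm::bxby}, and your argument spells out that consequence, using Lemma~\ref{lem::absStates} to isolate the absorbing states and Theorem~\ref{thm::bxby}(c) to match the relation $\leftrightarrow$ on $U$ with the path condition of Definition~\ref{def::StronglyCCGr}. No gaps.
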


Furthermore, we can now define the notion of an essential or absorbing component.
\begin{definition}[Essential components]\label{def::StronglyCCGr}
	Let $b$ be a  graph over $V$ and $\subsV\subseteq V$.  A {strongly connected component $ C $ with respect to $U$} is called \emph{essential} (or \emph{absorbing}) if for every $ x\in C $ and every path $
	x=x_0\sim x_1\sim\dots \sim x_n=y
	$  in $ U $ such that
	for all $i={0,\ldots, n-1}$  we have $$  b(x_i,x_{i+1})\simeq b(x_{i}), $$
	we also have
	$$ b(x)\simeq b(y). $$
\end{definition}

The next corollary shows that essential components are exactly  the essential classes of the corresponding random walk. 

\begin{corollary}\label{Corollary:essentialclasses}Let $b$ be a  graph over $V$ and $\subsV\subseteq V$. 
	The essential components of $ \pi_{U}^{b}$, which are not absorbing states, are exactly the strongly connected components of $b$ restricted to  $\subsV$.
\end{corollary}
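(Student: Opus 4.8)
The plan is to read this off from Corollary~\ref{Corollary:classesb(x)}, Theorem~\ref{thm::bxby}, and the characterization of essential classes through the relation $\to$. By Corollary~\ref{Corollary:classesb(x)} the irreducible classes of $\pi=\pi_U^b$ that are not absorbing states are exactly the strongly connected components of $b$ restricted to $U$, while every singleton absorbing state $\{z\}$ with $z\in V\setminus U$ is trivially a maximal, hence essential, class. Thus it suffices to fix a strongly connected component $C$ of $b$ restricted to $U$ and show that $C$ is an essential component (in the sense defined above) if and only if, viewed as an irreducible class of $\pi$, it is essential; recall that the latter means that, for $x\in C$, $x\to y$ implies $y\to x$ for every $y\in V$.

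Suppose first that $C$ is an essential class of $\pi$. Let $x\in C$ and let $x=x_0\sim\dots\sim x_n=y$ be a path in $U$ with $b(x_i,x_{i+1})\simeq b(x_i)$ for all $i$. By Theorem~\ref{thm::bxby}(a) this is also a $\pi$-directed path, so $x\to y$; since $C$ is essential, $y\to x$, and then Theorem~\ref{thm::bxby}(b) gives $b(x)\simeq b(y)$, which is precisely the defining property of an essential component. Hence $C$ is an essential component.

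For the converse, suppose $C$ is an essential component, fix $x\in C$ and $y\in V$ with $x\to y$, and show $y\to x$. If $y\in U$, then Theorem~\ref{thm::bxby}(a) produces a path $x=x_0\sim\dots\sim x_n=y$ in $U$ with $b(x_i,x_{i+1})\simeq b(x_i)$ for all $i$, so the essential-component property gives $b(x)\simeq b(y)$, and Theorem~\ref{thm::bxby}(b) yields $y\to x$. It remains to exclude $y\in V\setminus U$: a $\pi$-directed path from $x$ to such a $y$ must remain in $U$ up to some last vertex $x_m\in U$ and then make a single step $x_m\leadsto y$ (since the states in $V\setminus U$ are absorbing), which by Lemma~\ref{lemma::pxygec} forces $b(x_m,y)\simeq b(x_m)$, while Theorem~\ref{thm::bxby}(a) applied to the part of the path lying inside $U$ shows $b(x)\simeq b(x_m)$. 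One then argues that such a configuration --- a vertex of $C$ joined to $V\setminus U$ by a chain of edges of relative weight $\simeq 1$ --- is incompatible with $C$ being an essential component, so $y\in V\setminus U$ cannot occur and $y\to x$ always holds.

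The step I expect to be the main obstacle is precisely this last one: verifying that an essential component cannot reach the absorbing set $V\setminus U$. Here one must apply the dictionary of Theorem~\ref{thm::bxby} and Lemma~\ref{lemma::pxygec} exactly at the vertex where a directed path exits $U$, and check carefully that this exit is genuinely incompatible with the definition of an essential component; the case $U=V$ (no absorbing states), which is the one relevant to essential components of the whole graph, is immediate. Everything else is a routine transcription of the two notions of ``essential'' through Theorem~\ref{thm::bxby}.
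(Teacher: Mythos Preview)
Your approach is the paper's one-line argument (``immediate from Theorem~\ref{thm::bxby} and the definition of an essential class'') spelled out in detail, and the first two paragraphs are correct.

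The step you flag as the main obstacle, however, is a genuine gap that cannot be closed from the definition as stated. The definition of an essential component only constrains paths \emph{in $U$}; it says nothing about an edge $x_m\sim y$ with $x_m\in C$, $y\in V\setminus U$, and $b(x_m,y)\simeq b(x_m)$. Concretely, take $V=\{x,y\}$, $U=\{x\}$, $b(x,y)=1$: then $C=\{x\}$ is an essential component of $b$ restricted to $U$ (the defining condition is vacuous, there being no nontrivial paths inside $U$), yet $\pi(x,y)=\rho(p(x,y))=1$, so $x\to y$ while $y$ is absorbing and $y\not\to x$; hence $\{x\}$ is not an essential class of $\pi$. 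So the direction ``essential component $\Rightarrow$ essential class'' genuinely fails for proper $U\subsetneq V$ unless one strengthens the definition to also forbid edges of relative weight $\simeq 1$ from $C$ into $V\setminus U$. Your observation that the case $U=V$ is immediate is correct, and that is the case used in Section~\ref{sec:char}; the other direction (essential class $\Rightarrow$ essential component), which is what Theorem~\ref{thm:rectranscomponent}(b) and the first sentence of (a) actually need, is unaffected.
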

\begin{proof}
This an immediate consequence of Theorem~\ref{thm::bxby} and the definition of an essential class.
\end{proof}

This corollary allows  for a first necessary condition on recurrence.
\begin{theorem}[A first recurrence/transience criterion]\label{thm:rectranscomponent} Let $b$ be a  graph over $V$, $\subsV\subseteq V$ and $ \pi =  \pi_{U}^{b} $.
	\begin{itemize}
		\item [(a)] If a vertex is recurrent, then it is in an essential component. Furthermore, if an essential component is finite, then every contained vertex is recurrent.
		\item [(b)]  If a vertex is not in an essential component, then it is transient.
	\end{itemize}
\end{theorem}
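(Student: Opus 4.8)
The plan is to reduce both parts to classical facts about recurrence of random walks, via the dictionary between the graph $b$ restricted to $\subsV$ and the transition matrix $\pi=\pi_{\subsV}^{b}$ developed in Corollary~\ref{Corollary:classesb(x)} and Corollary~\ref{Corollary:essentialclasses}. The one preliminary observation is that for $x\in \subsV$ the irreducible class of $x$ with respect to $\pi$ is never an absorbing state: by Lemma~\ref{lem::absStates} the absorbing states of $\pi$ are exactly the vertices of $V\setminus\subsV$, and such a state forms its own singleton class. Hence Corollary~\ref{Corollary:classesb(x)} identifies the irreducible class of $x$ with the strongly connected component of $b$ restricted to $\subsV$ that contains $x$, and Corollary~\ref{Corollary:essentialclasses} says this class is an essential class of $\pi$ if and only if that component is an essential component. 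So, for vertices of $\subsV$, ``lying in an essential component'' and ``lying in an essential class of $\pi$'' are the same thing, and the rest is citation.

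For the first assertion of (a), suppose $x\in\subsV$ is recurrent, i.e.\ recurrent as a state for $\pi$. Since recurrence and transience are properties of the whole irreducible class, the class $C$ of $x$ is recurrent, so by the first half of Theorem~\ref{lem:recvsess} it is an essential class; by the preceding paragraph $C$ is then an essential component, and $x\in C$. For the second assertion of (a), let $C$ be a finite essential component of $b$ restricted to $\subsV$. By the dictionary above $C$ is a finite essential class of $\pi$, hence recurrent by the second half of Theorem~\ref{lem:recvsess}, and therefore every vertex of $C$ is recurrent.

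Part (b) is just the contrapositive of the first assertion of (a) together with the dichotomy built into the definition of recurrence: a vertex $x\in\subsV$ that does not lie in an essential component cannot be recurrent, and since by definition a state is recurrent or transient, $x$ is transient.

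There is no real obstacle: the heavy lifting is already contained in Theorem~\ref{thm::bxby} and its two corollaries, and what remains is the bookkeeping of translating between the graph language and the random-walk language. The single point that needs a line of care is the observation in the first paragraph --- that for $x\in\subsV$ the irreducible class of $x$ is not an absorbing state --- since Corollary~\ref{Corollary:classesb(x)} and Corollary~\ref{Corollary:essentialclasses} are phrased for the non-absorbing classes; this is immediate from Lemma~\ref{lem::absStates}.
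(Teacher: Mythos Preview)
Your proof is correct and follows essentially the same approach as the paper's own proof, which simply cites Corollary~\ref{Corollary:essentialclasses} and Theorem~\ref{lem:recvsess}. You have just spelled out in more detail the translation between essential components of $b$ and essential classes of $\pi$, including the harmless observation via Lemma~\ref{lem::absStates} that vertices in $\subsV$ never sit in an absorbing-state class.
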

\begin{proof}
The result follows directly from Corollary~\ref{Corollary:essentialclasses} and Theorem~\ref{lem:recvsess}.
\end{proof}

\subsection{Characterizing the random walks obtained by the construction}

In the previous section we constructed a transition matrix $\pi= \pi_{U}^{b} $ of a random walk  for a graph $ b $ over $ V $ and a subset $ \subsV\subseteq V $. These random walk are directed weighted graphs and we next turn to the question which kind of directed graphs are obtained in terms of combinatorics.

We start with  a result which is a consequence of Lemma~\ref{lemma:xtoypn} and Theorem~\ref{thm::bxby}. It shows that distance minimizing paths with respect to $ \pi $ and the specific paths identified above with respect to $ b $ coincide.

\begin{theorem}\label{thm::directedpath} Let $b$ be a  graph over $V$ and $\subsV\subseteq V$.  Let  $x,y\in \subsV$ such that $x\leftrightarrow y$. Then the shortest directed paths from $x$ to $y$ and from $y$ to $x$ with respect to  $\pi_{\subsV}^{b}$ are of the same length. 
\end{theorem}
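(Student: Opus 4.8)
The plan is to reduce the statement to Corollary~\ref{cor310}, which already contains the essential input: between two vertices lying in the same strongly connected component, any directed path can be reversed into a directed path of the same length. Write $\dist(u,v)$ for the length of a shortest directed path from $u$ to $v$ with respect to $\pi=\pi_{\subsV}^{b}$; since $x\leftrightarrow y$, both $\dist(x,y)$ and $\dist(y,x)$ are finite.

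First I would fix a shortest directed path $x=x_0\leadsto x_1\leadsto\dots\leadsto x_n=y$ with $n=\dist(x,y)$. Because $x\leftrightarrow y$, Corollary~\ref{cor310} applies and shows that the reversed sequence $y=x_n\leadsto x_{n-1}\leadsto\dots\leadsto x_0=x$ is again a directed path (with respect to $\pi$), now from $y$ to $x$, and of the same length $n$. Hence $\dist(y,x)\le n=\dist(x,y)$. Interchanging the roles of $x$ and $y$ — which is legitimate since the hypothesis $x\leftrightarrow y$ is symmetric — yields the reverse inequality $\dist(x,y)\le\dist(y,x)$, and therefore $\dist(x,y)=\dist(y,x)$, which is the assertion.

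There is essentially no obstacle left at this point, as the real work has already been done in Theorem~\ref{thm::bxby}(a) and Corollary~\ref{cor310}: on a directed path inside a strongly connected component all vertices satisfy $b(x)\simeq b(x_i)\simeq b(y)$, so each reversed step obeys $p_{\subsV}(x_{i+1},x_i)=\frac{b(x_i)}{b(x_{i+1})}\,p_{\subsV}(x_i,x_{i+1})\simeq 1$ and thus maps to a positive real under $\rho$; granting this, the equality of shortest-path lengths is just the two-sided bound above. If one preferred not to invoke Corollary~\ref{cor310} directly, the same conclusion follows from Lemma~\ref{lemma:xtoypn} together with Theorem~\ref{thm::bxby}(a): a shortest directed path from $x$ to $y$ corresponds to the smallest $n$ with $p^{(n)}_{\subsV}(x,y)\simeq 1$, and the identity $p^{(n)}_{\subsV}(y,x)=\frac{b(y)}{b(x)}\,p^{(n)}_{\subsV}(x,y)$, combined with $b(x)\simeq b(y)$, shows this smallest exponent is the same for both orderings.
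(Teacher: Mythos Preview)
Your argument is correct and follows the same route as the paper: both invoke Corollary~\ref{cor310} (underpinned by Theorem~\ref{thm::bxby}) to reverse any directed path between $x$ and $y$, from which equality of the shortest-path lengths is immediate. Your write-up simply makes explicit the two-sided inequality that the paper leaves implicit; the alternative argument via $p_{\subsV}^{(n)}$ is also fine (the ratio should read $b(x)/b(y)$ rather than $b(y)/b(x)$, but since $b(x)\simeq b(y)$ this is immaterial).
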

\begin{proof}
	It follows immediately, since by Corollary \ref{cor310} and Theorem \ref{thm::bxby} (c),  all the directed paths from $x$ to $y$ can be reverted to a directed path from $ y  $ to $ x$.
\end{proof}

This theorem allows us to give an example to exclude certain phenomena.

\begin{example}
In a graph $ b $ over a vertex set $ V$	with $\#V>2$ 
the directed graph $\pi_{U}^{b}$, $U\subseteq V$,  never includes a directed cycle of vertices $ x_1\leadsto x_2\leadsto \dots \leadsto x_n\leadsto x_{1} $ but  $ x_{i+1}\not\leadsto x_{i} $ for some $ i=1\ldots, n-1 $, e.g. see Figure \ref{fig::C4} below.
\begin{figure}[H]
\centering
\begin{tikzpicture}[auto,node distance=2cm,
                    thick,main node/.style={circle, draw, fill=black!100,
                        inner sep=0pt, minimum width=3pt}]

  \node[main node] (1) [label={[above]$x_1$}]{};
  \node[main node] (2) [above right of=1,label={[above]$x_2$}] {};
  \node[main node] (3) [below right of=2,label={[above right]$x_3$}] {};
  \node[main node] (4) [ below left of=3,label={[below=2mm]$x_4$}] {};

  \path[every node/.style={font=\sffamily\small}]
    (1) edge[color=black,  ->]  (2)
   (2) edge[color=black,  ->]  (3)
    (3) edge[color=black, ->]   (4)
    (4) edge[color=black, ->]   (1);

\end{tikzpicture}
\caption{Directed cycle  with $4$ vertices which cannot arise from a non-Archimedean graph.}
\label{fig::C4}
\end{figure}
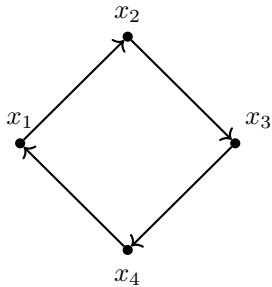
Indeed, by Corollary \ref{cor310}, since for any $i,j$ paths from $x_i$ to $x_j$ cannot be reversed, so, we obtain that $\#U=1$. Assume w.l.o.g that $U=\{x_1\}$. Then by definition of $\pi_U$ we get $\pi_U(x_n,x_1)=0$, and, hence, $x_n\not \leadsto x_1$, which is a contradiction.
\end{example}

The next lemma makes the observation of the example even more precise.

\begin{lemma}\label{Cor:neighbours} Let $b$ be a  graph over $V$,  $\subsV\subseteq V$ and $x,y\in V$, $ x\neq y $. If  $x\leadsto y$ (respectively $ x\to y $),  then exactly one of the following holds:
\begin{itemize}
\item
$y\leadsto x$ (respectively $ y\to x $), which implies $ y\in U $.
\item
$y\in V\setminus \subsV$, i.e., $y$ is an absorbing state.
\item
$y\in \subsV$, $b(x)\pp b(y)$ and they  belong to different strongly connected components.

\end{itemize}
\end{lemma}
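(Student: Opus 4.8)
The plan is to reduce everything to statements about vertices lying inside $\subsV$, where Theorem~\ref{thm::bxby} is available, and then to read off the trichotomy from comparing $b(x)$ with $b(y)$. I will argue the $\leadsto$-version in detail; the $\to$-version is parallel, with Theorem~\ref{thm::bxby} applied to directed paths instead of single edges.

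\emph{Step 1 (localizing to $\subsV$).} First I would note that $x\in\subsV$ in every case: if $x\leadsto y$ with $x\neq y$, then alternative (a) of Lemma~\ref{lemma::pxygec} is impossible, so alternative (b) holds and $x\in\subsV$; in the $\to$-version one applies this to the first edge of a directed path with pairwise distinct vertices, or simply observes that an absorbing state (an element of $V\setminus\subsV$, by Lemma~\ref{lem::absStates}) cannot reach any vertex other than itself. Next, if $y\in V\setminus\subsV$, then $y$ is absorbing by Lemma~\ref{lem::absStates}, so the second alternative holds; and since the first and third alternatives both presuppose $y\in\subsV$, it is the only one. Hence from now on we may assume $y\in\subsV$, so that $x,y\in\subsV$ and (in the $\leadsto$-version) $x\to y$.

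\emph{Step 2 (the trichotomy from $b(x)$ versus $b(y)$).} With $x,y\in\subsV$ and $x\to y$, Theorem~\ref{thm::bxby}(a) gives $b(y)\succsim b(x)$, hence either $b(x)\simeq b(y)$ or $b(x)\pp b(y)$. If $b(x)\simeq b(y)$, then Theorem~\ref{thm::bxby}(b) yields $y\to x$; in the $\leadsto$-version I would upgrade this to $y\leadsto x$ by writing $p(y,x)=\tfrac{b(x)}{b(y)}\,p(x,y)\simeq 1$ (using $p(x,y)\simeq 1$ from Lemma~\ref{lemma::pxygec}(b) and $b(x)\simeq b(y)$) and invoking order-preservation of $\map$ together with $y\in\subsV$ to get $\pi(y,x)=\map(p(y,x))>0$; this is the first alternative, and the third fails since $b(x)\not\pp b(y)$. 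If instead $b(x)\pp b(y)$, then Theorem~\ref{thm::bxby}(b) rules out $y\to x$ (otherwise $b(x)\simeq b(y)$), so also $y\not\leadsto x$; and if $x,y$ lay in the same strongly connected component then $x\leftrightarrow y$, again forcing $y\to x$, a contradiction, so by Corollary~\ref{Corollary:classesb(x)} they lie in different strongly connected components. This is the third alternative.

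\emph{Step 3 (mutual exclusivity).} Finally I would check the three alternatives are pairwise incompatible: the second needs $y\notin\subsV$ while the first and third need $y\in\subsV$; and the first and third cannot both hold, since $y\to x$ together with $x\to y$ gives $x\leftrightarrow y$ (same strongly connected component, by Corollary~\ref{Corollary:classesb(x)}) and $b(x)\simeq b(y)$ by Theorem~\ref{thm::bxby}(b), both contradicting the third. Combined with Steps 1--2, which always produce at least one alternative, this gives ``exactly one''. The only mildly delicate point — the ``main obstacle'', such as it is — is the upgrade from $y\to x$ to $y\leadsto x$ in the $\leadsto$-version of the first alternative; everything else is bookkeeping built directly on Theorem~\ref{thm::bxby}, Lemma~\ref{lemma::pxygec}, and Corollary~\ref{Corollary:classesb(x)}.
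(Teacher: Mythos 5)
Your proof is correct and follows essentially the same route as the paper's: both reduce to Lemma~\ref{lem::absStates}, Lemma~\ref{lemma::pxygec} and Theorem~\ref{thm::bxby} and derive the trichotomy by comparing $b(x)$ with $b(y)$. The only cosmetic difference is that you case-split on $b(x)\simeq b(y)$ versus $b(x)\pp b(y)$ and apply Theorem~\ref{thm::bxby} directly to $x\to y$, whereas the paper splits on whether $y\leadsto x$ and handles the $\to$-version by induction along a path; your upgrade from $y\to x$ to $y\leadsto x$ via $p(y,x)=\tfrac{b(x)}{b(y)}p(x,y)$ is exactly the computation the paper uses.
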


\begin{proof}
First observe, that  $x\leadsto y$ implies that $x$ is not an absorbing state, and $x\in \subsV$ by Lemma~\ref{lem::absStates}. 
Reversing the roles of $ x $ and $ y $, says that we cannot have both $ y\leadsto x $ and $ y\in V\setminus \subsV $.

If $y\not\leadsto x$ and $y$ is not an absorbing state, i.e., $y\in U$, then $p(x,y)>0$ leads to $b(x,y)\simeq b(x)$, while $p(y,x)=0$ leads to $b(x,y)\pp b(x)$, from where the inequality $b(x)\pp b(y)$ and the statement about strongly connected component follow. 

Finally, if   $b(x)\pp b(y)$, then  $y\not\leadsto x$ by Theorem~\ref{thm::bxby}.

The corresponding statement for $ x\to y $ follows by repeating the above argument inductively along some path from $ x $ to $ y $ given by Theorem~\ref{thm::bxby}.
\end{proof}

Next, we state the criterion for a directed graph to arise from a non-Archimedean graph. To this end, we to need some basic facts on theory  of partially ordered sets.
Note that irreducible classes  of any directed graph together with relation $\to$ form a partially ordered set. We treat the  finite and infinite case separately.
\begin{definition}
For a partially ordered set $(A, \to)$  and two elements $a,b\in A$  a \emph{chain of length $ n $ from $a$ to $b$}  is any finite sequence $(a_1,\ldots,a_{n})$  in  $ A$ such that 
$$
a=a_0\to a_1\to \dots\to a_n=b.
$$
The \emph{height} $ h(a) $  of $ a\in A $ is the supremum of the lengths of paths from any element of $ A $ to $ a $.
\end{definition}
Observe that if $ A  $ is finite then $ h(a) $ is the maximal length of chains from minimal elements to $ a $ and, in particular, finite. In this case, it is immediate to see that $ h $ is order preserving, i.e., if $ a\to b $, then $ h(a)<h(b) $.

\begin{theorem}\label{thm:finitevsdirected} Let $ \pi $ be a transition matrix over a finite set $ V $. Then, there exists a weighted graph $ b $ over $ V $ with respect to a non-Archimedean ordered field $\Bbb K$ and $\subsV\subseteq V$ such that $\pi=\pi_{U}^{b}$ if and only if  there is a function $\rw:V\to \Bbb R^+$ such that 
	for any irreducible class $C$ and  $x,y\in C$ 
	$$
	\prob(x,y)\rw(x)=\prob(y,x)\rw(y)
	$$
	and $ \pi(x,x)=0 $ if $ x $ is not an absorbing state.
	Furthermore, for a given transition matrix, $ \beta $ can be chosen such that $ \beta (a)=1 $ for some $ a\in C $ and $  \beta(x)=\rho (b(x)/b(a)) $ for all $ x\in C $.
\end{theorem}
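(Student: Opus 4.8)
The plan is to prove the two implications separately; the forward direction is a short computation with the real-part map $\map$, and the backward direction is an explicit construction.

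For the \emph{forward} direction, suppose $\prob=\prob_{\subsV}^{b}$. For each irreducible class $C$ fix a base vertex $a\in C$ and set $\rw(x):=\map(b(x)/b(a))$ for $x\in C$; doing this for every class defines $\rw$ on all of $V$. By Lemma~\ref{lem::absStates} and Corollary~\ref{Corollary:classesb(x)}, each class is either contained in $\subsV$ or is a singleton absorbing state; in the first case $b(x)\simeq b(a)$ for $x\in C$ by Theorem~\ref{thm::bxby}(b), and in the second this is trivial, so in either case $b(x)/b(a)\in\Kf$ is not infinitesimal and $\rw(x)\in\mathbb{R}^{+}$ with $\rw(a)=1$. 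Since $\prob(x,y)=\map(b(x,y)/b(x))$ for $x\in\subsV$ and $\map$ is a ring morphism (Lemma~\ref{lem::mapProp}), for $x,y$ in a class contained in $\subsV$ we obtain $\prob(x,y)\rw(x)=\map(b(x,y)/b(a))$, which by the symmetry $b(x,y)=b(y,x)$ equals $\prob(y,x)\rw(y)$; for an absorbing singleton the identity is vacuous. Finally $\prob(x,x)=\map(0)=0$ for $x\in\subsV$ by Lemma~\ref{lem::absStates}, and since $\rw$ has precisely the stated form, the ``furthermore'' part is included.

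For the \emph{backward} direction, assume such a $\rw$ is given and rescale it on each class so that $\rw(a)=1$ at a chosen base vertex $a$. Work in the non-Archimedean ordered field $\K=\mathbb{R}(\varepsilon)$ of rational functions over $\mathbb{R}$ with the ordering in which $\varepsilon$ is a positive infinitesimal. Put $\subsV$ equal to the set of non-absorbing states of $\prob$, so that by Lemma~\ref{lem::absStates} the absorbing states of any $\prob_{\subsV}^{b}$ will coincide with those of $\prob$. As $V$ is finite the irreducible classes ordered by $\to$ form a finite poset; writing $h(C)$ for the height of $C$, set $m(C):=\varepsilon^{-h(C)}$, so $C\to C'$ with $C\neq C'$ forces $h(C)<h(C')$ and hence $m(C)\pp m(C')$. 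Now define a symmetric weight $b$ with vanishing diagonal by $b(x,y)=\prob(x,y)\rw(x)m(C_{x})$ if $x\leadsto y$, by $b(x,y)=\prob(y,x)\rw(y)m(C_{y})$ if $y\leadsto x$ but $x\not\leadsto y$, and by $b(x,y)=0$ if there is no $\prob$-edge between $x$ and $y$, where $C_{z}$ denotes the class of $z$. Detailed balance makes this unambiguous and symmetric: if $x\leadsto y$ and $y\leadsto x$ then $x,y$ lie in one class and $\prob(x,y)\rw(x)=\prob(y,x)\rw(y)$, whereas $x\leadsto y$ with $C_{x}\neq C_{y}$ forces $y\not\leadsto x$. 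If the resulting $b$ is not connected, add finitely many edges of weight $\varepsilon$; since $\varepsilon\pp m(C)$ for every class, this influences none of the estimates below.

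It remains to verify $\prob_{\subsV}^{b}=\prob$ and the normalization. For $x\in\subsV$, summing the edge weights at $x$ and using $\sum_{y}\prob(x,y)=1$ together with $\prob(x,x)=0$ gives $b(x)=\rw(x)m(C_{x})+r_{x}$, where $r_{x}$ is the sum of the weights of the edges entering $x$ from strictly lower classes; each such class has strictly smaller height, so $r_{x}\pp m(C_{x})$ and thus $b(x)=\rw(x)m(C_{x})(1+\eta_{x})$ with $\eta_{x}$ infinitesimal. Hence $p(x,y)=\prob(x,y)/(1+\eta_{x})$ whenever $x\leadsto y$, so $\map(p(x,y))=\prob(x,y)$; if $y\leadsto x$ but $x\not\leadsto y$ then $p(x,y)$ carries the factor $m(C_{y})/m(C_{x})$ and is infinitesimal, so $\map(p(x,y))=0=\prob(x,y)$; and in all other cases, in particular for $y=x$ where $\prob(x,x)=0$ is used, $p(x,y)=0=\prob(x,y)$. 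For $x\in V\setminus\subsV$ both sides equal the absorbing-state row. Finally $b(a)=m(C)(1+\eta_{a})$ for the base vertex $a$, whence $\map(b(x)/b(a))=\rw(x)$, as required. I expect the main obstacle to be exactly this bookkeeping with the scales $m(C)$: one must ensure that the corrections $r_{x}$ coming from lower classes are genuinely infinitesimal relative to the leading term $\rw(x)m(C_{x})$ — which is what the strict drop in height along $C_{y}\to C_{x}$, $C_{y}\neq C_{x}$, guarantees — and, together with the symmetry forced by the detailed-balance identity, this is what makes the construction self-consistent; the connectedness point is minor and is handled by the infinitesimal padding edges.
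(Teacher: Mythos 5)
Your proof is correct and follows essentially the same route as the paper's: the forward direction is the identical computation with $\map(b(x)/b(a))$ and the ring-morphism properties of $\map$, and the backward direction is the paper's construction $b(x,y)=\prob(x,y)\rw(x)\N^{h(C_x)}$ built from the height function of the finite poset of irreducible classes, instantiated with $\N=\varepsilon^{-1}$ in $\mathbb{R}(\varepsilon)$ (your case split for inter-class edges agrees with the paper's $\max\{\cdot\}\N^{\min\{h(x),h(y)\}}$ formula). Your explicit $\varepsilon$-padding to guarantee connectedness is a small point the paper leaves implicit.
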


\begin{proof}
	We  construct $b$ and $U$. First, we fix an infinitely large $\N \in\K$, i.e., $1\pp\N$. Furthermore, we know that the set of irreducible classes is a finite partially ordered set and has a height function $ h $.  For  $ x \in  V$ contained in a irreducible class  $ C_{x} $, we denote $ h(x)=h(C) $.
	We let $ U $
	be the subset of $ V $ which are not absorbing states. To define $ b $,  we let $ b=0 $ on $ (V\setminus \subsV) \times(V\setminus \subsV) $. Furthermore, we define
	for  $x\in U$,  $y\in C_{x}$
	\begin{align*}
		b(x,y) 
		=	\prob(x,y)\rw(x) \N^{h({x})}
	\end{align*}
and  for $ y\in V\setminus C_{x} $
	\begin{align*}
b(x,y) 
=		\max\{\prob(x,y)\rw(x),\prob(y,x)\rw(y)\}\N^{\min\{h({x}), h({y})\}}.
\end{align*}
Then $ b $ is clearly symmetric and has zero diagonal.
Observe,  for $ x\in U $,
\begin{align*}
	b(x,y)=
	\begin{cases}
		\prob(x,y)\rw(x) \N^{h({x})} &: x\leadsto y, \mbox{ i.e., $ \prob(x,y)>0 $}\\
		\prob(y,x)\rw(y) \N^{h({y})} &: \mbox{else}.
	\end{cases}
\end{align*}
Note that in the second case we have $ b(x,y)=0 $ whenever $ y\not\leadsto x $ and $ h(y)<h(x) $ whenever $ y\leadsto x $.
As $ \prob(x,x)=0 $ for $ x\in U $, we have 
\begin{align*}
	  b(x)  &=\sum_{y:x\leadsto y}b(x,y)+\sum_{\substack{y:x\not\leadsto y,\\ \;\; y\leadsto x }}b(x,y)=\sum_{ y\in V } 	\prob(x,y)\rw(x) \N^{h({x})} +
\sum_{\substack{y:x\not\leadsto y,\\ \;\; y\leadsto x}} 	\prob(y,x)\rw(y) \N^{h({y})}\\
& =  \left(  1+ \sum_{\substack{y:x\not\leadsto y,\\ \;\; y\leadsto x} } 	\prob(y,x)\frac{\rw(y)}{\rw(x)} \N^{- (h(x)-h({y}))}\right)\rw(x)\N^{h(x)}=(1+\tau)\rw(x)\N^{h(x)}
\end{align*}
for some infinitesimal $ \tau $. We obtain for $ \prob(x,y)>0 $ and $ x\neq y $ (which implies $ x\in U $)
\begin{align*}
	\pi^{b}_{\subsV}(x,y) = \rho(p(x,y))=\rho\left(\frac{b(x,y)}{b(x)}\right)
	=\rho\left(\frac{\prob(x,y)\beta(x)\N^{h(x)}}{(1+\tau)\beta(x)\N^{h(x)}}\right)
	=\rho\left(\frac{\prob(x,y)}{(1+\tau)}\right)=\prob(x,y)
\end{align*}
and for $ \prob(x,y)=0 $ (which includes the case $ x\in V\setminus U $ and $ x\neq y $) where  we have either $ \prob(y,x)=0 $ or $ h(y)<h(x) $
\begin{align*}
	\pi^{b}_{\subsV}(x,y) =\rho\left(\frac{\prob(y,x)\beta(y)\N^{h(y)}}{(1+\tau)\beta(x)\N^{h(x)}}\right)= \rho\left(N^{-(h(x)-h(y))}\frac{\prob(y,x)\beta(y)}{(1+\tau)\beta(x)}\right)=0=\prob(x,y).
\end{align*}
Finally, $ \pi_{U}^{b}(x,x) =1$ by definition for $ x\in V\setminus \subsV $.\medskip

To prove the opposite direction we fix in each  irreducible class $C$ a vertex $x\in C$ and put
 $\rw(x)=1.
$
Further, for any $y\in C$, we prescribe $\rw(y)=\map({b(y)}/{b(x)})$. Then, we have, for any $y,z\in~C$,
\begin{align*}
\prob(y,z) \rw(y)=\map(p(y,z))\map\left(\dfrac{b(y)}{b(x)}\right)=\map\left(\dfrac{p(y,z) b(y)}{b(x)}\right)=\map\left(\dfrac{p(z,y) b(z)}{b(x)}\right)=\prob(z,y) \rw(z),
\end{align*}
where the second and last equality hold, since $p\preceq 1$ and ${b(y)}/{b(x)}$ as well as ${b(z)}/{b(x)}$  have no infinitely large component, since $y,z$  and $ x$ are all in the same irreducible class  $C$, confer Theorem~\ref{thm::bxby}.
\end{proof}

The idea of proof for infinite graph is  similar, but there is a difference: we can not necessarily map an infinite partially ordered set to natural numbers, see e.g. \cite{mathoverflow}.
 To this end, we prove  the following lemma.

\begin{lemma}\label{lemma:mappingDinf}
Let $(A, \to)$ be a countable partially ordered set. Then, there exists an order preserving  mapping
$
h_\infty:A\to \Bbb Q.
$\end{lemma}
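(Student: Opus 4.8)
The plan is to construct $h_\infty$ by enumerating $A = \{a_1, a_2, a_3, \ldots\}$ and building an order-preserving map into $\mathbb{Q}$ one element at a time, using the density of $\mathbb{Q}$ at each step. Write $A_n = \{a_1, \ldots, a_n\}$ for the first $n$ elements in the enumeration; I will define $h_\infty$ on $A_n$ inductively so that it is order preserving as a map $(A_n, \to) \to (\mathbb{Q}, <)$, meaning $a_i \to a_j$ with $a_i \neq a_j$ implies $h_\infty(a_i) < h_\infty(a_j)$. (Here one should be slightly careful: since $\to$ is only a partial order — it is antisymmetric on the set of irreducible classes, which is the relevant $A$ — distinct elements that are incomparable get mapped to distinct rationals with no constraint on their relative order; and if $A$ carried a genuine preorder with $a \leftrightarrow b$ for $a\neq b$, one would first pass to the quotient. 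For the application $A$ is the poset of irreducible classes, so antisymmetry holds.)

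First I would set $h_\infty(a_1)$ to be any rational, say $0$. For the inductive step, suppose $h_\infty$ has been defined on $A_{n-1}$ and is order preserving there. I need to place $a_n$. Consider $L = \{ h_\infty(a_i) : i < n,\ a_i \to a_n,\ a_i \neq a_n\}$ and $U = \{ h_\infty(a_i) : i < n,\ a_n \to a_i,\ a_i \neq a_n\}$. By transitivity of $\to$ and the inductive order-preservation hypothesis, every element of $L$ is strictly less than every element of $U$: if $a_i \to a_n$ and $a_n \to a_j$ then $a_i \to a_j$, and if also $a_i \neq a_j$ then $h_\infty(a_i) < h_\infty(a_j)$; the case $a_i = a_j$ cannot occur because it would force $a_i \to a_n \to a_i$, hence $a_i = a_n$ by antisymmetry, contradicting $i < n$. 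Both $L$ and $U$ are finite (possibly empty) subsets of $\mathbb{Q}$, so I can pick $h_\infty(a_n) \in \mathbb{Q}$ strictly between $\max L$ and $\min U$ (taking it larger than $\max L$ if $U = \emptyset$, smaller than $\min U$ if $L = \emptyset$, and arbitrary if both are empty), and also distinct from all previously used values — density of $\mathbb{Q}$ makes this possible. This keeps $h_\infty$ order preserving on $A_n$.

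Finally I would check that the resulting $h_\infty : A \to \mathbb{Q}$ is order preserving on all of $A$: given $a \to b$ in $A$ with $a \neq b$, both lie in some $A_n$, and order-preservation was maintained at every stage, so $h_\infty(a) < h_\infty(b)$. I do not need injectivity for the statement as phrased, but the construction gives it for free. The main obstacle — and the only subtle point — is verifying the separation $\max L < \min U$ at the inductive step, which is exactly where transitivity and antisymmetry of $\to$ are used; once that is in hand, the rest is a routine back-and-forth-style argument exploiting that $\mathbb{Q}$ is a countable dense linear order without endpoints.
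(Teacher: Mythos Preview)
Your proposal is correct and follows essentially the same approach as the paper: enumerate $A$, set $h_\infty$ on the first element arbitrarily, and at each step choose a rational strictly between the images of the predecessors and successors already handled. Your write-up is in fact more careful than the paper's, since you explicitly verify the separation $\max L < \min U$ via transitivity and antisymmetry and treat the empty cases, whereas the paper simply asserts that such a $q$ exists.
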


\begin{proof}
Since $ A $ is countable, we can enumerate it (in an arbitrary not necessarily order preserving way), i.e., $ A=\{a_{0},a_{1},a_{2},\ldots\} $. 
Let $ A_{n}=\{a_{0},\ldots,a_{n}\} $ and we  define $ h_{\infty}(a_{n}) $ inductively over $ n $. Let 	$ h_{\infty}(a_{0})=0 $. Having defined $ h_{\infty}(a_{0}) ,\ldots, h_{\infty}(a_{n})$ in an order preserving way. Then, there exists an $ q\in \mathbb{Q} $ such that
$$
\max_{b\in A_{n}, b\to a_{n+1}} h_\infty(b)<q< \min_{b\in A_{n}, a_{n+1}\to b} h_\infty(b)
$$
and we put $  h_\infty(a_{n+1})=q $.
\end{proof}

We now come to an analogue of Theorem~\ref{thm:finitevsdirected} for infinite graphs.

\begin{theorem}\label{thm:infinitevsdirected}  Let $ \pi $ be a locally finite  transition matrix over an at most countable set $ V $. Then, there exists a weighted graph $ b $ over $ V $ with respect to a real closed non-Archimedean ordered field and $\subsV\subseteq V$ such that $\pi=\pi_{U}^{b}$ if and only if  there is a function $\rw:V\to \Bbb R^+$ such that 
	for any irreducible class $C$ and  $x,y\in C$ 
	$$
	\prob(x,y)\rw(x)=\prob(y,x)\rw(y)
	$$
	and $ \pi(x,x)=0 $ if $ x $ is not an absorbing state. 	Furthermore, $ \beta $ can be chosen such that $ \beta (a)=1 $ for some $ a\in C $ and $  \beta(x)=\rho (b(x)/b(a)) $ for all $ x\in C $.
\end{theorem}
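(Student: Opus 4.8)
The plan is to repeat the proof of Theorem~\ref{thm:finitevsdirected} almost verbatim, the only substantial change being that the height function $h$ of the poset of irreducible classes—which need not be $\mathbb{N}$-valued once $V$ is infinite—is replaced by the $\mathbb{Q}$-valued (strictly) order preserving map $h_\infty$ furnished by Lemma~\ref{lemma:mappingDinf}, applied to the at most countable poset of irreducible classes of $\prob$. For the implication ``$\prob=\prob_{\subsV}^b\Rightarrow$ existence of $\rw$'' nothing at all changes and neither finiteness of $V$ nor real-closedness is used: in each irreducible class $C$ one fixes $a\in C$, sets $\rw(a)=1$ and $\rw(x)=\rho(b(x)/b(a))$ for $x\in C$, which is legitimate since $b(x)/b(a)\simeq 1$ is at most finite by Theorem~\ref{thm::bxby}(b); then $b(x,y)=b(y,x)$ together with $p\preceq 1$ gives
$$
\rho(p(x,y))\rho\!\left(\tfrac{b(x)}{b(a)}\right)=\rho\!\left(\tfrac{p(x,y)b(x)}{b(a)}\right)=\rho\!\left(\tfrac{p(y,x)b(y)}{b(a)}\right)=\rho(p(y,x))\rho\!\left(\tfrac{b(y)}{b(a)}\right),
$$
i.e.\ $\prob(x,y)\rw(x)=\prob(y,x)\rw(y)$, while $\prob(x,x)=0$ off the absorbing states by Lemma~\ref{lem::absStates}. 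This construction of $\rw$ also delivers the ``furthermore'' part.

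For the converse, fix a real closed non-Archimedean ordered field containing $\mathbb{R}$ (for definiteness, say the Levi-Civita field) and an infinitely large $\N\succ 0$; real-closedness is invoked precisely so that the rational powers $\N^{q}$, $q\in\mathbb{Q}$, exist and are positive. Writing $h_\infty(x):=h_\infty(C_x)$, let $\subsV$ be the set of non-absorbing states, put $b\equiv 0$ on $(V\setminus\subsV)\times(V\setminus\subsV)$, and for $x\in\subsV$ define, exactly as in the finite proof,
$$
b(x,y)=\begin{cases}\prob(x,y)\rw(x)\N^{h_\infty(x)}, & y\in C_x,\\[2pt]\max\{\prob(x,y)\rw(x),\prob(y,x)\rw(y)\}\,\N^{\min\{h_\infty(x),h_\infty(y)\}}, & y\notin C_x.\end{cases}
$$
Then $b$ is symmetric with vanishing diagonal, and for $x\in\subsV$ one checks $b(x)=(1+\tau)\rw(x)\N^{h_\infty(x)}$ with $\tau\in\mathbb{K}^+$ infinitesimal: the part of the sum over $y$ with $x\leadsto y$ equals $\rw(x)\N^{h_\infty(x)}\sum_{y}\prob(x,y)=\rw(x)\N^{h_\infty(x)}$ (a finite sum, by local finiteness of $\prob$), while every remaining $y$ contributing to $b(x)$ satisfies $y\leadsto x$, $x\not\leadsto y$, hence $C_y\to C_x$ strictly, so $h_\infty(y)<h_\infty(x)$ and its contribution carries the infinitesimal factor $\N^{-(h_\infty(x)-h_\infty(y))}$. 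From here the identity $\prob_{\subsV}^b=\prob$ is copied line by line from the finite case: if $\prob(x,y)>0$ then $\prob_{\subsV}^b(x,y)=\rho(b(x,y)/b(x))=\rho(\prob(x,y)/(1+\tau))=\prob(x,y)$; if $\prob(x,y)=0$ with $x\neq y$ then either $\prob(y,x)=0$ as well or $h_\infty(y)<h_\infty(x)$, so $b(x,y)\pp b(x)$ and $\prob_{\subsV}^b(x,y)=0$; and $\prob_{\subsV}^b(x,x)=1$ for $x\in V\setminus\subsV$ by definition. The normalization $\rw(a)=1$ and the identity $\rw(x)=\rho(b(x)/b(a))$ for the constructed $b$ follow as in the finite case (scale $\rw$ within each class).

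The genuinely new feature, and the point I expect to be the main obstacle, is exactly this passage to rational exponents: replacing $h$ by the $\mathbb{Q}$-valued $h_\infty$ forces one to work over a real closed field merely in order to write down the ``level'' weights $\N^{h_\infty(x)}$, and one must then re-establish that these levels remain infinitely separated—that is, that the correction term $\tau$ in $b(x)$ is truly infinitesimal and, more basically, that all sums defining $b$ are honest elements of $\mathbb{K}$ (convergent in the order topology). Both facts are secured by combining the \emph{strictness} of the order preservation of $h_\infty$ (so that every correction term sits at a strictly lower $\N$-level and hence is infinitesimal relative to $\N^{h_\infty(x)}$) with the local finiteness of $\prob$ (which keeps the relevant sums finite). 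Once this bookkeeping is carried out, the algebraic verification $\prob_{\subsV}^b=\prob$ needs no new idea beyond Theorem~\ref{thm:finitevsdirected}.
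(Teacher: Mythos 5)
Your proposal is correct and follows exactly the paper's route: the paper's proof likewise consists of rerunning the construction of Theorem~\ref{thm:finitevsdirected} with the height function $h$ replaced by the $\mathbb{Q}$-valued strictly order preserving map $h_\infty$ from Lemma~\ref{lemma:mappingDinf}, invoking real closedness solely to form the rational powers $\N^{h_\infty(x)}$. You merely spell out more explicitly than the paper the bookkeeping showing the correction term $\tau$ stays infinitesimal and the sums stay finite, which is consistent with the intended argument.
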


\begin{proof}
 Lemma \ref{lemma:mappingDinf} allows us to reproduce the proof of Theorem~\ref{thm:finitevsdirected} by replacing $ h $ with $h_\infty$. Note, that we need a real closedness in order to have rational  powers of an infinitesimal $\mathcal N$.
\end{proof}

\begin{remark}
	Given the height function $ h $, 
the proof of Theorem \ref{thm:finitevsdirected} is an explicit construction and also $ h $ can be obtained explicitly from a finite random walk. However, in the infinite case, the inductive construction of $ h_{\infty}  $ often does not allow for an explicit construction. In consequence, the infinite graphs arising from 
Theorem~\ref{thm:infinitevsdirected} are not as explicit as the crucial ingredient $ h_{\infty} $ is not.
\end{remark}

\section{The  reciprocal of the normalized capacity}\label{sec:G}

In this section we study a quantity $ G  $ which corresponds to the diagonal of the Green's function.  In order to define this quantity we need the following lemma.

\begin{lemma}\label{cor::GassolOfDP} Let $b$ be a  graph over $V$. For  finite connected sets $K\subseteq L\subseteq V$ and  $a\in K$,
	\begin{equation*}
	\dfrac{\c_L(a)}{b(a)}\preceq	\dfrac{\c_K(a)}{b(a)}\preceq 1.
	\end{equation*}
Furthermore,
	\begin{equation*}
	\dfrac{1}{\displaystyle\rho\left(\frac{\c_L(a)}{b(a)}\right)}\ge 	\dfrac{1}{\displaystyle\rho\left(\frac{\c_K(a)}{b(a)}\right)}=\dfrac1{\sum_{x\in V}{(1-\map(v(x)))\map(p(a,x))}}\ge 1.
	\end{equation*}
	where $v$ is the solution of Dirichlet problem \eqref{dirpr} for $ K $ and $ a $.
\end{lemma}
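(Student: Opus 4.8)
The statement has three ingredients: the first chain of inequalities $\c_L(a)/b(a) \preceq \c_K(a)/b(a) \preceq 1$ involving the non-Archimedean order, the second chain involving real numbers after applying $\rho$, and the explicit formula $\rho(\c_K(a)/b(a)) = \sum_{x\in V}(1-\rho(v(x)))\rho(p(a,x))$. I would establish them in that order, as each feeds the next.

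\textbf{Step 1: The first chain.} The inequality $\c_L(a) \preceq \c_K(a)$ is exactly the monotonicity of capacity, Lemma~\ref{lem::monOfcap}. For the upper bound $\c_K(a) \preceq b(a)$, i.e. $\c_K(a)/b(a) \preceq 1$, I would use the variational characterization $\c_K(a) = Q(v)$ from Proposition~\ref{thm::maxpr} together with the fact that the solution $v$ of \eqref{dirpr} minimizes the energy among functions in $C_c(V)$ which are $1$ at $a$ and vanish outside $K$. The natural test function is $1_a$ itself: $Q(1_a) = \frac12\sum_{x,y}b(x,y)(1_a(x)-1_a(y))^2 = \sum_{y}b(a,y) = b(a)$. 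Hence $\c_K(a) = Q(v) \preceq Q(1_a) = b(a)$, giving the claim.

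\textbf{Step 2: The explicit formula.} Recall from Definition~\ref{def::capK} and Proposition~\ref{thm::maxpr} that $\c_K(a) = \Delta v(a) b(a)$, so $\c_K(a)/b(a) = \Delta v(a)$. Now expand using the definition of the Laplace operator:
\[
\Delta v(a) = \frac{1}{b(a)}\sum_{y\in V}b(a,y)(v(a)-v(y)) = \sum_{y\in V}p(a,y)(1-v(y)),
\]
using $v(a)=1$. Applying the map $\rho$, which by Lemma~\ref{lem::mapProp} is a ring morphism commuting with finite sums (and the sum is finite by local finiteness and the support constraint on $v$), I get $\rho(\c_K(a)/b(a)) = \sum_{y\in V}\rho(p(a,y))(1-\rho(v(y)))$, which is the asserted formula. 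The inequality $\rho(\c_K(a)/b(a)) \le 1$ follows from $\c_K(a)/b(a) \preceq 1$ by order-preservation of $\rho$ (Lemma~\ref{lem::mapProp}).

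\textbf{Step 3: The reciprocal chain.} From Step 1, $\c_L(a)/b(a) \preceq \c_K(a)/b(a)$, hence by order-preservation $\rho(\c_L(a)/b(a)) \le \rho(\c_K(a)/b(a))$. Both are real numbers in $(0,1]$ — positivity because $v \succ 0$ on $K$ forces $\Delta v(a) \succ 0$, but I should be slightly careful: $\rho$ of a positive non-infinitesimal quantity is positive, and I would note that $\c_K(a)/b(a) = Q(v)/b(a) \simeq 1$ is not automatic; however what is needed is only that $\rho(\c_K(a)/b(a)) \ge 1 \cdot (\text{something})$... actually the chain claims $\ge 1$ for the reciprocals, which just restates $\rho(\c_K(a)/b(a))\le 1$ and $\rho(\c_L(a)/b(a))\le 1$ together with taking reciprocals of the first inequality. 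So I would take reciprocals of $0 < \rho(\c_L(a)/b(a)) \le \rho(\c_K(a)/b(a)) \le 1$ (allowing $+\infty$ if a quantity is $0$, though the monotone direction is what matters) to conclude. The potential subtlety — and the only real obstacle — is justifying that these real parts are strictly positive so that the reciprocals are meaningful finite numbers; this follows because for fixed finite $K$ the solution $v$ satisfies $\Delta v(a) \succeq \c_K(a)/b(a)$ with $\c_K(a) = Q(v) \succ 0$ and one checks $Q(v)/b(a)$ is at most finite and not infinitesimal (indeed bounded below by a positive rational) since $v$ is supported on the fixed finite set $K$; I would spell this out by noting $v$ takes values in a fixed finitely-generated context so $Q(v) \simeq b(a)$ need not hold but $Q(v)/b(a) \succ c$ for some rational $c$ does, because all the entries involved are comparable within the finite graph $b|_K$. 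Everything else is routine application of Lemmas~\ref{lem::mapProp} and~\ref{thm::maxpr}.
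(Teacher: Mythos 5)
Your Steps 1 and 2 are correct and essentially follow the paper's route: monotonicity of capacity (Lemma~\ref{lem::monOfcap}) for the first inequality, the identity $\c_K(a)/b(a)=\Delta v(a)=\sum_{x}(1-v(x))p(a,x)$ for the explicit formula, and order preservation of $\map$ to pass to the reals. Your upper bound $\c_K(a)\preceq b(a)$ via the test function $1_a$ and energy minimization is a mild variation; the paper instead reads the bound off the same expansion, using $v\succeq 0$ to get $(1-v(x))\preceq 1$ and $\sum_x b(a,x)=b(a)$. Either works.

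The problem is the final paragraph. Your claim that $\map\left(\c_K(a)/b(a)\right)$ is strictly positive --- ``bounded below by a positive rational $c$ \ldots because all the entries involved are comparable within the finite graph $b|_K$'' --- is false. In a non-Archimedean graph the edge weights in a finite set need not be mutually comparable, and the normalized capacity of a finite set can perfectly well be infinitesimal: the paper's own Example~\ref{ex1} is a five-vertex path with $\c_K(a)/b(a)=\tau/(2+2\tau)$, so $\map\left(\c_K(a)/b(a)\right)=0$ and $\G_K(a)=\infty$. So the ``obstacle'' you identify cannot be resolved the way you propose. Fortunately it does not need to be: the reciprocal chain is to be read in $[1,\infty]$ with the convention $1/0=\infty$ (consistent with the subsequent definition of $\G(a)\in[1,\infty]$), and with that convention the inequalities follow immediately from $0\le\map\left(\c_L(a)/b(a)\right)\le\map\left(\c_K(a)/b(a)\right)\le 1$ by taking reciprocals --- which you had in fact already noted in passing before introducing the spurious positivity argument. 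Delete that argument and the proof is fine.
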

\begin{proof}
By monotonicity of the capacity, Lemma~\ref{lem::monOfcap}, and the  maximum principle, Proposition \ref{thm::maxpr}, we get
	$$
	\dfrac{\c_L(a)}{b(a)}\preceq \dfrac{\c_K(a)}{b(a)}=\Delta v(a)=\dfrac{1}{b(a)}\sum_{x\in V} {(1-v(x))}b(a,x)\preceq \dfrac{\sum_{x\in V} b(a,x)}{b(a)}\preceq 1,
	$$
	where $v$ is the solution of Dirichlet problem \eqref{dirpr}  for $ K $ and $ a $ which satisfies $ v\succ 0 $. Thus, we can apply $ \map $ to the left hand side and the second statement follows as $ \map $ is an order preserving ring morphism and the definition of $p(a,x)=b(a,x)/b(a)  $.
\end{proof}

This allows us to define the quantity which is vital for the further considerations of this paper.

\begin{definition}
Let $b$ be a  graph over $V$. For finite connected $K\subseteq V$ and $a\in K$, we define
$$
\G_K(a)=\dfrac{1}{\displaystyle\rho\left(\frac{\c_K(a)}{b(a)}\right)}.
$$
Furthermore, for an exhaustion $ (K_{n}) $ of $ V $ and $ a\in K_{0} $, we define
$$
\G(a)=\limsup_{n\to\infty}\G_{K_n}(a)\in [1,\infty].
$$
\end{definition}
Observe that due to Lemma~\ref{cor::GassolOfDP}, we have $$  1\leq\G_{K_{n}}(a)\leq \G_{K_{n+1}}(a)\leq \G(a) $$ for $ n\in \mathbb{N} $. Thus, the limit superior above is indeed a limit. Furthermore, if the graph has positive or null capacity, i.e., $ \c(a) $ exists, then
$$
\G(a)=\dfrac{1}{\map\left(\dfrac{\c(a)}{b(a)}\right)},
$$
where $ 1/0 = \infty$.  Next, we discuss a local Harnack inequality which will eventually allow us to conclude that $ \G $ is either finite or infinite on strongly connected components.

\begin{lemma}[Local Harnack inequality]\label{lemma::Harnack}
Let $b$ be a  graph over $V$ and let $v$ be the solution of Dirichlet problem \eqref{dirpr} for a finite connected $K\subseteq V$ and $ a\in K $. If $x,y\in V$ such that $x\to y$ with respect to $ \pi_{K}^{b} $, then
 $v(x)\succsim v(y)$.  In particular, if $x$  and $y$ belong to the same irreducible class, then $v(x)\simeq v(y)$. Furthermore, the constants in the two sided  estimates are uniform when increasing $ K $. 
\end{lemma}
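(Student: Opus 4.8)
The plan is to exploit the probabilistic meaning of the solution $v$ of the Dirichlet problem \eqref{dirpr}, namely that $v(x)$ equals the probability (for the random walk governed by $\pi_K^b$) of hitting $a$ before leaving $K$, combined with the fact that a single step of this walk from $x$ to $y$ has probability $p(x,y) = \rho^{-1}$-close to a positive rational whenever $x\leadsto y$. First I would record the one-step relation: since $v$ solves $\Delta v = 0$ on $K\setminus\{a\}$, for any $x\in K\setminus\{a\}$ we have $v(x) = \sum_{z} p(x,z) v(z) \succeq p(x,y)\, v(y)$ for any fixed neighbor $y$, because all terms $p(x,z)v(z)$ are nonnegative (here $v\succeq 0$ on $V$ by the maximum principle, Proposition~\ref{thm::maxpr}). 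If $x\leadsto y$ with respect to $\pi_K^b$ and $x\neq a$, then by Lemma~\ref{lemma::pxygec} we have $p(x,y)\simeq 1$, i.e. $p(x,y)\succeq c$ for some $c\in\mathbb{Q}^+$, and hence $v(x) \succeq c\, v(y)$, which is exactly $v(x)\succsim v(y)$. The case $x=a$ is immediate since $v(a)=1\succeq v(y)$ for all $y$ by the maximum principle. For a general directed path $x=x_0\leadsto x_1\leadsto\cdots\leadsto x_n=y$ one iterates: $v(x)\succeq c_0 c_1\cdots c_{n-1}\, v(y)$, and since the path can be taken to have length at most $\#K$ (distinct vertices), the product of rationals is bounded below by a positive rational depending only on the combinatorics, giving $v(x)\succsim v(y)$.

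Next, for the "in particular" statement: if $x$ and $y$ are in the same irreducible class, then $x\to y$ and $y\to x$, so applying the above in both directions yields $v(x)\succsim v(y)$ and $v(y)\succsim v(x)$, i.e. $v(x)\simeq v(y)$. One subtlety: if the class of $x,y$ happens to contain $a$, one should note that $v(a)=1\simeq v(x)$ follows from $v(x)\preceq 1$ (maximum principle) together with $v(a)=1\succeq c\,\cdots\, v(x)$; in fact $a$ is its own singleton class under $\pi_K^b$ if one treats it as absorbing, but even without that observation the two-sided bound goes through since all vertices in the class return to each other, and $a$ — if reachable — is bounded below by its value $1$ and above trivially.

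For the uniformity statement, the key point is that the rational lower bounds $c_i$ on the transition probabilities $p(x_i,x_{i+1}) = b(x_i,x_{i+1})/b(x_i)$ do not depend on $K$ at all — they are intrinsic to the graph $b$ via the relation $b(x_i,x_{i+1})\simeq b(x_i)$, which by Theorem~\ref{thm::bxby} is equivalent to $x_i\leadsto x_{i+1}$ and is a property of $b$ and $U$, not of the exhausting set. Moreover, by Corollary~\ref{Corollary:essentialclasses} and Lemma~\ref{lemma:xtoypn}, once $K$ contains a directed path from $x$ to $y$ (with respect to $\pi_K^b$), enlarging $K$ only adds vertices, and the shortest such path — hence the product of finitely many fixed rationals bounding it — stabilizes. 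So for all sufficiently large $K$ in the exhaustion, one may use the same constant $c = c_0\cdots c_{n-1}$ in $v(x)\succeq c\,v(y)$. I expect the main obstacle to be the bookkeeping around the vertex $a$ (where $\Delta v = 0$ fails) and making the claim "the path length, and hence the constant, can be taken independent of $K$" fully rigorous: one needs that $x\to y$ with respect to $\pi_K^b$ is eventually equivalent to $x\to y$ with respect to $\pi_U^b$ as $K\uparrow V$, which follows since any fixed finite directed path lies in $K$ for $K$ large enough, and conversely restricting to $K$ cannot create new directed edges.
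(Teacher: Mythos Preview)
Your proposal is correct and follows essentially the same approach as the paper: both exploit harmonicity of $v$ on $K\setminus\{a\}$ to get the one-step bound $v(x_i)\succeq p(x_i,x_{i+1})\,v(x_{i+1})\succeq c\,v(x_{i+1})$ along a directed path, iterate, and then invoke the maximum principle for the two-sided estimate and the special role of $a$. The paper handles the case where the path passes through $a$ slightly more explicitly (stopping the iteration at $a$ and using $v(a)=1$ together with $v(y)\preceq 1$), which is exactly the bookkeeping you anticipate; your aside about $a$ forming a singleton class under $\pi_K^b$ is not quite right (only $V\setminus K$ is absorbing, not $a$), but this does not affect the argument.
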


\begin{proof}
If $x=a$ or if $y\not\in K$, then the statement is obvious since $0\preceq v\preceq 1=v(a)$ by maximum principle, Proposition~\ref{thm::maxpr}. So assume $x\in K$, since otherwise  $x$ is an absorbing state and there is no  $y\ne x$ with $x\to y$.
By Theorem \ref{thm::bxby} there exists a path $(x_1,\ldots,x_{n})$  in  $ K$ and $b(x_i)\simeq b(x_i,x_{i+1})$, i.e., ${b(x_i,x_{i+1})}/{b(x_i)}\succeq c$ for some $c\in \Bbb Q^+$. 

If $x_i\ne a$ for all $i$, we have by $\Delta v(x_i)=0$ that
$$
v(x_i)=\dfrac{\sum_{y\in V} b(x_i,y)v(y)}{b(x_i)}\succeq \frac{b(x_i,x_{i+1}) }{b(x_i)}v(x_{i+1})\succeq c v(x_{i+1}),
$$
since $v\succeq 0$ by the maximum principle. We conclude  $v(x)\succsim v(y)$ by induction. Further observe that the path above is included in any finite set including $ K $.
 
If there exists $i_0$ such that $x_{i_0}=a$, then we can conclude from  $\Delta v(x_i)=0, i< i_0$, in the same way as above, that $v(x)\succeq c v(a)=c$. Since $v(y)\preceq 1$ by maximum principle, Theorem \ref{thm::maxpr}, the statement follows.
\end{proof}

Recall that a strongly connected component with respect to $ U $ is a  maximal subset $W\subseteq U$  such that for any $x,y\in W$ 
there exists a path $
x=x_0\sim\dots \sim x_n=y
$ in $ U $ such that
$ b(x)\simeq b(x_i,x_{i+1})\simeq b(y) $ for all $ i $. These are exactly the strongly connected components of $ \prob_{U}^{b} $ which are not absorbing states, Corollary~\ref{Corollary:classesb(x)}.

\begin{theorem}\label{thm:Harnack}
Let $b$ be a  graph over $V$ and $ x,y\in K $.
\begin{itemize}
	\item [(a)] If $ K\subseteq  V $ is finite and connected and if $x$ and $y$ belong to the same strongly connected component with respect to $K$, then $\G_K(x)$ and $\G_K(y)$ are either both finite or both infinite. 
	\item [(b)] If $x$ and $y$ belong to the same strongly connected component with respect to $V$, then $\G(x)$ and $\G(y)$ are either both finite or both infinite. 
\end{itemize}
\end{theorem}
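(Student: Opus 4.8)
The plan is to reduce both statements to the single equivalence that $\G_K(a)<\infty$ holds if and only if $\c_K(a)\simeq b(a)$: this is immediate from $\G_K(a)=1/\map(\c_K(a)/b(a))$ together with $\c_K(a)/b(a)\preceq 1$ (Lemma~\ref{cor::GassolOfDP}) and the fact that $\map$ sends an at most finite element to $0$ exactly when that element is infinitesimal (Lemma~\ref{lem::mapProp}), so $\G_K(a)=\infty$ iff $\c_K(a)\pp b(a)$. Hence finiteness of $\G_K(a)$ depends only on the $\simeq$-class of the pair $\bigl(\c_K(a),b(a)\bigr)$, and part (a) reduces to showing $\c_K(x)\simeq\c_K(y)$ and $b(x)\simeq b(y)$ whenever $x,y$ lie in one strongly connected component with respect to $K$.

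For $b(x)\simeq b(y)$ I would use Corollary~\ref{Corollary:classesb(x)} to identify this component with the irreducible class of $\pi_K^{b}$ containing $x$, so that $x\to y$ and $y\to x$, and then apply Theorem~\ref{thm::bxby}(b). For the capacities I would feed the local Harnack inequality into the reciprocity identity of Lemma~\ref{lem::FiniteSolxy}: writing $v^{x}$, $v^{y}$ for the solutions of \eqref{dirpr} for $K$ with $a$ replaced by $x$, resp.\ $y$, that lemma gives $\c_K(x)\,v^{y}(x)=\c_K(y)\,v^{x}(y)$. Since $x$ and $y$ are in the same irreducible class, Lemma~\ref{lemma::Harnack} applied to $v^{x}$ yields $v^{x}(y)\simeq v^{x}(x)=1$, and symmetrically $v^{y}(x)\simeq 1$; substituting gives $\c_K(x)\simeq\c_K(y)$. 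Combined with $b(x)\simeq b(y)$ this gives $\c_K(x)/b(x)\simeq\c_K(y)/b(y)$, so one of these is infinitesimal iff the other is, which proves (a).

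For part (b) I would fix one path $x=x_0\sim\dots\sim x_m=y$ in $V$ along which $b(x)\simeq b(x_i,x_{i+1})\simeq b(y)$, witnessing that $x$ and $y$ are in one strongly connected component with respect to $V$. Being finite, this path lies inside $K_N$ for some $N$, and for every $n\ge N$ it still witnesses that $x$ and $y$ are in one strongly connected component with respect to $K_n$, so part (a) applies to each $K_n$. The extra ingredient is that the two-sided Harnack bounds are uniform as $K$ grows (the last sentence of Lemma~\ref{lemma::Harnack}), because the path and the rational lower bounds $b(x_i,x_{i+1})/b(x_i)\succeq c$ along it do not change once $K_N$ contains the path; tracking these constants through Lemma~\ref{lem::FiniteSolxy} then produces $c_1,c_2\in\mathbb{Q}^+$, independent of $n$, with $c_1\,\c_{K_n}(y)/b(y)\prec\c_{K_n}(x)/b(x)\prec c_2\,\c_{K_n}(y)/b(y)$ for all $n\ge N$ (using also $b(x)\simeq b(y)$, which does not depend on $n$). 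Applying the order-preserving ring morphism $\map$, which fixes $\mathbb{Q}$, gives $c_1\,\map(\c_{K_n}(y)/b(y))\le\map(\c_{K_n}(x)/b(x))\le c_2\,\map(\c_{K_n}(y)/b(y))$, i.e.\ $c_2^{-1}\G_{K_n}(y)\le\G_{K_n}(x)\le c_1^{-1}\G_{K_n}(y)$ in $[1,\infty]$; letting $n\to\infty$ and using the monotone convergence $\G_{K_n}\to\G$ yields $c_2^{-1}\G(y)\le\G(x)\le c_1^{-1}\G(y)$, so $\G(x)=\infty$ iff $\G(y)=\infty$.

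The main obstacle is precisely this uniformity in (b): applying part (a) to each $K_n$ in isolation only shows that $\G_{K_n}(x)$ and $\G_{K_n}(y)$ are simultaneously finite for each fixed $n$, which says nothing about the limits, so the whole point is to extract the comparison constants $c_1,c_2$ from a single path contained in all sufficiently large $K_n$ and to check that they genuinely survive both Lemma~\ref{lem::FiniteSolxy} and the passage through $\map$. Everything else is routine bookkeeping with the ordered-field relations $\pp$, $\precsim$, $\simeq$ and the properties of $\map$ collected in Lemma~\ref{lem::mapProp}.
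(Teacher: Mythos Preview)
Your proposal is correct and follows essentially the same route as the paper: combine the reciprocity identity of Lemma~\ref{lem::FiniteSolxy} with the local Harnack inequality (Lemma~\ref{lemma::Harnack}) to obtain $\c_K(x)\simeq\c_K(y)$, use Theorem~\ref{thm::bxby} for $b(x)\simeq b(y)$, and for part~(b) invoke the uniformity of the Harnack constants along a fixed witnessing path. Your write-up is in fact more explicit than the paper's own proof, particularly in part~(b), where you spell out how the uniform constants $c_1,c_2$ survive passage through $\map$ and the limit, whereas the paper simply asserts that ``the constants in the two sided estimate are uniform in the increasing sequence $(K_n)$'' and concludes.
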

\begin{proof}
By Lemma \ref{lem::FiniteSolxy} we have
$$ 
\dfrac{v^x(y)}{\c_K(x)}=\dfrac{v^y(x)}{\c_K(y)}.
$$
and by Lemma \ref{lemma::Harnack} $v^x(y)\simeq v^x(x)=1=v^y(y) \simeq v^y(x)$.  Hence $\c_K(x)\simeq \c_K(y)$. Since $b(x)\simeq b(y)$ by Theorem \ref{thm::bxby}, 
we have 
\begin{align*}
	\frac{\c_K(x)}{b(x)}\simeq \frac{\c_K(y)}{b(y)}
\end{align*}
and the first statement follows due to the definition of $\G_K$. For the second statement we observe that for an exhaustion $(K_{n})  $ the capacity $( \c_{K_{n}}) $ is monotone decreasing and the constants in the two sided estimate are uniform in the increasing sequence $ (K_{n}) $ by the Harnack inequality, Lemma~\ref{lemma::Harnack}. Thus, we infer the statement also for the limit.
\end{proof}

We end this section with a lemma that shows that the real part of the solution of the Dirichlet problem  for a vertex $ a $ is constant on all irreducible classes on which $ b $ is infinitely larger than $ b(a)$. Recall that the solution $ v $ of the  Dirichlet problem \eqref{dirpr} satisfies $ 0\preceq v\preceq  1$ by the maximum principle, Proposition~\ref{thm::maxpr} and we can therefore study $ \rho(v) $.

\begin{lemma}
Let $b$ be a  graph over $V$ and  $v$ be the solution of Dirichlet problem \eqref{dirpr} for a finite connected set $K$ and $a\in K$. Let $ x \in V$ be such that $b(x)\ss b(a)$. Then   $\map(v)$ is constant on the irreducible class which contains $ x $.
\end{lemma}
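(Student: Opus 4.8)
The plan is as follows. Write $\pi=\pi_{K}^{b}$ and let $C$ be the irreducible class of $x$. If $x\notin K$ then $x$ is absorbing, $C=\{x\}$ and $\map(v(x))=\map(0)=0$, so assume $x\in K$; since $b(x)\ss b(a)$, Theorem~\ref{thm::bxby}(c) gives $b(z)\simeq b(x)\ss b(a)$ for every $z\in C$, hence $a\notin C$, $C\subseteq K\setminus\{a\}$ and $\Delta v\equiv0$ on $C$. The preliminary observation I would use throughout is that $\map(v)$ solves the Dirichlet problem for $\pi$: for $z\in K\setminus\{a\}$ the identity $\Delta v(z)=0$ reads $v(z)=\sum_{y}p(z,y)v(y)$, a finite sum (local finiteness) of elements of $\Kf$, so applying the ring morphism $\map$ (Lemma~\ref{lem::mapProp}) yields $\map(v(z))=\sum_{y}\pi(z,y)\map(v(y))$; together with $\map(v)=1$ on $\{a\}$ and $\map(v)=0$ on $V\setminus K$ this exhibits $\map(v)$ as a $[0,1]$-valued solution of the $\pi$-Dirichlet problem on $K$.

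If $C$ is essential the conclusion is then immediate: $C$ is closed for $\pi$ and, being a subset of the finite set $K$, carries an irreducible stochastic matrix $\pi|_{C}$; by the above $\map(v)|_{C}$ is harmonic for $\pi|_{C}$, hence constant.

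If $C$ is not essential I would pass to the set $W$ of vertices reachable from $C$ in $\pi$. It is finite, because $C\subseteq K$ is finite, the graph is locally finite, and by Lemma~\ref{Cor:neighbours} a directed path from $C$ stays in $K$ until it possibly enters the absorbing set $V\setminus K$, which it cannot leave; moreover $W$ is closed for $\pi$ and $a\notin W$ (otherwise some $z\in C$ has $z\to a$, $a\not\to z$, forcing $b(z)\pp b(a)$ by Lemma~\ref{Cor:neighbours}). Hence $g:=\map(v)|_{W}$ is a $[0,1]$-valued harmonic function on the finite chain $(W,\pi|_{W})$; by the essential case it is constant, say $\equiv\gamma_{E}$, on each essential class $E$ of $W$, and the classical decomposition of bounded harmonic functions on finite chains gives $g(z)=\sum_{E}\gamma_{E}\,\mathbb{P}_{z}[\text{absorbed in }E]$. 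Since every vertex reachable from $C$ has weight $\succsim b(C)\ss b(a)$, the walk started in $C$ never hits $a$, so $\map(v)(z)$ for $z\in C$ is a convex combination of the numbers $\gamma_{E}$; it remains to show this combination does not depend on $z\in C$.

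This last point is the heart of the argument, and I expect it to be the main obstacle, since a bounded harmonic function on a general finite chain need not be constant on non-essential classes. One easy sub-case: by the local Harnack inequality (Lemma~\ref{lemma::Harnack}) $v$ is $\simeq$-comparable on $C$, so if $\map(v)$ vanishes at one point of $C$ then $v$ is infinitesimal on $C$ and — again by Harnack along paths $C\to E$ — on every essential class $E$ reachable from $C$, whence $g\equiv0$ on $W$. In the remaining case I would close the coupled system by a downward induction on the finite poset of irreducible classes of $W$: the vanishing boundary flux $\sum_{w\in E,\,y\notin E}b(w,y)(v(w)-v(y))=0$ of an essential class $E$, after dividing by the total weight of $\partial E$ and applying $\map$, expresses $\gamma_{E}$ as a convex combination of the values of $\map(v)$ on the classes feeding $E$ at the maximal $b$-scale (the lower-scale boundary terms becoming infinitesimal), and those values are, by the inductive hypothesis, constant — together with the Harnack two-sided bounds this forces $\map(v)$ to be constant on $C$. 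Making this ``feeder'' bookkeeping rigorous — genuinely using that escapes from a class go only to strictly higher classes or to $V\setminus K$, and that essential-class values are inherited from their top-scale feeders — is the technical crux.
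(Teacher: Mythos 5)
Your reductions are fine as far as they go (the case $x\notin K$, the fact that $b(z)\simeq b(x)\ss b(a)$ on the class $C$ so that $a\notin C$ and $\Delta v\equiv 0$ on $C$, and the observation that $\map(v)$ is a $[0,1]$-valued $\prob$-harmonic function off $a$). But the proof has a genuine gap exactly where you flag it: the constancy of $\map(v)$ on the non-essential class $C$ is never established. The absorption decomposition $g(z)=\sum_E\gamma_E\,\mathbb P_z[\text{absorbed in }E]$ cannot by itself give the conclusion, because a bounded $\prob$-harmonic function on a finite chain is in general \emph{not} constant on a non-essential class --- the absorption probabilities genuinely depend on the starting point $z\in C$. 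So some property of $v$ beyond harmonicity and boundedness must enter the argument, and your ``feeder bookkeeping'' sketch neither identifies that property nor carries out the proposed induction; as written it is a plan, not a proof.

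The missing input is the energy bound, and once it is used the entire apparatus collapses to a few lines --- this is the paper's proof. By the maximum principle (Proposition~\ref{thm::maxpr}) and Lemma~\ref{cor::GassolOfDP} one has $Q(v)=\c_K(a)\preceq b(a)$. If $x_1,x_2\in C$ with $\prob(x_1,x_2)>0$ had $\map(v(x_1))\neq\map(v(x_2))$, then $|v(x_1)-v(x_2)|\simeq 1$, and already the single energy term $b(x_1,x_2)(v(x_1)-v(x_2))^2\simeq b(x_1,x_2)\simeq b(x_1)\simeq b(x)$ would give $\c_K(a)/b(a)\succsim b(x)/b(a)$, which is infinitely large since $b(x)\ss b(a)$ --- contradicting $\c_K(a)/b(a)\preceq 1$. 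Constancy of $\map(v)$ across $\prob$-edges inside $C$ then propagates along directed paths, which may be taken inside $C$ by strong connectedness. I would replace the harmonic-function scheme by this energy argument; if you insist on your route, the induction over the class poset would still have to import the bound $Q(v)\preceq b(a)$ at some stage, at which point the direct argument is both shorter and complete.
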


\begin{proof} Let $ C $ be the irreducible class which contains $ x $. Then $ b(y)\simeq b(x) $ for all $ y\in C $ by Theorem~\ref{thm::bxby}.
Let $x_1, x_2\in C$ be such that $\prob(x_1,x_2)>0$. Then $b(x_1,x_2)\simeq b(x_1)\ss b(a)$. Assume that $\map(v(x_1))\ne \map(v(x_2))$, i.e., $|v(x_1)-v(x_2)|\simeq 1$. Then, since $ \c_{K}(a)=Q (v) $, we obtain
$$
\dfrac{\c_K(a)}{b(a)}=\dfrac{\sum_{x,y\in V} b(x_1,x_2)(v(x_1)-v(x_2))^2}{b(a)}\succsim \dfrac{b(x_1,x_2)}{b(a)}\ss 1,
$$
which is a contradiction to Lemma~\ref{cor::GassolOfDP}.
\end{proof}

\section{A  characterization for recurrence and transience}\label{sec:char}

In this section we study recurrence and transience in view of $ \G $. First of all notice that a first determination of whether a vertex is recurrent or transient can be made on whether it is contained in an essential component, Theorem~\ref{thm:rectranscomponent}. Hence, the open question remains  to determine the type of a state in essential components which will be completely characterized in terms of $ \G $ in Theorem~\ref{thm:charGesscomp}. However, we will indeed prove more namely that the implication that finiteness of $ \G(a) $ tells us that $ a $ is transient, Corollary~\ref{cor::rec}. One may wonder whether also the reverse direction holds outside of essential components. This is not the case and will be shown by counterexamples at the end of the section.

Before we start, we recall what is classically known for graphs $ b $ over the reals.  Then \cite[3.4 Theorem]{Woess09} tells us that 
\begin{align*}
		\dfrac{b(a)}{\c_K(a)}=\sum_{n=0}^{\infty}P_{K}^{n}1_{a}(a)\qquad\mbox{and}\qquad 	\dfrac{b(a)}{\c(a)}=\sum_{n=0}^{\infty}P^{n}1_{a}(a)
\end{align*}
for any finite subset $ K\subseteq V $ and $ a\in K $. In the second formula, the both sides are infinity if $ \c(a)=0 $ which is the case if $ a $ is a recurrent state. 

For the non-Archimedean case, these  equalities do only hold  if right hand sides converge, see \cite[Theorems 8.2 and 8.5]{OurPreprint}.

In this section we prove a more general statement, using $\G_K(a)$  (resp. $\G(a)$) instead of capacity and the real transition matrix of the graph instead of the non-Archimedean one.

\begin{theorem}
	Let $b$ be a  graph over $V$, let $K\subseteq V$ be finite and connected, $ \prob_{K}=\prob_{K}^{b} $ and $ \prob=\prob_{V}^{b} $. Then, for any $a\in K$ 
\begin{align*}
	\G_K(a)\ge \displaystyle\sum_{n=0}^\infty\prob^{(n)}_K(a,a)\qquad \mbox{and}\qquad
	G(a)\ge \sum_{n=0}^\infty\prob^{(n)}(a,a),
\end{align*}
	where infinity is allowed on both sides in the second inequality.
\end{theorem}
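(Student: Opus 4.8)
The plan is to transport the solution $v$ of the non-Archimedean Dirichlet problem to the reals via $\map$ and to recognise $\bar v:=\map\circ v$ there as a harmonic potential for the walk $\prob_K=\prob_K^b$, and then to compare it with the \emph{minimal} such potential. Fix a finite connected $K\subseteq V$ with $a\in K$ and let $v$ solve \eqref{dirpr} for $K$ and $a$; since $0\preceq v\preceq 1$ by the maximum principle, Proposition~\ref{thm::maxpr}, the function $\bar v=\map\circ v\colon V\to[0,1]$ is well defined. First I would check that $\bar v(a)=1$, $\bar v=0$ on $V\setminus K$, $\bar v\ge 0$, and
$$
\bar v(x)=\sum_{y\in V}\prob_K(x,y)\,\bar v(y)\qquad\text{for every }x\neq a .
$$
For $x\in K\setminus\{a\}$ this is the identity $\Delta v(x)=0$, i.e.\ $v(x)=\sum_y p(x,y)v(y)$, to which one applies $\map$: this map is an order-preserving ring morphism on $\Kf$ (Lemma~\ref{lem::mapProp}), the sum involved is finite by local finiteness, and $\map(p(x,y))=\prob_K(x,y)$ for $x\in K$ by Definition~\ref{def::rhoU}, so the displayed equation follows; for $x\in V\setminus K$ it is immediate since such $x$ is absorbing and $\bar v$ vanishes there. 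Moreover $\bar v(a)=\map(1)=1$ and $\bar v\ge 0$ by the maximum principle.

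Next I would invoke the classical potential theory of the (in general non-reversible, possibly absorbing) transition matrix $\prob_K$ over $V$, see e.g.\ \cite{Woess09}: writing $F(x)=\mathbb P_x[\text{the }\prob_K\text{-walk ever visits }a]$, one has $F\ge 0$, $F(a)=1$, $F(x)=\sum_y\prob_K(x,y)F(y)$ for $x\neq a$, and $F$ is the \emph{smallest} nonnegative function with these properties, while
$$
\sum_{n=0}^{\infty}\prob_K^{(n)}(a,a)=\frac{1}{1-U_a},\qquad U_a:=\sum_{y}\prob_K(a,y)F(y)\in[0,1],
$$
with the convention $1/0=\infty$. By minimality, the function $\bar v$ of the first step satisfies $\bar v\ge F$ pointwise, so $\sum_y\prob_K(a,y)\bar v(y)\ge U_a$. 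Combining this with Lemma~\ref{cor::GassolOfDP} and $\sum_y\prob_K(a,y)=1$ (Lemma~\ref{lem::absStates}), one obtains
\begin{align*}
\G_K(a)&=\frac{1}{\displaystyle\sum_{x\in V}\bigl(1-\bar v(x)\bigr)\map(p(a,x))}=\frac{1}{1-\sum_{y}\prob_K(a,y)\bar v(y)}\\
&\ge\ \frac{1}{1-U_a}=\sum_{n=0}^{\infty}\prob_K^{(n)}(a,a),
\end{align*}
which is the first inequality (when the left-hand denominator vanishes, $\G_K(a)=\infty$ and the claim is trivial). For the global inequality I would fix an exhaustion $(K_m)$ of $V$ with $a\in K_0$, so that $\G(a)=\lim_m\G_{K_m}(a)=\sup_m\G_{K_m}(a)$ by the monotonicity observed after the definition of $\G$. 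Since $\prob_{K_m}(x,\cdot)=\prob(x,\cdot)$ for $x\in K_m$, the $\prob_{K_m}$-walk and the $\prob$-walk $(X_n)$ couple so as to coincide up to the first exit from $K_m$, whence for each fixed $n$
$$
\prob_{K_m}^{(n)}(a,a)=\mathbb P_a\bigl[X_n=a,\ X_0,\dots,X_n\in K_m\bigr]\ \uparrow\ \mathbb P_a[X_n=a]=\prob^{(n)}(a,a)\qquad(m\to\infty)
$$
by continuity from below. Hence for every $N$,
$$
\sum_{n=0}^{N}\prob^{(n)}(a,a)=\lim_{m}\sum_{n=0}^{N}\prob_{K_m}^{(n)}(a,a)\le\lim_{m}\sum_{n=0}^{\infty}\prob_{K_m}^{(n)}(a,a)\le\lim_{m}\G_{K_m}(a)=\G(a),
$$
using the first inequality for each $K_m$, and letting $N\to\infty$ gives $\sum_{n=0}^{\infty}\prob^{(n)}(a,a)\le\G(a)$.

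The one genuinely delicate point is the bookkeeping in the first step: one must verify carefully that applying $\map$ to $\Delta v=0$ really produces $\prob_K$-harmonicity — this uses both that $\map$ commutes with finite sums and products and the precise form of Definition~\ref{def::rhoU} of $\prob_K^b$ — and one must note that $\bar v$ need only be \emph{compared} with the minimal hitting potential $F$, not equal to it. It is exactly this gap $\bar v\ge F$ that makes the classical equality $b(a)/\c_K(a)=\sum_n\prob_K^{(n)}(a,a)$ become an inequality in our setting, and that later accounts for the failure of the converse shown in Examples~\ref{ex1} and~\ref{ex2}. The remaining passage to the limit is routine monotone convergence.
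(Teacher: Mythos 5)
Your proof is correct, but it takes a genuinely different route from the paper's. The paper stays entirely inside the non-Archimedean field: it uses the invertibility of $\Delta_K=I_K-P_K$ (Lemma~\ref{lem:RenDP}) and the telescoping identity $\Delta_K^{-1}-\sum_{n=0}^{N}P_K^{n}=\Delta_K^{-1}P_K^{N+1}\succeq 0$ to obtain $b(a)/\c_K(a)=\Delta_K^{-1}1_a(a)\succeq\sum_{n=0}^{N}p_K^{(n)}(a,a)$ directly in $\K$, and only then applies $\map$ and lets $N\to\infty$; the second inequality is again a monotone limit along an exhaustion. You instead push the Dirichlet solution forward to $\mathbb{R}$ first, recognize $\bar v=\map\circ v$ as a nonnegative $\prob_K$-harmonic potential with the right boundary data, and compare it with the \emph{minimal} hitting potential $F$ via the classical identity $\sum_n\prob_K^{(n)}(a,a)=1/(1-U_a)$. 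Both arguments are sound. The paper's is shorter and more self-contained, needing only the non-negativity of $\Delta_K^{-1}$ and $P_K$ rather than the minimality characterization of hitting probabilities from Markov chain theory. Yours is more conceptual: it locates the source of the inequality precisely in the gap $\bar v\ge F$, i.e., in the fact that the real part of the non-Archimedean equilibrium potential need not be the minimal harmonic potential of the real walk, which indeed explains why the converse fails in Examples~\ref{ex1} and~\ref{ex2}. Your limiting argument for the second inequality (monotone convergence of $\prob_{K_m}^{(n)}(a,a)$ to $\prob^{(n)}(a,a)$ for each fixed $n$, using local finiteness so that every closed path of length $n$ is eventually contained in some $K_m$) is exactly the content that the paper compresses into the phrase ``take the monotone limit''; spelling it out as you do is a welcome addition.
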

\begin{proof}
We calculate using that $ \Delta_{K}=I_{K}-P_{K} $ is invertible and a telescopic argument
	\begin{multline*}
		\Delta^{-1}_{K} 1_a(a)-\sum_{n=0}^{N}P_{K}^n 1_a(a)=\left(\Delta^{-1}_{K}-\sum_{n=0}^{N}P_{K}^n\right) 1_a(a)\\
		=\Delta^{-1}_{K}\left(I_{K}-(I_{K}-P_{K})\sum_{n=0}^{N}P_{K}^n\right) 1_a(a)=\left(\Delta^{-1}_{K}P_{K}^{N+1}\right) 1_a(a)\succeq 0,
	\end{multline*}
	for all $ N $ due to the non-negativity in $\mathbb{K}$ of both matrices, Lemma~\ref{lem:RenDP} and the definition of $P_K$. 
	Hence, as a consequence of Lemma~\ref{lem:RenDP} and the definition of the capacity, we get
	$$
\frac{1}{\frac{\c_K(a)}{b(a)}}=	\dfrac{b(a)}{\c_K(a)}=
	\Delta^{-1}_{K} 1_a(a)
	\succeq
	\sum_{n=0}^{N}P_K^n  1_a(a).
	$$
We take $\rho$ on both sides and then take the limit $ N\to\infty $. This gives the first inequality. For the second inequality, we let $ K $ be equal to an exhaustion $ (K_{n}) $ and take the monotone limit.
\end{proof}

This immediately gives rise to the following corollary.

\begin{corollary}\label{cor::rec} Let $ a\in V $.
	\begin{itemize}
		\item [(a)] If $ a $ is a recurrent state, then $\G(a)=\infty$.
		\item [(b)] If $ \G(a)<\infty $, then $ a $ is a transient state.
	\end{itemize} 
\end{corollary}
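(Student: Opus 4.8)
The plan is to derive Corollary~\ref{cor::rec} as a direct consequence of the preceding theorem combined with Theorem~\ref{thm:defrectr}, which characterizes recurrence and transience of a state $a$ for the real transition matrix $\pi=\pi_{V}^{b}$ in terms of convergence or divergence of $\sum_{n=0}^{\infty}\pi^{(n)}(a,a)$.

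For part (b), suppose $\G(a)<\infty$. By the theorem immediately above, $\G(a)\ge\sum_{n=0}^{\infty}\pi^{(n)}(a,a)$, so the series $\sum_{n=0}^{\infty}\pi^{(n)}(a,a)$ converges. By Theorem~\ref{thm:defrectr}(a), this is equivalent to $a$ being a transient state for $\pi$, which by definition means $a$ is a transient vertex with respect to $V$. For part (a), suppose $a$ is recurrent. Then by Theorem~\ref{thm:defrectr}(b) we have $\sum_{n=0}^{\infty}\pi^{(n)}(a,a)=\infty$, and since $\G(a)\ge\sum_{n=0}^{\infty}\pi^{(n)}(a,a)$ by the theorem, we conclude $\G(a)=\infty$. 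Note that (a) is in fact the contrapositive of (b), so it suffices to prove one of the two.

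There is essentially no obstacle here: all the work has been done in the theorem preceding the corollary (the telescopic/resolvent identity showing $\Delta_{K}^{-1}1_{a}(a)\succeq\sum_{n=0}^{N}P_{K}^{n}1_{a}(a)$ in $\mathbb{K}$, followed by applying $\rho$ and passing to the exhaustion limit) and in the classical dichotomy recorded in Theorem~\ref{thm:defrectr}. The only point requiring a moment's care is that the inequality $\G(a)\ge\sum_{n=0}^{\infty}\pi^{(n)}(a,a)$ is to be read with the convention that infinity is allowed on both sides, so that the implications go through uniformly whether or not the quantities are finite. Thus the proof is a one-line invocation of the theorem and Theorem~\ref{thm:defrectr}.

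\begin{proof}
Both statements follow immediately from the previous theorem together with Theorem~\ref{thm:defrectr}. Indeed, by the previous theorem $\G(a)\ge\sum_{n=0}^{\infty}\prob^{(n)}(a,a)$, with infinity allowed on both sides. If $a$ is recurrent, then $\sum_{n=0}^{\infty}\prob^{(n)}(a,a)=\infty$ by Theorem~\ref{thm:defrectr}(b), hence $\G(a)=\infty$, which proves (a). If $\G(a)<\infty$, then $\sum_{n=0}^{\infty}\prob^{(n)}(a,a)<\infty$, so $a$ is transient by Theorem~\ref{thm:defrectr}(a), which proves (b).
\end{proof}
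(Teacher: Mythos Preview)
Your proof is correct and matches the paper's approach: the corollary is stated there as an immediate consequence of the preceding inequality $\G(a)\ge\sum_{n=0}^{\infty}\prob^{(n)}(a,a)$, and your explicit invocation of Theorem~\ref{thm:defrectr} simply spells out why this inequality yields the dichotomy. There is nothing to add.
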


Let a graph $ b $ and $ \pi=\pi_{\subsV}^{b} $ for $ \subsV      \subseteq     V$. Then, Theorem~\ref{thm:infinitevsdirected}  gives the existence of $ \beta: V\to \mathbb{R}^{+} $ such that $
\prob(x,y)\rw(x)=\prob(y,x)\rw(y),
$
for $ a\in C\subseteq V $ in an irreducible class $C$, where  $ \beta $ can be chosen such that for some fixed $ a\in C $ and all other $ x\in C $ $$  \beta (a)=1 \qquad\mbox{and}\qquad  \beta(x)=\rho \left(\frac{b(x)}{b(a)}\right) . $$  This allows us to define 
\begin{align*}
	\beta(x,y)=\prob(x,y)\rw(x) 
\end{align*}
for all $ x,y\in V $. Then, $ \beta $
 is symmetric on irreducible classes and as $ \pi $ is a transition matrix which sums up to $ 1 $ about each vertex
 \begin{align*}
 	\beta(x)=\sum_{y\in V}\pi(x,y)\beta(x) =\sum_{y\in V}\beta(x,y).
 \end{align*}
Observe that for two irreducible classes $ C $ and $ C' $ and $ x\in C $ and $ y\in C' $ such that $ x\leadsto y $ we have $ \beta(x,y)>0 $ but $ \beta(y,x) =0$. Hence, $ \beta $ is not necessarily symmetric on $ V\times V $ whenever there is more than one irreducible class.

We let $ \Lambda $ be the operator for $ f:V\to \mathbb{R} $
\begin{align*}
	\Lambda f(x) =\frac{1}{\beta(x)}	\sum_{y\in V}\beta(x,y)(f(x)-f(y)).
\end{align*}
Then, the restriction $ \Lambda_{C} $  of $ \Lambda $ to any irreducible class $ C $ gives rise to a symmetric graph Laplacian over the reals, confer \cite{KLW_book}. Hence, by the classical theory of graphs over the reals \cite{Woess00}, \cite{Woess09} or  \cite[Section~6]{KLW_book} there is a Green's function $$  \Gamma_{C} :C\times C\to  (0,\infty] . $$
According to this theory $ \Gamma_{C} $ is finite on all of $ C $ if it is finite on one vertex of $ C $. Furthermore, if $ C $ is not an essential component, then there is are $ x\in C $ and $ y\in V\setminus C $ such that $ x\leadsto y $ in which case we have that $ \Gamma_{C} $ is finite.

\begin{lemma}\label{lem::oneclassgraph}
	Let $b$ be a  graph over $V$ and $ \beta $ the real graph introduced above for $ a\in V $ in an essential component $ C $. Then, 
	\begin{align*}
		\G(a)=\Gamma_{C}(a,a).
	\end{align*}
\end{lemma}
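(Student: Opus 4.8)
The plan is to show that the restricted non-Archimedean Dirichlet problem on an exhaustion, after applying $\rho$, reduces exactly to the sequence of finite truncated Dirichlet problems for the real Laplacian $\Lambda_C$ on the essential component $C$, whose limit is by definition $\Gamma_C(a,a)$. First I would recall from the displayed computation just above that $\G(a)=\limsup_n \G_{K_n}(a)$ is in fact a monotone limit, and from the previous theorem (the one bounding $\G_K(a)$ by $\sum_n \pi_K^{(n)}(a,a)$) together with Lemma~\ref{lem:RenDP} that $b(a)/\c_K(a)=\Delta_K^{-1}1_a(a)$. Taking $\rho$ and using that $\rho$ is an order-preserving ring morphism (Lemma~\ref{lem::mapProp}), the quantity $\G_{K}(a)=1/\rho(\c_K(a)/b(a))$ equals $\rho\big(\Delta_K^{-1}1_a(a)\big)$ whenever that real number is finite; and by Theorem~\ref{thm:Harnack}(a) finiteness is a property of the strongly connected component of $a$ inside $K$, which stabilizes to $C$ for large $n$.

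The key step is to identify $\rho\big((\Delta_{K_n})^{-1}1_a(a)\big)$ with $(\Lambda_{C\cap K_n}^{(n)})^{-1}1_a(a)$ — i.e.\ with the $(a,a)$-entry of the inverse of the Dirichlet Laplacian of the real graph $\beta$ restricted to $C\cap K_n$. For this I would use the matrix identity
\begin{align*}
	\Delta_{K}^{-1}1_a(a) = \sum_{n=0}^{\infty} P_K^{n}1_a(a)
\end{align*}
in the non-Archimedean field — valid here because, by the local Harnack inequality, Lemma~\ref{lemma::Harnack}, and Theorem~\ref{thm::bxby}, the relevant entries are at most finite and the telescoping estimate from the previous theorem forces convergence once $a$ lies in an essential component so that $\Gamma_C(a,a)<\infty$ is the governing finite case. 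Applying $\rho$ termwise (legitimate on finite partial sums, with the tail controlled uniformly by the Harnack constants being uniform in $K$, Lemma~\ref{lemma::Harnack}) turns $\rho(p^{(n)}_K(a,a))$ into $\pi_K^{(n)}(a,a)$, which by Lemma~\ref{lemma::pxygec} and Corollary~\ref{Corollary:essentialclasses} is supported on paths staying inside $C$; hence it coincides with the $n$-step return probability of the reversible real walk with weights $\beta(x,y)=\pi(x,y)\beta(x)$. Summing over $n$ and recognising $\sum_n \pi^{(n)}(a,a)$ as $\beta(a)\cdot$(Green kernel) $=\Gamma_C(a,a)$ via the classical identity $b(a)/\c(a)=\sum_n P^n1_a(a)$ transported to the real setting (\cite[3.4 Theorem]{Woess09}, as quoted at the start of this section), gives the claim.

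Finally I would take the limit along the exhaustion: $\G_{K_n}(a)=\rho\big(\Delta_{K_n}^{-1}1_a(a)\big)$ increases to $\G(a)$, while the corresponding real Green kernels $\Gamma_{C\cap K_n}(a,a)$ increase to $\Gamma_C(a,a)$ by monotone convergence of the truncated real Dirichlet problems (classical potential theory, \cite[Section~6]{KLW_book}); since the two increasing sequences agree termwise for large $n$, their limits agree, so $\G(a)=\Gamma_C(a,a)$. The main obstacle I anticipate is the bookkeeping in the termwise application of $\rho$ to the Neumann series: one must argue that the infinitesimal errors accumulated over the (possibly infinitely many, but for each fixed $n$ finitely many by local finiteness) terms do not add up to something non-infinitesimal, and that passing $\rho$ through the infinite sum is justified — this is exactly where the uniform Harnack constants and the essential-component hypothesis (ensuring $\Gamma_C(a,a)<\infty$, hence genuine convergence rather than divergence) do the work, and where one has to be careful that $\rho$ of a convergent non-Archimedean series need not equal the sum of the $\rho$'s of the terms unless convergence is, in the appropriate sense, uniform.
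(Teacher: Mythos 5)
Your overall strategy differs from the paper's, and it contains a genuine gap at its central step. You want to use the identity $\Delta_K^{-1}1_a(a)=\sum_{n=0}^{\infty}P_K^{n}1_a(a)$ \emph{in the non-Archimedean field} and then push $\map$ through the infinite sum. But this series generally does not converge in the order topology of $\K$: if it did, then by Lemma~\ref{lem::mapProp}(4) the real parts of its partial sums would be eventually constant, i.e.\ $\prob_K^{(n)}(a,a)=0$ for all large $n$, which fails whenever $a$ lies in a nontrivial irreducible class (exactly the situation of the lemma). This is precisely why the paper states that the Green's-function identities ``only hold if the right hand sides converge'' and why the theorem preceding this lemma establishes only the inequality $\G_K(a)\ge\sum_n\prob_K^{(n)}(a,a)$ rather than equality. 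Your proposed remedies (uniform Harnack constants, the essential-component hypothesis) do not address this, and the claim that lying in an essential component ensures $\Gamma_C(a,a)<\infty$ is backwards: essential components may well be recurrent, and the lemma must cover the case $\G(a)=\Gamma_C(a,a)=\infty$, so you cannot assume finiteness as ``the governing case.''

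The paper avoids infinite series in $\K$ altogether. It applies $\map$ to a single field element: the solution $v_n$ of the Dirichlet problem \eqref{dirpr} on $K_n$, which satisfies $0\preceq v_n\preceq 1$ by the maximum principle. Setting $\nu_n=\map(v_n)$ and using that $\map$ is a ring morphism together with local finiteness, one checks $\Lambda_C\nu_n(x)=\map(\Delta v_n(x))$, so $\nu_n$ solves the \emph{real} Dirichlet problem for $\Lambda_C$ on $K_n\cap C$ (here the essential-component hypothesis enters to guarantee no edges leave $C$). Only then does the classical real theory give $\Gamma_C(a,a)=\lim_n 1/\Lambda_C\nu_n(a)=\lim_n 1/\map\bigl(\c_{K_n}(a)/b(a)\bigr)=\G(a)$. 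If you want to keep a Neumann-series flavour, the series must be summed over $\mathbb{R}$ after applying $\map$ to each finite-level object, never inside $\K$; as written, your key identity is false and the argument does not go through.
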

\begin{proof}
Observe that for $ x,y\in V $
\begin{align*}
	\frac{\beta(x,y)}{\beta(x)} =\pi(x,y)=\rho\left(\frac{b(x,y)}{b(x)}\right).
\end{align*}
Since $ C $ is an essential component, we have $ b(x,y)=0 $ for all $ x\in C $ and $ y\in V\setminus C $.

Let $ (K_{n}) $ be an exhaustion of $ C $ and $ a\in K_{0} $. Let $ v_{n} $ be the solution of the Dirichlet problem \eqref{dirpr} for $ K_{n} $ and $ a $.  Since $ 0\preceq v_{n}\preceq 1 $ by the maximum principle, Proposition~\ref{thm::maxpr}, we can define $$  \nu_{n}=\rho (v_{n}) .$$ 
Then, $ \nu_{n} $ satisfies $ \nu_{n}(a) =1$, $ \nu_{n}=0 $ on $ V\setminus K_{n} $ and for $ x\in K_{n} $
\begin{align*}
	\Lambda_{C}\nu_{n}(x) =	\sum_{y\in V} \frac{\beta(x,y)}{\beta(x)}(\nu_{n}(x)-\nu_{n}(y)) = \rho \left(\Delta v_{n}(x)\right).
\end{align*}
Hence, $ \nu_{n} $ solves the Dirichlet problem for $ \Lambda_{C} $ for $ K_{n} \cap C $ and $ a $ over the reals. Thus, by the theory of graphs over the reals, we have $ \Lambda_{C}\nu_{n}(a)>0 $ and as $ (K_{n}\cap C) $ exhausts $ C $, we have
\begin{align*}
	\Gamma_{C}(a)=\lim_{n\to\infty}\frac{1}{\Lambda_{C}\nu_{n}(a)} = \lim_{n\to\infty}\frac{1}{\rho(\Delta v_{n}(a))}= \lim_{n\to\infty}\frac{1}{\rho\left(\frac{\c_{K_{n}}(a)}{b(a)}\right)} = \G(a).
\end{align*}
This finishes the proof.
\end{proof}

Recall that the type of a vertex -- recurrent or transient -- is invariant within the strongly connected component. We come now to the last main result of this paper which characterizes the type of the states in the essential components.

\begin{theorem}\label{thm:charGesscomp}
		Let $b$ be a  graph over $V$,  $ \prob=\prob_{V}^{b} $ and $ C $  an essential component.
		\begin{itemize}
			\item [(a)]  $ C $ is recurrent if and only if $\G(a)=\infty$ for some (all) $a\in C$.
			\item [(b)]  $ C $ is transient if and only if $\G(a)<\infty$ for some (all) $a\in C$.
		\end{itemize}
\end{theorem}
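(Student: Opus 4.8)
The plan is to transfer the question completely to the real graph $\beta$ on $C$ constructed just before Lemma~\ref{lem::oneclassgraph} and then invoke the classical recurrence/transience dichotomy over $\mathbb{R}$. Fix $a\in C$. The first thing I would record is that, $C$ being essential, it is an essential (absorbing) class of $\prob=\prob_V^b$ by Corollary~\ref{Corollary:essentialclasses}: from any $x\in C$ only vertices of $C$ can be reached, so $\prob(x,y)>0$ forces $y\in C$ and hence $\sum_{y\in C}\prob(x,y)=\sum_{y\in V}\prob(x,y)=1$. Thus $\prob|_{C\times C}$ is again an honest transition matrix, the walk started at $a$ never leaves $C$, and $\prob^{(n)}(a,a)=\prob^{(n)}_C(a,a)$ for every $n$. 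Moreover $\prob|_{C\times C}$ is precisely the random walk of the symmetric real graph $\beta$ restricted to $C$, since its one-step probabilities are $\beta(x,y)/\beta(x)=\prob(x,y)$ for $x,y\in C$, and $\beta|_{C}$ is a connected locally finite graph over the reals.

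Next I would run the chain of identities. By Lemma~\ref{lem::oneclassgraph}, $\G(a)=\Gamma_C(a,a)$, where $\Gamma_C$ is the Green's function of $\Lambda_C$; and by the classical theory for reversible random walks over the reals --- the identity $b(a)/\c(a)=\sum_n P^n1_a(a)$ recalled at the start of this section, now applied to the real graph $\beta$ on $C$ --- we get $\Gamma_C(a,a)=\sum_{n=0}^\infty\prob^{(n)}_C(a,a)=\sum_{n=0}^\infty\prob^{(n)}(a,a)$, with both sides simultaneously finite or infinite. Plugging this into Theorem~\ref{thm:defrectr} gives that $a$ is recurrent iff $\sum_n\prob^{(n)}(a,a)=\infty$ iff $\G(a)=\infty$, and $a$ is transient iff $\sum_n\prob^{(n)}(a,a)<\infty$ iff $\G(a)<\infty$. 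This settles both implications of (a) for that one vertex $a$, and since every vertex is recurrent or transient, (b) is just the logical negation of (a).

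For the \emph{some (all)} clause I would combine two invariance statements: recurrence and transience are properties of the irreducible class, so ``$C$ is recurrent'' means that every vertex of $C$ is recurrent (similarly for transient); and by Theorem~\ref{thm:Harnack}(b), $\G$ is finite at all vertices of $C$ or infinite at all vertices of $C$. Together with the single-vertex equivalence above this upgrades ``for some $a\in C$'' to ``for all $a\in C$'' and finishes the proof. I do not foresee a real obstacle --- the analytic substance is already contained in Lemma~\ref{lem::oneclassgraph}, and the only place asking for a little care is the normalization in the chain of identities, i.e.\ making sure the Green's function $\Gamma_C$ coming from the real-graph capacity in Lemma~\ref{lem::oneclassgraph} is exactly the one for which the classical identity reads $\Gamma_C(a,a)=\sum_n\prob^{(n)}(a,a)$, rather than a rescaling of it, so that finiteness of $\G(a)$ is genuinely equivalent to convergence of $\sum_n\prob^{(n)}(a,a)$.
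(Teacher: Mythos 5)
Your proof is correct and follows essentially the same route as the paper: the substance in both cases is Lemma~\ref{lem::oneclassgraph} combined with the classical identity $\Gamma_C(a,a)=\sum_{n=0}^\infty\prob^{(n)}(a,a)$ on the essential class and the criterion of Theorem~\ref{thm:defrectr}, plus Theorem~\ref{thm:Harnack} for the ``some/all'' clause. The only (immaterial) difference is that you deduce both implications from this single equality, whereas the paper handles one direction via the general inequality of Corollary~\ref{cor::rec}.
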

\begin{proof}
	The ``if'' direction follows from Corollary~\ref{cor::rec} and Theorem~\ref{thm:Harnack}.

	For the ``only if'' direction follows from Lemma~\ref{lem::oneclassgraph} and the fact that $ \Gamma_{C}(a,a)=\sum_{n=0}^{\infty} \prob^{(n)}(a,a) $ by the theory of symmetric real graphs since $ C $ is assumed to be an essential component.
\end{proof}

We can summarize the findings of this paper on recurrence and transience in the following final corollary.

\begin{corollary}\label{cor:charGesscomp}
	Let $b$ be a  graph over $V$,  $ \prob=\prob_{V}^{b} $ and $ C $  a strongly connected  component.
	\begin{itemize}
		\item [(a)]  $ C $ is recurrent if and only if $ C $ is essential and  $\G(a)=\infty$ for some (all) $a\in C$.
		\item [(b)]  $ C $ is transient if and only if $ C $ is non-essential or $\G(a)<\infty$ for some (all) $a\in C$.
	\end{itemize}
\end{corollary}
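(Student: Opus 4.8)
The plan is to assemble this statement directly from the two structural results already in hand: the component criterion of Theorem~\ref{thm:rectranscomponent} and the $\G$-characterization on essential components of Theorem~\ref{thm:charGesscomp}, combined with the classical facts (recorded in the excerpt) that recurrence and transience partition the states and that this type is constant along a strongly connected component, as well as Theorem~\ref{thm:Harnack}(b) which makes the finiteness of $\G$ independent of the base vertex within $C$.

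First I would prove (a). For the forward direction, assume $C$ is recurrent. By Theorem~\ref{thm:rectranscomponent}(a) the component $C$ is essential, and then Theorem~\ref{thm:charGesscomp}(a) gives $\G(a)=\infty$ for some (equivalently, by Theorem~\ref{thm:Harnack}(b), for all) $a\in C$. Conversely, if $C$ is essential and $\G(a)=\infty$ for some $a\in C$, then Theorem~\ref{thm:charGesscomp}(a) immediately yields that $C$ is recurrent.

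Next I would deduce (b). Since a state is transient precisely when it is not recurrent, and this type is constant on $C$, part (b) is the logical negation of (a): $C$ is transient if and only if it is not the case that $C$ is essential with $\G(a)=\infty$, i.e.\ if and only if $C$ is non-essential or $\G(a)<\infty$. Alternatively one argues directly by cases: if $C$ is non-essential, then Theorem~\ref{thm:rectranscomponent}(b) gives transience; if $C$ is essential, then Theorem~\ref{thm:charGesscomp}(b) gives that $C$ is transient if and only if $\G(a)<\infty$; combining the two cases yields the claim.

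There is no real obstacle here — this is a formal case distinction — but the one point deserving care is the ``some (all)'' phrasing: one must invoke Theorem~\ref{thm:Harnack}(b) to know that finiteness or infiniteness of $\G$ on $C$ does not depend on the chosen vertex, and the invariance of the recurrence/transience type within a strongly connected component to pass between statements about a vertex and statements about $C$ itself.
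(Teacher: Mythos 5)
Your proposal is correct and follows essentially the same route as the paper, which deduces the corollary directly from Theorem~\ref{thm:rectranscomponent} and Theorem~\ref{thm:charGesscomp}; you merely spell out the case distinction and the ``some (all)'' point (via Theorem~\ref{thm:Harnack}(b) and the class-invariance of the type) more explicitly than the paper does.
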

\begin{proof}
	This follows directly from Theorem~\ref{thm:rectranscomponent} and Theorem~\ref{thm:charGesscomp}.
\end{proof}

Of course, Corollary~\ref{cor::rec}  and Theorem~\ref{thm:charGesscomp} give rise to the question if one can characterize recurrence of a state $ a $ in terms of $ \G(a)=\infty $. We demonstrate by examples that this is not the case. First we give an example of a finite graph with a boundary.

\begin{example}\label{ex1}
Let us consider a path graph $ b $ over $ V=\{1,\ldots,5\} $ as on Figure \ref{fig::P52}, with $a=3$ and $U=\{1,2,3,4\}$. The second graph shown in  Figure \ref{fig::P52} is the directed real graph arising from $ b $. By series law \cite[Corollary~6]{Muranova2},  we have
$$
\frac{\c_K(a)}{b(a)}=\dfrac{1}{2(\tau^{-1}+1)}=\dfrac{\tau}{2+2 \tau}
$$
and, hence,
$$
\G_K(a)=\dfrac{1}{\displaystyle\map\left( \frac{\c_K(a)}{b(a)}\right)}=\infty.
$$
On the other hand, $a$ is a transient state which is clear from Figure \ref{fig::P52} as there is no path to return to $ a $.

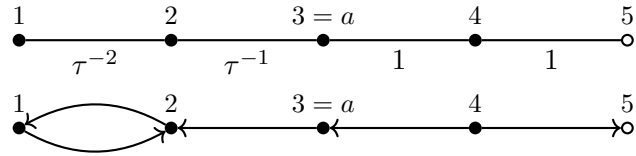
\begin{figure}[H]
\centering
\begin{tikzpicture}[auto,node distance=2cm,
                    thick,main node/.style={circle, draw, fill=black!100,
                        inner sep=0pt, minimum width=4pt}]

  \node[main node] (1) [label={[above]$1$}]{};
  \node[main node] (2) [right of=1,label={[above]$2$}] {};
  \node[main node] (3) [right of=2,label={[above]$3=a$}] {};
  \node[main node] (4) [right of=3,label={[above]$4$}] {};
  \node[main node, fill=none] (5) [right of=4,label={[above]$5$}] {};

  \path[every node/.style={font=\sffamily\small}]
    (2) edge node [] {$\tau^{-2}$} (1)
    (3) edge node [] {$\tau^{-1}$} (2)
    (4) edge node [] {$1$} (3)
    (5) edge node [] {$1$} (4);

\end{tikzpicture}
\begin{tikzpicture}[auto,node distance=2cm,
                    thick,main node/.style={circle, draw, fill=black!100,
                        inner sep=0pt, minimum width=4pt}]

  \node[main node] (1) [label={[above]$1$}]{};
  \node[main node] (2) [right of=1,label={[above]$2$}] {};
  \node[main node] (3) [right of=2,label={[above]$3=a$}] {};
  \node[main node] (4) [right of=3,label={[above]$4$}] {};
  \node[main node, fill=none] (5) [right of=4,label={[above]$5$}] {};

  \path[every node/.style={font=\sffamily\small}]
    (1) edge[bend right, color=black,  ->]  (2)
   (1) edge[bend left, color=black,  <-]  (2)
   (2) edge[color=black,  <-]  (3)
    (3) edge[color=black, <-]   (4)
    (4) edge[color=black,  ->]  (5);

\end{tikzpicture}
\caption{Finite path graph, $a$ is transient, $\G_K(a)=\infty$.}
\label{fig::P52}
\end{figure}
\end{example}

Secondly, we give an example of an infinite graph which shows a transient state can allow for both $ \G(a) $ being finite or infinite.

\begin{example}\label{ex2}
Let us consider two path graphs as at Figure \ref{Fig1inf} with $b(k,k+1)=\tau^{3-k}$ for $k\ge 4$. Let us consider two cases: $b(2,3)=\tau$ and $b(2,3)=\tau^2$. The state $ a $  will be  in both cases transient, see again Figure \ref{Fig1inf}, while 
$$
\G(a)=\dfrac{1}{\displaystyle\map \left(\dfrac{1}{(b(2,3)^{-1}+\tau^{-1}+1+\sum_{i=0}^\infty \tau^i)\tau}\right)}=\begin{cases}
2&: \mbox {  if } b(2,3)=\tau,\\
\infty&: \mbox {  if } b(2,3)=\tau^2.
\end{cases}
$$
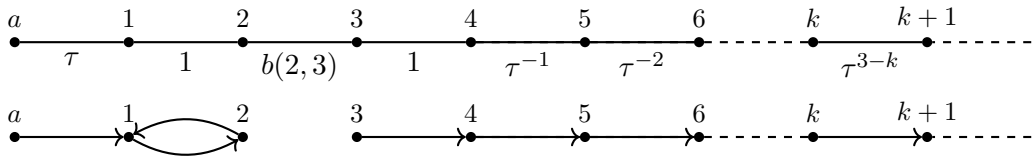
\begin{figure}[H]
\centering
\begin{tikzpicture}[auto,node distance=1.5cm,
                    thick,main node/.style={circle, draw, fill=black!100,
                        inner sep=0pt, minimum width=3pt}]

  \node[main node] (0) [label={[above]$a$}]{};
  \node[main node] (1) [right of=0,label={[above]$1$}]{};
  \node[main node] (2) [right of=1,label={[above]$2$}] {};
  \node[main node] (3) [right of=2,label={[above]$3$}] {};
  \node[main node] (4) [right of=3,label={[above]$4$}] {};
  \node[main node] (5) [right of=4,label={[above]$5$}] {};
  \node[main node] (6) [right of=5,label={[above]$6$}] {};
  \node[main node] (7) [right of=6,label={[above]$k$}] {};
  \node[main node] (8) [right of=7,label={[above]$k+1$}] {};
  \node[draw = none] (9) [right of=8,label={[above]}] {};

  \path[every node/.style={font=\sffamily\small}]
    (1) edge node [bend left] {$\tau$} (0)
    (2) edge node [bend left] {$1$} (1)
    (3) edge node [bend left] {$b(2,3)$} (2)
    (4) edge node [bend left] {$1$} (3)
    (5) edge node [bend left] {$\tau^{-1}$} (4)
    (6) edge node [bend left] {$\tau^{-2}$} (5)
    (8) edge node [bend left] {$\tau^{3-k}$} (7);

 \draw[dashed] (5) to (4);
 \draw[dashed] (6) to (5);
 \draw[dashed] (7) to (6);
 \draw[dashed] (9) to (8);

\end{tikzpicture}
\begin{tikzpicture}[auto,node distance=1.5cm,
                    thick,main node/.style={circle, draw, fill=black!100,
                        inner sep=0pt, minimum width=3pt}]

  \node[main node] (0) [label={[above]$a$}]{};
  \node[main node] (1) [right of=0,label={[above]$1$}]{};
  \node[main node] (2) [right of=1,label={[above]$2$}] {};
  \node[main node] (3) [right of=2,label={[above]$3$}] {};
  \node[main node] (4) [right of=3,label={[above]$4$}] {};
  \node[main node] (5) [right of=4,label={[above]$5$}] {};
  \node[main node] (6) [right of=5,label={[above]$6$}] {};
  \node[main node] (7) [right of=6,label={[above]$k$}] {};
  \node[main node] (8) [right of=7,label={[above]$k+1$}] {};
  \node[draw = none] (9) [right of=8,label={[above]}] {};

  \path[every node/.style={font=\sffamily\small}]
    (0) edge[color=black, ->]  (1)
    (1) edge[bend right, color=black, ->]  (2)
 (1) edge[bend left, color=black, <-]  (2)
    (3) edge[ color=black, ->]  (4)
    (4) edge[color=black, ->]  (5)
    (5) edge[color=black, ->]  (6)
    (7) edge[color=black, ->]  (8);

 \draw[dashed] (5) to (4);
 \draw[dashed] (6) to (5);
 \draw[dashed] (7) to (6);
 \draw[dashed] (9) to (8);

\end{tikzpicture}
\caption{Infinite path graphs with $ \G(a) $ being finite or infinite and $ a $ being transient.}
\label{Fig1inf}
\end{figure}
\end{example}

\end{document}